\def\tabb{\@tabacckludge}
\DeclareSymbolFont{cyrletters}{OT2}{wncyr}{m}{n}
\DeclareMathSymbol{\Sha}{\mathalpha}{cyrletters}{"58}
\DeclareMathSymbol{\Che}{\mathalpha}{cyrletters}{"51}
\newcommand{\calMor}{\mathscr{M}\mathit{or}}
\newcommand{\Ga}{{\mathbf{G}}_{\rm{a}}}
\newcommand{\Gm}{{\mathbf{G}}_{\rm{m}}}
\DeclareMathOperator{\Pic}{Pic}
\DeclareMathOperator{\ab}{\rm{ab}}
\DeclareMathOperator{\Ext}{Ext}
\DeclareMathOperator{\Hom}{Hom}
\DeclareMathOperator{\R}{R}
\DeclareMathOperator{\uni}{uni}
\DeclareMathOperator{\coker}{coker}
\DeclareMathOperator{\perf}{perf}
\DeclareMathOperator{\im}{im}
\newcommand*{\F}{\ensuremath{\mathbf{F}}}                        
\newcommand*{\A}{\ensuremath{\mathbf{A}}}                        
\renewcommand*{\P}{\ensuremath{\mathbf{P}}}                        
\numberwithin{equation}{section}
\newtheorem{theorem}{Theorem}[section]
\newtheorem{lemma}[theorem]{Lemma}
\newtheorem{proposition}[theorem]{Proposition}
\newtheorem{corollary}[theorem]{Corollary}
\theoremstyle{definition}
  \newtheorem{definition}[theorem]{Definition}
\theoremstyle{remark}
  \newtheorem{remark}[theorem]{Remark}
\theoremstyle{definition}
  \newtheorem{example}[theorem]{Example}
\theoremstyle{remark}
\tikzset{commutative diagrams/.cd,
mysymbol/.style={start anchor=center,end anchor=center,draw=none}
}
\title{\textbf{EXTENSIONS OF UNIRATIONAL GROUPS}}
\author{Zev Rosengarten \thanks{MSC 2020: 20G07, 20G15. \newline
Keywords: Linear Algebraic Groups, Function Fields, Rational Points, Cohomology.  \newline
While completing this work, the author was supported by Israel Science Foundation Grant No.\,2083/24.
}}
\date{}
\begin{document}
\maketitle

\begin{abstract}
We undertake a study of extensions of unirational algebraic groups. We prove that extensions of unirational groups are also unirational over fields of degree of imperfection $1$, but that this fails over every field of higher degree of imperfection, answering a question of Achet. We also initiate a study of those groups which admit filtrations with unirational graded pieces, and show that one may deduce unirationality of unipotent groups from unirationality of certain quotients.
\end{abstract}

\setcounter{tocdepth}{1}
\tableofcontents{}

\section{Introduction}

Recall that a finite type $K$-scheme $X$ is said to be {\em unirational} when there is a dominant rational map $\A^n \rightarrow X$ for some $n > 0$. By dominant, we mean in the sense that the schematic closure is all of $X$, so in particular $X$ must be geometrically integral. Unirational varieties are of interest for a variety of reasons, one of which is that they admit many (a Zariski dense set of) rational points, at least when $K$ is infinite. Unirational algebraic groups, in particular, have received attention from various authors; see, for instance, \cite[\S10.3]{neronmodels} (on the relationship between unirationality and the existence of N\'eron models for algebraic groups), \cite[\S11.3]{cgp} (largely on the relationship between unirationality and various arithmetic properties of algebraic groups), \cite{achetunir} (regarding geometric properties of unirational groups), \cite[\S6]{rostrans} (about the Picard groups of unirational groups), and \cite{rosrigidity} (concerned primarily with various rigidity properties exhibited by unirational groups, in some sense analogous to the well-known rigidity properties of abelian varieties). Over perfect fields, every connected linear algebraic groups is unirational \cite[Ch.\,V,Th.\,18.2(ii)]{borel}, but this is false over every imperfect field \cite[Ex.\,11.3.1]{cgp}, and so unirationality becomes a nontrivial condition on (connected) linear algebraic groups in the imperfect setting. If $G$ is a connected linear algebraic group, and $T \subset G$ is a maximal $K$-torus, then $G$ is unirational precisely when the unipotent group $Z_G(T)/T$ is, where $Z_G(T)$ denotes the centralizer of $T$. (Combine \cite[Prop.\,7.12]{rosrigidity} and Lemma \ref{extunitor} below.) For this reason (and others), unirationality questions for algebraic groups often reduce to the unipotent setting, and thus the theory of unipotent groups plays a central role in such problems.

In \cite[Question 4.9]{achetpicunip}, Achet posed the following question: Is a commutative extension of two unirational algebraic $K$-groups necessarily also unirational? His motivation lies in the study of the maximal unirational $K$-subgroup $G_{\mathrm{uni}}$ of an algebraic group. (This subgroup of $G$ may also be described as the group generated by all maps from unirational varieties into $G$ which pass through the identity.) Achet posed his question because an affirmative answer implies that, for commutative $G$, the group $G/G_{\mathrm{uni}}$ admits no nontrivial unirational subgroup. The question trivially admits an affirmative answer when $K$ is perfect, since all connected linear algebraic groups are unirational in this case, but over imperfect fields it becomes highly nontrivial.

In this paper, we answer Achet's question, and more generally undertake a study of extensions of unirational algebraic groups. In \S\ref{degimp1sec}, we prove that every extension of unirational groups is unirational over fields of degree of imperfection $1$ (Theorem \ref{extunirdegimp1}). This makes crucial use of the concept of permawound unipotent groups introduced in \cite{rospermawound}. We will remind the reader of the definition of permawoundness in \S\ref{degimp1sec}, but their utility arises not from their definition, but from the fact that they simultaneously exhibit two important properties: One the one hand, they enjoy a certain universality or ubiquity -- every commutative $p$-torsion wound unipotent group may be embedded into a permawound one -- so that many questions about arbitrary unipotent groups reduce to the permawound case. On the other hand, they exhibit an extraordinary rigidity: After passage to a suitable finite separable extension, every permawound group admits a filtration with successive quotients all isomorphic to one of two particular groups, very much analogously to the manner in which split unipotent groups always (by definition) admit a filtration with $\Ga$ quotients \cite[Th.\,1.4,1.5]{rospermawound}. One sign that they are relevant for our investigations here is that such groups are always unirational \cite[Th.\,1.9(i)]{rosrigidity}. In fact, we shall show that a smooth unipotent group over a field of degree of imperfection $1$ is unirational precisely when it is permawound (Proposition \ref{perm=unirdegimp1}), from which the stability of unirationality under extensions will follow easily.

However, this stability fails over every field of degree of imperfection $> 1$. In fact, in \S\ref{examplessec} we construct over every such field a commutative $p$-torsion extension of unirational wound unipotent groups which is not unirational (Example \ref{extwdbywdnotunirex}). We also construct a non-unirational commutative $p$-torsion wound unipotent extension of the additive group by a unirational wound unipotent group (Example \ref{unirnotinherexts}). Thus Achet's question has a negative answer over every field of degree of imperfection $>1$.

This naturally leads one to introduce a new class of algebraic groups: those which admit a filtration whose successive quotients are unirational. We call such groups {\em ext-unirational}, and in \S\ref{extunirsec} we show that ext-unirationality of a connected linear algebraic $G$ over a field of characteristic $p$ is equivalent to ext-unirationality of the maximal commutative $p$-torsion quotient $G^{\ab}/[p]G^{\ab}$ (Proposition \ref{extunirunipptorcom}), as well as to that of the centralizer $Z_G(T)$ of any torus $T$ of $G$ (Proposition \ref{Gextunir}).

Finally, in \S\ref{dedfromquotssec}, we continue with the theme of deducing unirationality properties from quotients, but for unirationality itself rather than ext-unirationality. More precisely, we prove that over a field $K$ of degree of imperfection $r > 1$, a smooth connected unipotent $K$-group $U$ is unirational if and only if the quotient $U/\mathscr{D}_rU$ of $U$ by the $r$th term in its lower central series is (Theorem \ref{unifrmquot}). Of particular note is the case $r \leq 2$, which -- in conjunction with Corollary \ref{uniriffabis} and the unirationality of all connected linear algebraic groups when $r = 0$ -- tells us that, when $K$ has degree of imperfection $\leq 2$, unirationality of $U$ is equivalent to that of its maximal commutative quotient $U^{\ab}$. We show that this is optimal by giving, over every field of degree of imperfection $\geq 3$, an example of a wound unipotent $K$-group $U$ such that $U^{\ab}$ is $p$-torsion and unirational even though $U$ fails to be unirational. (Example \ref{Uabunirex}).

\subsection*{Notation and Conventions} Throughout this paper, $K$ denotes a field, and when it appears, $p$ denotes a prime number equal to the characteristic of $K$. By a {\em linear algebraic $K$-group}, or a {\em linear algebraic group over $K$}, we shall mean a smooth affine $K$-group scheme. Recall that the {\em degree of imperfection} of a field $K$ of characteristic $p$ is defined to be $\mathrm{log}_p[K: K^p]$. This quantity is always either infinite or a nonnegative integer. The degree of imperfection of $K$ also equals $\mathrm{dim}_K(\Omega^1_{K/\F_p})$, as well as (tautologically, from the first definition) the size of any $p$-basis of $K$ \cite[Th.\,26.5]{matsumura}.

\subsection*{Acknowledgments}

I thank Anis Zidani for pointing out an error in an earlier version of Theorem \ref{unifrmquot}.

\section{Degree of imperfection $1$}
\label{degimp1sec}

In this section, we will prove that an arbitrary (not necessarily commutative) extension of unirational algebraic $K$-groups must be unirational when $K$ has degree of imperfection $1$. We begin by tying up a loose end concerning a basic permanence property of permawound unipotent groups, which were introduced in \cite{rospermawound}. Recall \cite[Def.\,1.2]{rospermawound} that a smooth unipotent $K$-group scheme $U$ is called {\em permawound} when, for any exact sequence of finite type $K$-group schemes $$U \longrightarrow E \longrightarrow \Ga \longrightarrow 1,$$ $E$ contains a copy of $\Ga$; that is, there is an injective (or equivalently, there is a nonzero) $K$-homomorphism $\Ga \rightarrow E$. As we shall show below (Proposition \ref{perm=unirdegimp1}), it is an easy consequence of the results of \cite{rospermawound} and \cite{rosrigidity} that permawoundness is equivalent to unirationality for unipotent groups over fields of degree of imperfection $1$. At least for unipotent groups, then, we will deduce the permanence of unirationality under extensions from the corresponding property of permawoundness, to be proved presently. The case of general algebraic groups will then be obtained by reduction to the unipotent case, using the fact, a consequence of \cite[Prop.\,7.12]{rosrigidity} and Lemma \ref{extunitor} below, that $G$ is unirational if and only if the same holds for $Z_G(T)/T$, where $T \subset G$ is a maximal torus.

\begin{proposition}
\label{permawdexts}
Given an exact sequence $$1 \longrightarrow U' \longrightarrow U \longrightarrow U'' \longrightarrow 1$$ of unipotent $K$-groups with $U', U''$ permawound, $U$ is also permawound.
\end{proposition}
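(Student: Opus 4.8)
The plan is a two-step dévissage on the extension $1 \to U' \to U \to U'' \to 1$: I would use permawoundness of the quotient $U''$ to manufacture a copy of $\Ga$ in a quotient of the test object $E$, and then use permawoundness of the subgroup $U'$ to lift that copy back into $E$ itself.

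First I would record that permawoundness passes to quotients, which is immediate from the definition: if $U \twoheadrightarrow \bar U$ and $\bar U \to E \to \Ga \to 1$ is exact, then precomposing with $U \twoheadrightarrow \bar U$ gives an exact sequence $U \to E \to \Ga \to 1$ with the same $E$, so $E$ contains $\Ga$. Since any quotient of $U$ is again an extension of a quotient of $U''$ by a quotient of $U'$, it suffices to treat test sequences in which the structural map $U \to E$ is injective: replacing $U$ by the image of that map keeps us inside the class of extensions under consideration. So I may assume $U$ is a normal subgroup of the (smooth unipotent) group $E$ with $E/U \cong \Ga$, and the goal becomes to produce $\Ga \hookrightarrow E$.

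The core of the argument is transparent when $U'$ happens to be normal in $E$. In that case $E/U'$ is a group fitting into
\[ 1 \longrightarrow U/U' \longrightarrow E/U' \longrightarrow \Ga \longrightarrow 1, \]
and since $U/U' = U''$ is permawound, $E/U'$ contains a copy of $\Ga$. Its preimage $P \subseteq E$ then sits in
\[ 1 \longrightarrow U' \longrightarrow P \longrightarrow \Ga \longrightarrow 1, \]
and permawoundness of $U'$ forces $\Ga \hookrightarrow P \subseteq E$, as wanted.

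The main obstacle — and the only real content — is that $U'$ need not be normal in $E$: the conjugation action of $E/U \cong \Ga$ on $U$ can move $U'$, and this action is genuinely wild over imperfect fields. The naive repairs fail, and it is instructive to see why: replacing $U'$ by its normal closure in $E$, or passing to the maximal central quotient $E/[E,U]$, both reduce the lifting step to proving permawoundness of a mere \emph{subgroup} of $U$ (rather than of $U'$ or a quotient of it), which is not among the hypotheses and can genuinely fail. Thus the crux is to reduce to the normal situation above while preserving permawoundness of \emph{both} the kernel and the quotient. I expect the right tool is not this explicit filtration at all, but a cohomological reformulation of permawoundness — roughly, that for every quotient $V$ of $U$ every class in $\Ext^1(\Ga, V)$ is annihilated after pullback along some nonzero isogeny $\Ga \to \Ga$ — for which closure under extensions is immediate: in the long exact sequence for $\Ext^1(\Ga, -)$ one kills a class first over the quotient and then over the subobject, at the cost of composing the two isogenies, and this is completely insensitive to whether the subobjects are normal in $E$. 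Establishing such a reformulation, together with the reduction from the non-commutative definition to this commutative criterion, is where I would expect to spend essentially all of the effort.
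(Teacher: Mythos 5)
Your proposal correctly isolates the obstruction (the subgroup $U'$ need not be normal in the test object $E$, so the naive two-step d\'evissage breaks down), but it does not overcome it: the entire burden is deferred to an unestablished ``cohomological reformulation'' of permawoundness, and that is exactly where the content of the proposition lies. Two concrete problems. First, the reformulation you sketch is not visibly equivalent to the definition: permawoundness quantifies over exact sequences $U \to E \to \Ga \to 1$ in which the first map need not be injective and $E$ need not be commutative, and it asks that $E$ contain a copy of $\Ga$ \emph{somewhere} (possibly inside the image of $U$), not that an extension class in $\Ext^1(\Ga,-)$ die after pullback along an isogeny of $\Ga$; relating the two conditions is nontrivial, and as stated the isogeny-pullback criterion is neither obviously necessary nor obviously sufficient. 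Second, the long-exact-sequence argument you invoke for closure under extensions only makes sense in a commutative (or at least central) setting, so the ``reduction from the non-commutative definition to this commutative criterion'' that you gesture at is precisely the hard step, not a footnote.

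The paper's proof takes a different route worth comparing with. After reducing (by dimension induction, connectedness, and a direct treatment of the non-wound case via the maximal split subgroup) to the case where $U$ is wound, it chooses a nontrivial smooth connected $V \subset U'$ central in $U$ and shows that the commutator map of $U$ descends to the maximal wound quotient $\overline{U}_w$ of $U/V$; since $\overline{U}_w$ is wound and permawound, it is commutative by the rigidity results of [Ros3], and commutator maps out of wound permawound groups into wound groups are constant, which forces $U$ itself to be \emph{commutative}. At that point the statement is not reproved from scratch but handed off to the already-developed theory of weakly permawound groups in [Ros2], where closure of weak permawoundness under extensions and the implication weakly permawound $\Rightarrow$ permawound are available. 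So the decisive idea is a commutativity/rigidity argument rather than a d\'evissage on the test sequence, and your proposal as written has a genuine gap where that idea should be.
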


\begin{proof}
If $K$ is perfect, then the assertion is trivially true by \cite[Prop.\,5.2(i)]{rospermawound}, so assume that $K$ is imperfect. Then $U'$ and $U''$ are connected \cite[Prop.\,6.2]{rospermawound}, hence so is $U$. We proceed by dimension induction. Note first that, since every quotient of $U$ is an extension of a quotient of $U''$ by a quotient of $U'$, and since permawoundness is (by definition) inherited by quotients, it follows that we may assume that every quotient of $U$ by a positive-dimensional normal $K$-subgroup is permawound. Consider first the case in which $U$ is not wound. Suppose that we have an exact sequence of finite type $K$-groups $$U \xlongrightarrow{f} E \longrightarrow \Ga \longrightarrow 1.$$ We must show that $E$ contains a copy of $\Ga$. If $f$ does not kill the maximal split $K$-subgroup $U_s$ of $U$, then this is immediate. Otherwise, $f(U)$ is a quotient of $U/U_s$, hence permawound, so $E$ contains a copy of $\Ga$ in this case as well. Hence $U$ is permawound.

Now suppose that $U$ is wound. If $U'$ is trivial, the assertion is immediate, so assume that $U' \neq 1$. Then we claim that there is a nontrivial smooth connected $K$-subgroup $V \subset U'$ that is central in $U$. Indeed, define a sequence of subgroups $V_n \subset U'$ by the following recursive formula: $V_0 := U'$, and $V_{n+1} := [U, V_n]$ for $n \geq 0$. Then one verifies by induction that the $V_n$ are normal subgroups of $U$, and that they form a descending sequence. Because $U$ is unipotent, it is nilpotent, hence $V_{n+1} = 0$ for some $n \geq 0$. If we choose $n$ to be minimal, then $V := V_n \subset U'$ is nontrivial and central in $U$, which proves the claim.

Let $\overline{U} := U/V$, let $\overline{U}_s \subset \overline{U}$ denote the maximal split subgroup, and let $\overline{U}_w := \overline{U}/\overline{U}_s$ denote the wound quotient, which is permawound because $V$ is nontrivial. Because $V \subset U$ is central, the commutator map $U \times U \rightarrow U$ descends to a map $c\colon \overline{U} \times \overline{U} \rightarrow U$. We claim that $c$ further descends to a map $\overline{U}_w \times \overline{U}_w \rightarrow U$. Indeed, let $u_1, u_2 \in \overline{U}(K_s)$. Then $c$ restricts to a map $u_1\overline{U}_s \times u_2\overline{U}_s \rightarrow U_{K_s}$ which must be constant because $\overline{U}_s^2$ is split while $U_{K_s}$ is wound. Because $\overline{U}^2(K_s)$ is Zariski dense in $\overline{U}^2$, it follows that $c$ is $\overline{U}_s$-invariant in both arguments, hence $c$ indeed descends to a map $\overline{U}_w \times \overline{U}_w \rightarrow U$ which we by abuse of notation also denote by $c$. Because $\overline{U}_w$ is wound and permawound, it is commutative \cite[Cor.\,10.4]{rosrigidity}, so $c$ is the constant map to $1 \in U(K)$ by \cite[Lem.\,10.2]{rosrigidity}. It follows that $U$ is commutative. Because commutative permawound groups are trivially weakly permawound \cite[Def.\,5.1]{rospermawound}, $U'$ and $U''$ are weakly permawound, hence so is $U$ \cite[Prop.\,5.6]{rospermawound}. It follows that $U$ is permawound \cite[Prop.\,6.9]{rospermawound}.
\end{proof}

Next we verify that, for fields of degree of imperfection $1$, unirationality for unipotent groups is the same as permawoundness.

\begin{proposition}
\label{perm=unirdegimp1}
Let $K$ be a field of degree of imperfection $1$. A unipotent $K$-group $U$ is unirational if and only if it is permawound.
\end{proposition}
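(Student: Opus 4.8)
The plan is to establish the two implications separately, in each case first reducing to the case of a wound group and then appealing to the structure theory of permawound unipotent groups over fields of degree of imperfection $1$. Write $U_s \subseteq U$ for the maximal split $K$-subgroup and $U_w := U/U_s$ for the wound quotient, so that $1 \to U_s \to U \to U_w \to 1$ is exact. Since $\Ga$ is permawound and, by Proposition \ref{permawdexts}, extensions of permawound groups by permawound groups are permawound, every split unipotent group is permawound; as permawoundness also passes to quotients, $U$ is permawound if and only if $U_w$ is. On the unirational side, the projection $U \to U_w$ is a torsor under the split group $U_s$ over the affine base $U_w$, hence trivial, so that $U \cong U_w \times \A^{\dim U_s}$ as $U_w$-schemes; consequently $U$ is unirational if and only if $U_w$ is. This reduces the proposition to the case in which $U$ is wound.

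For the implication permawound $\Rightarrow$ unirational, suppose $U$ is wound and permawound. Then $U$ is commutative by \cite[Cor.\,10.4]{rosrigidity}. Over a field of degree of imperfection $1$ the structure theory of \cite{rospermawound} identifies the commutative permawound groups as the quotients of powers of the model group $\R_{K^{1/p}/K}(\Ga)$. Since $[K : K^p] = p$, this Weil restriction is isomorphic to $\Ga^p$ and is therefore rational, so each of its powers is rational and every quotient of such a power is unirational. Hence $U$ is unirational.

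For the converse, suppose $U$ is wound and unirational; the goal is to show that $U$ is permawound. The plan is to realize $U$ as a quotient of a power of the same model group $\R_{K^{1/p}/K}(\Ga)$: over a field of degree of imperfection $1$ a wound unirational group is dominated by such a power, so that $U$ arises as a quotient of a split (hence permawound) group, and permawoundness then descends to $U$ because it is inherited by quotients. I expect this to be the main obstacle. Whereas permawound $\Rightarrow$ unirational flows directly once winding yields commutativity and the description via $\R_{K^{1/p}/K}(\Ga)$, the reverse requires showing that unirationality is strong enough to force this model-quotient structure, and this is precisely the step that exploits $[K : K^p] = p$ in an essential way; indeed the implication fails over fields of higher degree of imperfection, so the argument cannot avoid using that a single Weil-restriction model controls all wound groups when the degree of imperfection is $1$. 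An alternative route, paralleling the proof of Proposition \ref{permawdexts}, is to pass through weak permawoundness \cite[Def.\,5.1]{rospermawound}: one shows that a unirational group is weakly permawound and then promotes this to genuine permawoundness over degree of imperfection $1$ via \cite[Prop.\,6.9]{rospermawound}.
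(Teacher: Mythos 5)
Your reduction to the wound case is fine and matches the paper's (the paper reduces only the ``unirational $\Rightarrow$ permawound'' direction via the sequence $1 \to U_s \to U \to U_w \to 1$, then climbs back up using \cite[Prop.\,5.3]{rospermawound} and Proposition \ref{permawdexts}; your torsor-triviality argument for the other direction is also correct). But there is a genuine error at the heart of both of your wound-case arguments: the model group $\R_{K^{1/p}/K}(\Ga)$ is simply $\Ga^p$ as a $K$-group, so it and all of its powers are \emph{split}, and every quotient of a split unipotent group is split. Hence no nontrivial wound group is a quotient of (or dominated by) a power of $\R_{K^{1/p}/K}(\Ga)$, and the claimed ``structure theory'' identifying commutative permawound groups as such quotients cannot be what \cite{rospermawound} says. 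The correct model over a field of degree of imperfection $1$ is the wound group $\R_{K^{1/p}/K}(\Gm)/\Gm$ (the group $\mathscr{V}$ of \cite[Def.\,7.3]{rospermawound}, which is $V_{1,\lambda}$ for $\lambda$ a $p$-basis element), possibly together with $\R_{K^{1/p}/K}(\alpha_p)$; and what the rigidity theorem provides is a \emph{filtration} with such graded pieces, not a quotient presentation. Deducing unirationality from a filtration would require knowing that extensions of unirational groups are unirational, which is Theorem \ref{extunirdegimp1} --- whose proof depends on this very proposition --- so that route is circular as well as misstated.

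Beyond the wrong model group, the hard direction (wound unirational $\Rightarrow$ permawound) is not actually carried out: you correctly flag it as the main obstacle and sketch two possible plans, but neither is executed, and the first rests on the same false quotient claim. The paper disposes of both implications by direct citation: \cite[Th.\,1.9(i)]{rosrigidity} for ``permawound $\Rightarrow$ unirational'' and \cite[Prop.\,9.6]{rosrigidity} for the converse in the wound case. If you want a self-contained argument you would need to reprove those results, e.g.\ by showing that a wound unirational group over a field of degree of imperfection $1$ is generated by images of the groups $\R_{K(\alpha^{1/p^n})/K}(\Gm)$ (in the spirit of Lemma \ref{unirimpcertmaps}) and relating these to $\mathscr{V}$; your alternative suggestion via weak permawoundness and \cite[Prop.\,6.9]{rospermawound} is plausible in outline but is not a proof as written.
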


\begin{proof}
For the if direction, use \cite[Th.\,1.9(i)]{rosrigidity}. For the converse, \cite[Prop.\,9.6]{rosrigidity} yields the result when $U$ is wound. For arbitrary $U$, unirationality implies that $U$ is smooth and connected. Then we have an exact sequence $$1 \longrightarrow U_s \longrightarrow U \longrightarrow U_w \longrightarrow 1$$ with $U_s$ split and $U_w$ wound. Because $U$ is unirational, so is $U_w$, which is therefore permawound. Then \cite[Prop.\,5.3]{rospermawound} and Proposition \ref{permawdexts} show that $U$ is permawound.
\end{proof}

We require the following simple lemma.

\begin{lemma}
\label{extunitor}
Given an extension of finite type $K$-group schemes $$1 \longrightarrow T \longrightarrow G \longrightarrow U \longrightarrow 1$$ with $T$ a torus and $U$ a unirational $K$-group scheme, $G$ is also unirational.
\end{lemma}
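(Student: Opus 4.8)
The plan is to reduce the statement to the unirationality of a single torus-torsor over a rational base, and then to trivialize that torsor after a separable base change. First I would record that every torus is unirational: $T$ is a quotient of a quasi-trivial torus $\prod_i \R_{L_i/K}\Gm$, which is an open subvariety of an affine space, and a dominant image of a unirational variety is unirational. Writing $\pi\colon G \to U$ for the quotient map, the sequence exhibits $G$ as an extension of the smooth connected group $U$ (unirational groups being smooth and connected) by the smooth connected group $T$, so $G$ is itself smooth and connected. Using that $U$ is unirational, choose a dominant $K$-rational map $\phi\colon \A^n \dashrightarrow U$, defined on a dense open $V \subseteq \A^n$; after translating by a $K$-point of $U$ (these are dense) I may normalize so that $0 \in V$ and $\phi(0) = e$. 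Form the pullback $P := V \times_U G$, a torsor under $T$ over $V$. The second projection $P \to G$ has image $\pi^{-1}(\phi(V))$, which is dense in $G$ since $\pi$ is surjective and $\phi$ is dominant; hence it suffices to prove that $P$ is unirational. Note that $P$ carries the $K$-point $(0,e)$, over which the fibre of $\pi$ is the split torsor $T$.

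The heart of the matter is to show that this $T$-torsor $P$ over the rational variety $V$ is unirational. Let $L/K$ be a finite separable extension splitting $T$, with $\Gamma := \Gal(L/K)$. Since $V_L$ is an open subscheme of $\A^n_L = \Spec L[t_1,\dots,t_n]$, a unique factorization domain, one has $\Pic(V_L) = 0$; as $T_L$ is split this yields $H^1_{\et}(V_L, T_L) = \Pic(V_L)^{\dim T} = 0$, so $P_L$ is a trivial $T_L$-torsor and $P_L \cong V_L \times T_L$ is rational over $L$. It then remains to descend unirationality to $K$. The class of $P$ is computed by a $\Gamma$-descent datum valued in $T(\calO(V_L))$, i.e. in morphisms $V_L \to T_L$; such a morphism is a tuple of units of $\calO(V_L)$, whose only non-constant part is monomial in the finitely many equations of the divisors removed from $\A^n_L$. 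Evaluating the cocycle at the rational point $0$ identifies its constant part, modulo $T(L)$-coboundaries, with the trivial class of the fibre $P_0 \cong T$; after adjusting the trivialization accordingly one is left with an explicit monomial descent datum normalized to be trivial at $0$, and I would check that the resulting $K$-form of $V_L \times T_L$ is unirational over $K$.

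I expect this descent to be the main obstacle: one must verify that the twisted total space $P$, which is only a form of a rational variety and may carry a non-trivial torsor class at its generic point, is nonetheless unirational over $K$. The decisive leverage is that the structure group is a \emph{torus}: splitting it makes the torsor Zariski-trivial because $\Pic$ of an open in affine space vanishes, that is, Hilbert 90 is available. This is exactly the input with no analogue for unipotent kernels, which is why an extension by a torus is unirational unconditionally while the unipotent case is the subtle phenomenon studied in the rest of the paper. A perhaps cleaner route to the same step is to choose a quasi-trivial (hence special) torus $Q$ surjecting onto $T$ and to lift $P$ to a $Q$-torsor: being special, such a torsor is Zariski-locally trivial, hence unirational over the rational base $V$, and it surjects onto $P$; here the obstruction to the lift lives in a degree-two cohomology group of the kernel of $Q \to T$, which I would control using the rational point on $P$.
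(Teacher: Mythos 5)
Your reduction to trivializing the $T$-torsor $f^*(G)$ over a dense open $V \subseteq \A^n$, via $\Pic(V_L) = 0$ once $T$ is split, is exactly the paper's mechanism. But you have left the decisive step unproven: you trivialize the torsor only after passing to a finite separable splitting field $L$, and then say ``it remains to descend unirationality to $K$,'' which you identify as the main obstacle and do not resolve. This descent is genuinely the whole content of the lemma, and it does not follow from elementary manipulations: if $P_L$ is $L$-rational, composing an $L$-parametrization $\A^n_L \dashrightarrow P_L$ with $P_L \to P$ gives a dominant $K$-rational map from $\A^n_L$, but $\A^n_L$ is \emph{not} $K$-unirational for $L \neq K$ (its function field contains $L$, which cannot embed in a purely transcendental extension of $K$). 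Your proposed explicit cocycle computation is not carried out, and your alternative route via a special quasi-trivial cover $Q \twoheadrightarrow T$ leaves the lifting obstruction in degree-two cohomology of the kernel unaddressed; neither sketch closes the gap.

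The paper sidesteps all of this with one citation: unirationality of a (group) scheme is insensitive to separable extension of the ground field \cite[Th.\,1.6]{rosrigidity}, so one may assume from the outset that $K$ is separably closed. Then $T$ is already split over $K$, $f^*(G)$ is a trivial $T$-torsor over $V$ because $\Pic(V)=0$, hence $f^*(G) \cong V \times T$ is $K$-rational and maps dominantly to $G$. This is a nontrivial input special to algebraic groups (it is proved elsewhere, not in this paper), and it is precisely the ingredient your argument is missing; without it, or a completed substitute for the Galois descent, the proof is incomplete.
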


\begin{proof}
We may assume that $K$ is separably closed \cite[Th.\,1.6]{rosrigidity}. Let $f\colon X \rightarrow U$ be a dominant map from a dense open subscheme $X \subset \A^n$. Then $G$ is in particular a $T$-torsor over $U$, and $f^*(G)$ is a $T$-torsor over $X$, which is trivial because $T$ is split and $\Pic(X) = 0$. Thus $f^*(G)$ is rational, so the dominant map $f^*(G) \rightarrow G$ proves that $G$ is unirational.
\end{proof}

We come now to the main result of this section.

\begin{theorem}
\label{extunirdegimp1}
Let $K$ be a field of degree of imperfection $1$, and suppose given an extension $$1 \longrightarrow G' \longrightarrow G \xlongrightarrow{\pi} G'' \longrightarrow 1$$ of finite type $K$-group schemes. If $G'$ and $G''$ are unirational, then so is $G$.
\end{theorem}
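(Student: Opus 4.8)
The plan is to reduce to the case where $G$ is nilpotent, where the extension decomposes transparently into its torus and unipotent parts, and then to feed the unipotent pieces into the permanence results for permawoundness. First I would reduce to the case that $K$ is separably closed by \cite[Th.\,1.6]{rosrigidity}, recording that unirationality forces $G'$ and $G''$, hence $G$, to be smooth and connected. Then fix a maximal torus $T'$ of $G'$, extend it to a maximal torus $T$ of $G$, and set $T'' := \pi(T)$, a maximal torus of $G''$. Choosing a cocharacter $\lambda \in X_*(T)$ that is regular with respect to the finitely many weights of $T$ on $G$, $G'$ and $G''$ at once, I arrange that $Z_G(\lambda) = Z_G(T) =: C$, that $Z_{G'}(\lambda) = Z_{G'}(T)$, and that $Z_{G''}(\lambda) = Z_{G''}(T'') =: C''$.

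The first main step is to reduce unirationality of $G$ to that of its Cartan subgroup $C$, using the open cell attached to $\lambda$. The multiplication map $U_G(-\lambda) \times Z_G(\lambda) \times U_G(\lambda) \to G$ is an open immersion onto a dense open $\Omega \subseteq G$, and the attracting and repelling subgroups $U_G(\pm\lambda)$ are split unipotent, hence rational; so $\Omega$ is unirational as soon as $C = Z_G(\lambda)$ is. Since $\Omega$ is dense open in the connected group $G$ we have $G = \Omega \cdot \Omega$, and the dominant multiplication $\Omega \times \Omega \to G$ then shows that $G$ is unirational provided $C$ is. The very same open-cell description, applied now to the groups $G'$ and $G''$ that are already known to be unirational, exhibits $Z_{G'}(\lambda) = Z_{G'}(T)$ and $Z_{G''}(\lambda) = C''$ as direct factors of dense open subschemes of $G'$ resp. $G''$, whence both are unirational.

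It then remains to prove that $C = Z_G(T)$ is unirational. Here $C$ is smooth, connected and nilpotent with central maximal torus $T$, so $C/T$ is unipotent and \ref{extunitor} reduces us to showing that $C/T$ is unirational. Functoriality of the dynamic construction under the quotient $\pi$ gives a short exact sequence $1 \to Z_{G'}(T) \to C \to C'' \to 1$, with kernel $Z_G(\lambda) \cap G' = Z_{G'}(\lambda)$ and with surjectivity onto $C''$ coming from compatibility of $Z_{(-)}(\lambda)$ with $\pi$. Dividing by the central torus $T$ (and using $Z_{G'}(T) \cap T = T'$) produces a short exact sequence $1 \to Z_{G'}(T)/T' \to C/T \to C''/T'' \to 1$ of unipotent groups whose outer terms, being quotients of the unirational groups $Z_{G'}(T)$ and $C''$ found above, are unirational. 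This is exactly where the hypothesis on $K$ enters: by \ref{perm=unirdegimp1} the two outer terms are permawound, by \ref{permawdexts} their extension $C/T$ is permawound, and \ref{perm=unirdegimp1} again shows $C/T$ is unirational. Then \ref{extunitor} gives $C$ unirational, and the previous step gives $G$ unirational.

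The main obstacle is precisely the passage from a general $G$ to the nilpotent Cartan subgroup $C$, together with the assertion that centralizers of tori in unirational groups remain unirational. Over an imperfect field one cannot split off a $K$-rational unipotent radical nor reduce to the solvable case, so the indispensable geometric device is the open cell associated to a generic cocharacter: it simultaneously presents $G$ as generated by $C$ and two rational (split unipotent) subgroups, and presents the centralizers $Z_{G'}(T)$ and $Z_{G''}(T'')$ as direct factors of dense opens of the given unirational groups. Once this reduction is secured, the nilpotent case splits cleanly into a central torus and two unipotent quotients, and the degree-of-imperfection-$1$ hypothesis is invoked only through the equivalence of unirationality with permawoundness for unipotent groups and the stability of permawoundness under extensions.
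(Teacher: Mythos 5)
Your proposal is correct and follows essentially the same route as the paper: reduce to the Cartan subgroup $Z_G(T)$ for a maximal torus $T$, form the exact sequence of unipotent groups $1 \to Z_{G'}(T)/T' \to Z_G(T)/T \to Z_{G''}(T'')/T'' \to 1$, and conclude via the equivalence of unirationality with permawoundness in degree of imperfection $1$ together with the stability of permawoundness under extensions and Lemma \ref{extunitor}; the only cosmetic difference is that you re-derive the reduction ``$G$ unirational $\Leftrightarrow$ $Z_G(T)$ unirational'' from the open cell of a generic cocharacter, where the paper simply cites \cite[Prop.\,7.12]{rosrigidity} (the same open-cell device appears in the paper's proof of Proposition \ref{Gextunir}). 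The one step you omit is checking that $G'$ and $G''$ are actually \emph{affine}: the statement is for arbitrary finite type $K$-group schemes, and both the existence of the maximal tori you use and the dynamic/open-cell machinery require linear algebraic groups, so you need the paper's observation that a unirational group cannot have a nonzero abelian variety quotient.
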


\begin{proof}
First, unirational $K$-schemes are necessarily generically smooth and geometrically connected, so $G'$, $G''$ are smooth and connected. They are also linear algebraic, for if not, then -- at least over $\overline{K}$ -- they would admit nonzero abelian variety quotients, which would then be unirational. But a nonzero abelian variety is never unirational. (Any map from a nonempty open subscheme of $\P^1$ into an abelian variety $A$ extends over $\P^1$, and abelian varieties do not admit nonconstant maps from $\P^1$.) It follows that $G$ is also a connected linear algebraic group.

Choose a maximal $K$-torus $T \subset G$, and let $Z_G(T)$ denote the centralizer in $G$ of $T$ (and similarly with the other centralizers below). Let $T'' := \pi(T)$, a maximal $K$-torus of $G''$ \cite[Ch.\,IV, Prop.\,11.14]{borel}, and let $T' := T \cap G'$, a maximal $K$-torus of $G'$ \cite[Cor.\,A.2.7]{cgp}. Then we have an exact sequence 
\begin{equation}
\label{extunirdegimp1pfeqn1}
1 \longrightarrow Z_{G'}(T')/T' \longrightarrow Z_G(T)/T \longrightarrow Z_{G''}(T'')/T'' \longrightarrow 1,
\end{equation}
with exactness on the right using \cite[Ch.\,IV,11.14,Cor.\,2]{borel}. The groups $Z_{G''}(T'')$ and $Z_{G'}(T')$ are unirational \cite[Prop\,7.12]{rosrigidity}, hence so are the two groups on the end in (\ref{extunirdegimp1pfeqn1}). They are also unipotent, the unipotence a consequence of the maximality of the respective tori combined with \cite[Ch.\,IV,11.5,Cor.\,2]{borel}. Now an extension of unirational unipotent groups over a field of degree of imperfection $1$ is still unirational by combining Propositions \ref{perm=unirdegimp1} and \ref{permawdexts}. Thus $Z_G(T)/T$ is unirational, hence so is $Z_G(T)$ (Lemma \ref{extunitor}), hence so is $G$ \cite[Prop.\,7.12]{rosrigidity}.
\end{proof}

\section{Examples of extensions of unirational groups that are not unirational}
\label{examplessec}

In this section we will show by example -- over every field of degree of imperfection $>1$ -- that unirationality is not inherited by extensions in general. Our examples will be of two types: Our first example will be an extension of a wound unirational group by a wound unirational group, while our second will be an extension of $\Ga$ by a wound unirational group. Of course, any extension of a unirational group by a split unipotent one is unirational, as ${\rm{H}}^1(X, \Ga) = 0$ for every affine scheme $X$. Our examples will all be commutative and $p$-torsion, hence they provide a negative answer to Achet's question over fields of degree of imperfection $>1$. The proofs that these examples work will require various auxiliary lemmas, to which we now turn.

For $\alpha \in K$ and $n \geq 1$, we define the $K$-group
\begin{equation}
\label{eqnforVnalpha}
V_{n,\alpha} := \left\{-S + \alpha^{p-1}S^p + \sum_{\substack{0 \leq j < p^n \\ j \not\equiv -1 \pmod{p}}} \alpha^jS_j^{p^n} = 0\right\} \subset \Ga^{p^n-p^{n-1}+1}.
\end{equation}
The next lemma gives a different description of this group. Before stating it, let us recall some basic notions regarding $p$-polynomials. An element of $K[X_1, \dots, X_n]$ is called a {\em $p$-polynomial} when it is a sum of terms of the form $cX_i^{p^r}$ with $c \in K$ and $r \geq 0$. The sum $P$ of the leading terms in each variable $X_i$ is called the {\em principal part} of $F$, and we say that $F$ is {\em reduced} when $P$ has no nontrivial zeroes over $K$: If $P(x_1, \dots, x_n) = 0$ with $x_i \in K$, then $x_i = 0$ for all $i$. We say that $F$ is {\em universal} when the homomorphism that it induces $K^n \rightarrow K$ is surjective. 

\begin{lemma}
\label{descV_nalpha}
If $\alpha \in K-K^p$, then $$V_{n,\alpha} \simeq \R_{K(\alpha^{1/p^{n-1}})/K}\left[\R_{K(\alpha^{1/p^n})/{K(\alpha^{1/p^{n-1}})}}(\Gm)/\Gm\right],$$ where we regard $\Gm$ as sitting inside $\R_{K(\alpha^{1/p^n})/{K(\alpha^{1/p^{n-1}})}}(\Gm)$ in the natural manner, via the inclusion $A^{\times} \subset (A \otimes_{K(\alpha^{1/p^{n-1}})} K(\alpha^{1/p^n}))^{\times}$ for every $K(\alpha^{1/p^{n-1}})$-algebra $A$.
\end{lemma}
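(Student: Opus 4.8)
The plan is to split the tower $K \subseteq B \subseteq L$ through the intermediate field and build the isomorphism one layer at a time. Set $B := K(\alpha^{1/p^{n-1}})$ and $L := K(\alpha^{1/p^n})$, and write $\theta := \alpha^{1/p^n}$ and $\gamma := \theta^p = \alpha^{1/p^{n-1}}$, so that $L = B[\theta]/(\theta^p - \gamma)$ is purely inseparable of degree $p$ over $B$ while $B$ is purely inseparable of degree $p^{n-1}$ over $K$ (and $\gamma \in B - B^p$ because $[L:B] = p$). Since $\R_{B/K}$ is a functor, it suffices to prove two things: (i) an isomorphism \emph{over $B$} between $\R_{L/B}(\Gm)/\Gm$ and the $n=1$ hypersurface $V_{1,\gamma} \subset \Ga^p$; and (ii) that applying $\R_{B/K}$ to $V_{1,\gamma}$ returns $V_{n,\alpha}$. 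Phrasing things this way lets me apply $\R_{B/K}$ directly to the isomorphism of (i), so that I never have to analyze how Weil restriction interacts with the quotient by $\Gm$.

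First I would treat (i) with a logarithmic derivative. Let $D := d/d\theta$ be the $B$-linear derivation of $L$ with $D\theta = 1$; it is well defined since $D(\theta^p - \gamma) = p\theta^{p-1} = 0$, and $u \mapsto D(u)/u$ defines a homomorphism $\R_{L/B}(\Gm) \to \R_{L/B}(\Ga)$ whose kernel is the subgroup of units with $D(u) = 0$, namely the constants $\Gm$. Because the falling factorial $\prod_{k=0}^{p-1}(i-k)$ annihilating $\theta^i$ is a product of $p$ consecutive integers, $D^p = 0$; hence Jacobson's identity $D^p(u)/u = (D(u)/u)^p + D^{p-1}(D(u)/u)$ forces the image into the additive subgroup $\{w : w^p + D^{p-1}(w) = 0\}$. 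Writing $w = \sum_{i=0}^{p-1} w_i\theta^i$ and using $\theta^{ip} = \gamma^i$ together with Wilson's congruence $(p-1)! \equiv -1$, one gets $w^p = \sum_i \gamma^i w_i^p$ and $D^{p-1}(w) = -w_{p-1}$, so this subgroup is precisely $V_{1,\gamma}$, with $S = w_{p-1}$ and $S_j = w_j$. As $V_{1,\gamma}$ is a smooth connected group of dimension $p-1$ (its defining $p$-polynomial is reduced because $\gamma \notin B^p$), the induced monomorphism from the smooth connected $(p-1)$-dimensional $\R_{L/B}(\Gm)/\Gm$ must be an isomorphism onto it.

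Next I would compute (ii) from the functor of points $\R_{B/K}(V_{1,\gamma})(R) = V_{1,\gamma}(R \otimes_K B)$, using the power basis $1, \gamma, \dots, \gamma^{p^{n-1}-1}$ of $B/K$ (with $\gamma^{p^{n-1}} = \alpha$). Expanding each coordinate as $w_i = \sum_l w_{i,l}\gamma^l$ and reducing the single relation of $V_{1,\gamma}$ modulo $\gamma^{p^{n-1}} - \alpha$ splits it into $p^{n-1}$ scalar equations, one per basis vector. The $p^n - p^{n-1}$ coordinates $w_{i,l}$ with $0 \le i \le p-2$ — equivalently those indexed by $j = pl + i \not\equiv -1 \pmod p$ — are free, and I would set $S_j := w_{i,l}$; the coordinates $w_{p-1,t}$ are then determined, and I would take $S := w_{p-1,\,p^{n-1}-1}$. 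The component equation defining $S$ contains the self-term $\alpha^{p-1}S^p$, and recursively substituting the solved values of the remaining $w_{p-1,t}$ down the tower expresses everything in the $S_j$: each substitution applies one more Frobenius and appends one more base-$p$ digit to the exponent of $\alpha$, so that the free variable indexed by $j$ enters exactly as $\alpha^j S_j^{p^n}$. Collecting terms should yield precisely $-S + \alpha^{p-1}S^p + \sum_{j \not\equiv -1} \alpha^j S_j^{p^n} = 0$, which is (\ref{eqnforVnalpha}).

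The hard part is the bookkeeping of this elimination: checking that the telescoping produces the \emph{uniform} exponent $p^n$ on every $S_j$, the coefficient $\alpha^{p-1}$ on $S^p$, and the coefficients $\alpha^j$ on exactly the index set $j \not\equiv -1 \pmod p$. The mechanism is already transparent for $n = 2$, where the components read $w_{p-1,t} = \sum_l \alpha^l w_{t,l}^p$; feeding the free expressions ($t \le p-2$) into the $t = p-1$ equation gives $S = \alpha^{p-1}S^p + \sum_{l \le p-2,\, m} \alpha^{l+pm} w_{l,m}^{p^2}$ with $j = pm + l$, which is exactly $V_{2,\alpha}$. The general case is the evident induction on $n$, and I expect the only delicate point to be indexing the determined coordinates $w_{p-1,t}$ by the $p$-adic digits of $t$ cleanly enough that the exponents of $\alpha$ and the compounded Frobenius powers can be read off at each level.
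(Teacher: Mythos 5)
Your proposal is correct, but it takes a genuinely different route from the paper. The paper also begins by exhibiting, after an explicit change of variables over $K_{n-1}:=K(\alpha^{1/p^{n-1}})$, a copy of the Oesterl\'e equation for $\R_{K_n/K_{n-1}}(\Gm)/\Gm$ inside $(V_{n,\alpha})_{K_{n-1}}$; but from there it only extracts a \emph{nonzero} $K$-homomorphism $g\colon V_{n,\alpha}\to \R_{K_{n-1}/K}\bigl[\R_{K_n/K_{n-1}}(\Gm)/\Gm\bigr]$ via the adjunction of Weil restriction, and proves $g$ is an isomorphism indirectly: surjectivity because $V_{n,\alpha}$ is permawound hence unirational while the target is a minimal unirational group in Achet's sense, and injectivity because a finite nontrivial kernel would contradict weak permawoundness. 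Your argument replaces all of that machinery with two explicit computations: the logarithmic-derivative identification of $\R_{L/B}(\Gm)/\Gm$ with $V_{1,\gamma}$ over the intermediate field (which is essentially a proof of the Oesterl\'e result the paper cites, and your Jacobson/Wilson bookkeeping there is correct, as is the conclusion via monomorphism plus equal dimension of smooth connected groups), followed by a direct coordinate computation of $\R_{B/K}(V_{1,\gamma})$. This buys a completely explicit isomorphism and independence from the permawound/unirationality apparatus, at the cost of the elimination bookkeeping you flag as the hard part. That bookkeeping is real but genuinely routine: rather than expanding over the full basis $1,\gamma,\dots,\gamma^{p^{n-1}-1}$ at once, use transitivity of Weil restriction, $\R_{B/K}=\R_{K(\alpha^{1/p})/K}\circ\R_{B/K(\alpha^{1/p})}$, so that the inductive step is a single-layer computation of $\R_{K(\alpha^{1/p})/K}(V_{n-1,\alpha^{1/p}})\simeq V_{n,\alpha}$ identical in shape to your $n=2$ case; the index $j+mp^{n-1}$ then visibly runs over exactly the residues $\not\equiv -1 \pmod p$ with the uniform exponent $p^n$, and the eliminations are invertible triangular $p$-polynomial substitutions, hence group-scheme isomorphisms. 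With that reorganization your sketch closes into a complete proof.
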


Note in particular that $V_{n,\alpha}$ is unirational. This also may be seen as folllows: $V_{n,\alpha}$ is defined over the field $\F_p(\alpha)$, so we may verify its unirationality over this field, which has degree of imperfection $1$. But over $\F_p(\alpha)$, $V_{n,\alpha}$ is permawound \cite[Props.\,6.4,6.9,3.5]{rospermawound}, hence unirational \cite[Th.\,1.9(i)]{rosrigidity}.

\begin{proof}
We first note that all constructions above are valid over $\F_p(\alpha) \subset K$, and commute with arbitrary separable extension on $K$. Thus we may assume that $K = \F_p(\alpha)$, and in particular that $K$ has degree of imperfection $1$. For ease of notation, let us denote $V_{n,\alpha}$ by $V$, and $K(\alpha^{1/p^n}) = K^{1/p^n}$ by $K_n$. We claim that one has an isomorphism of $K_{n-1}$-groups 
\begin{equation}
\label{descV_nalphapfeqn1}
V_{K_{n-1}} \simeq \Ga^{(p-1)(p^{n-1}-1)} \times W, 
\end{equation}
where
\begin{equation}
\label{descV_nalphapfeqn13}
W \simeq \left\{-Y + \alpha^{p-1}Y^p + \sum_{0 \leq j < p-1} \alpha^jY_j^{p^n} = 0\right\} \subset \Ga^p.
\end{equation}
Indeed, over $K_{n-1}$ one may apply the invertible change of variables which fixes $S$ and $S_j$ for $i > p-1$, and such that $$S_j \mapsto S_j - \sum_{\substack{j < i < p^n\\i \equiv j \pmod{p}}} \alpha^{(i-j)/p^n}S_i, \hspace{.3 in} 0 \leq j < p-1.$$ This has the effect of replacing the equation for $V$ to that of $W$ (after renaming the $S$ variables as $Y$ variables), and one has a copy of $\Ga$ for each of the additional variables $S_j$, $j > p-1$, not appearing in the equation, which proves (\ref{descV_nalphapfeqn1}).

We further massage the equation for $W$ by making the invertible change of variables over $K_{n-1}$ which fixes $Y_j$ and does the following to $Y$:
\[
Y \mapsto \alpha^{\frac{1-p^{n-1}}{p^{n-1}}}Y - \sum_{\substack{0 \leq j < p-1\\1 \leq i < n}} \alpha^{\frac{j+1-p^{n-i}}{p^{n-i}}}Y_j^{p^i}.
\]
One verifies that this has the effect (after multiplying through by $\alpha^{\frac{p^{n-1}-1}{p^{n-1}}}$) of replacing the equation (\ref{descV_nalphapfeqn13}) for $W$ with the following equation:
\begin{equation}
\label{descV_nalphapfeqn12}
W \simeq \left\{-Y + \alpha^{\frac{p-1}{p^{n-1}}}Y^p + \sum_{0 \leq j < p-1}\alpha^{\frac{j}{p^{n-1}}}Y_j^p = 0\right\}.
\end{equation}
By \cite[Ch.\,VI,Prop.\,5.3]{oesterle}, this last equation describes the $K_{n-1}$-group $\R_{K_n/K_{n-1}}(\Gm)/\Gm$, so via (\ref{descV_nalphapfeqn1}), we obtain a nonzero $K_{n-1}$-homomorphism $V_{K_{n-1}} \rightarrow \R_{K_n/K_{n-1}}(\Gm)/\Gm$, whence a nonzero homomorphism $g\colon V \rightarrow \R_{K_{n-1}/K}(\R_{K_n/K_{n-1}}(\Gm)/\Gm)$  {\em over $K$}. It remains to prove that this (and indeed, any such) homomorphism is an isomorphism. The principal part of the polynomial (\ref{eqnforVnalpha}) defining $V$ is reduced and universal over $F$ (for instance, using \cite[Prop.\,3.5(ii)]{rospermawound}), so $V$ is permawound over $F$ \cite[Props.\,6.4,6.9]{rospermawound}, hence unirational \cite[Th.\,1.9(i)]{rosrigidity}. Thus $\im(g) \subset \R_{K_{n-1}/K}(\R_{K_n/K_{n-1}}(\Gm)/\Gm)$ is a nonzero unirational $K$-subgroup. But the latter group is a minimal unirational $K$-group in the sense that its only nonzero unirational $K$-subgroup is itself \cite[Props.\,2.18,2.20]{achetunir}, hence $g$ is surjective. To prove that $g$ is injective, we note that dimension considerations imply that $\ker(g)$ is finite. Because $V/\ker(g) \simeq \R_{K_{n-1}/K}(\R_{K_n/K_{n-1}}(\Gm)/\Gm)$ is wound unipotent, $\ker(g)$ is weakly permawound \cite[Prop.\,5.5]{rospermawound}, hence has strictly positive dimension if nontrivial (for instance, by \cite[Th.\,9.5]{rospermawound}; recall that weak permawoundness is insensitive to algebraic separable extension of the ground field \cite[Prop.\,6.7]{rospermawound}). Thus $\ker(g) = 0$, so $g$ is an isomorphism.
\end{proof}

\begin{lemma}
\label{unirimpcertmaps}
Suppose given a separably closed field $K$ and a commutative $p$-torsion unirational wound unipotent $K$-group scheme $U$. Then $U$ is generated by maps of the form $V_{n,\alpha} \rightarrow U$ with $n \geq 1$ and $\alpha \in K - K^p$.
\end{lemma}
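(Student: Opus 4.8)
The plan is to let $U' \subseteq U$ denote the $K$-subgroup generated by the images of all $K$-homomorphisms $V_{n,\alpha} \to U$ (with $n \geq 1$ and $\alpha \in K - K^p$) and to prove $U' = U$ by induction on $\dim U$. The subgroup $U'$ is smooth, connected, and unirational, since it is generated by unirational subgroups and the multiplication map from a suitable finite product of such images is dominant onto $U'$; being a subgroup of $U$, it is also commutative, $p$-torsion, and wound. The base case $\dim U = 0$ (equivalently $U = 1$) is vacuous.

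First I would reduce to a one-parameter situation. By the definition of unirationality and since $K$ is separably closed, there is a dominant morphism $f \colon X \to U$ from a dense open $X \subseteq \A^m$; after translating I may assume $f(x_0) = 0$ for a fixed basepoint $x_0 \in X(K)$. Joining $x_0$ to a variable point $x \in X(K)$ by a line $\ell \cong \A^1$ and restricting $f$ to the dense open $\ell \cap X$ produces a morphism $\psi \colon C \to U$ from an open $C \subseteq \A^1$ whose image contains both $0$ and $f(x)$. As $x$ ranges over the dense set $X(K)$ these points are Zariski dense, so the subgroups $H_\psi := \langle \psi(C) \rangle$ generate $U$. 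Each $H_\psi$ is again commutative, $p$-torsion, unirational, and wound. If $\dim H_\psi < \dim U$ for every such $\psi$, then the inductive hypothesis applied to each $H_\psi$ shows $H_\psi \subseteq U'$, whence $U = U'$ and we are done.

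This leaves the crucial case in which a single curve generates, that is, $U = \langle \psi(C) \rangle$ for one morphism $\psi \colon C \to U$. Here I would embed $U \hookrightarrow \Ga^N$ as the zero locus of a system of reduced $p$-polynomials and write $\psi = (\psi_1, \dots, \psi_N)$ with $\psi_i \in K[t]$. The goal is to produce finitely many homomorphisms $V_{n_i, \alpha_i} \to U$ whose images together contain $\psi(C)$, hence generate $U \subseteq U'$. To build them I would extract from the finitely many coefficients of the $\psi_i$ the relevant elements $\alpha \in K - K^p$ and integers $n$ governing the inseparability, and then construct the homomorphisms by the explicit $p$-polynomial change of variables used in the proof of Lemma \ref{descV_nalpha}, matching the defining equation (\ref{eqnforVnalpha}) of $V_{n,\alpha}$ against the relations satisfied by $\psi$. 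The minimality of $V_{n,\alpha}$ among nonzero unirational $K$-groups (\cite[Props.\,2.18,2.20]{achetunir}) forces any nonzero such homomorphism to have large unirational image, while the rigidity of wound unipotent groups (\cite{rosrigidity}) rules out the degenerate possibilities.

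The hard part will be precisely this single-curve case: realizing an arbitrary one-parameter family $\psi$ through explicitly presented groups $V_{n,\alpha}$. The reductions in the first two paragraphs are formal, but pinning down the correct exponents $n$ and parameters $\alpha$ and actually exhibiting the homomorphisms requires careful bookkeeping of the inseparable exponents appearing in the $\psi_i$, together with the $p$-independence of a $p$-basis of $K$ to separate their contributions. This is where the concrete computation, modeled on the change of variables in Lemma \ref{descV_nalpha}, must be carried out in full.
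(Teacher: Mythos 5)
There is a genuine gap, and it sits exactly where you acknowledge the ``hard part'' lies: the single-curve case. Your reductions to maps from open subschemes of $\A^1$ (or $\P^1$) are fine and agree with the start of the paper's argument, but from that point on you have no mechanism for actually producing homomorphisms $V_{n,\alpha}\to U$ whose images capture $\psi(C)$. Writing $\psi$ in coordinates as $p$-polynomial expressions in $t$ and ``extracting the relevant $\alpha$ and $n$ from the coefficients'' is not a construction: the change of variables in the proof of Lemma \ref{descV_nalpha} rewrites the defining equation of $V_{n,\alpha}$ over extension fields of $K$; it does not tell you how to factor an arbitrary morphism $C\to U$ through some $V_{n,\alpha}$, and the minimality of $V_{n,\alpha}$ and rigidity of wound groups only constrain homomorphisms you have already built. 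The missing idea is the universal property of the generalized Jacobian: a pointed morphism from an open subscheme $X\subset\P^1$ (with a rational point over the identity) into a wound unipotent group factors through a \emph{homomorphism} $\R_{D/K}(\Gm)\to U$ for a finite closed subscheme $D\subset\P^1$ \cite[\S 10.3, Thm.\,2]{neronmodels}. That single factorization replaces all of your proposed bookkeeping.

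Once you have it, the rest is the paper's route: woundness of $U$ lets you take $D$ reduced; since $K$ is separably closed, $D$ breaks into points with primitive purely inseparable residue fields $K(\alpha^{1/p^n})$, so $U$ is generated by homomorphisms from the groups $\R_{K(\alpha^{1/p^n})/K}(\Gm)$; the $p$-torsion hypothesis forces these to kill the image of multiplication by $p$, namely $\R_{K(\alpha^{1/p^{n-1}})/K}(\Gm)$, and the resulting quotient is identified with $V_{n,\alpha}$ via \cite[Cor.\,A.5.4(3)]{cgp} and Lemma \ref{descV_nalpha}. Note also that your outer induction on $\dim U$ buys you nothing: the case $U=\langle\psi(C)\rangle$ for a single curve is not a degenerate boundary case but the generic one, and it must be handled by the factorization argument in full, after which the induction is unnecessary.
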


\begin{proof}
Because $U$ is unirational, it is generated by maps from nonempty open subschemes of $\P^1$ with a rational point lying above the identity. But any such map $X \rightarrow U$ factors through a homomorphism $\R_{D/K}(\Gm) \rightarrow U$ for some finite closed subscheme $D \subset \P^1$ by \cite[\S 10.3, Thm.\,2]{neronmodels}. (For a more detailed explanation of how this follows from that theorem, see the discussion before Theorem 2.2 in \cite[\S2]{rosmodulispaces}.) Because $U$ is wound, $D$ may be taken to be reduced by \cite[Lem.\,2.3]{rosmodulispaces}. Write $D = \coprod \{x_i\}$ with the $x_i$ (reduced) closed points in $\P^1$. Then $\R_{D/K}(\Gm) \simeq \prod_i \R_{K(x_i)/K}(\Gm)$. Hence $U$ is generated by maps from $\R_{K(x)/K}(\Gm)$ as $x$ varies over closed points of $\P^1$. Because $K$ is separably closed, $K(x)/K$ is a (finite) purely inseparable extension, and it is a primitive extension (generated by a single element) by \cite[Lem.\,5.4]{rosrigidity}. We therefore conclude that $U$ is generated by maps from $\R_{L/K}(\Gm)$ as $L$ varies over primitive nontrivial purely inseparable extensions of $K$. But any such extension is of the form $K(\alpha^{1/p^n})$ for some $\alpha \in K-K^p$ and $n \geq 1$. The multiplication by $p$ map on $\R_{K(\alpha^{1/p^n})/K}(\Gm)$ has image $\R_{K(\alpha^{1/p^{n-1}})/K}(\Gm)$, as may, for instance, be checked on $\F_p(\alpha)_s$-points, since both groups are smooth and defined over $\F_p(\alpha)$. Thus, because $U$ is $p$-torsion, it is generated by maps from groups of the form $$\R_{K(\alpha^{1/p^n})/K}(\Gm)/\R_{K(\alpha^{1/p^{n-1}})/K}(\Gm) \simeq \R_{K(\alpha^{1/p^{n-1}})/K}\left[\R_{K(\alpha^{1/p^n})/{K(\alpha^{1/p^{n-1}})}}(\Gm)/\Gm\right],$$ the last equality by \cite[Cor.\,A.5.4(3)]{cgp}. Now apply Lemma \ref{descV_nalpha}.
\end{proof}

\begin{lemma}
\label{mapVnalphtoVaalph}
Let $\alpha, \lambda \in K - K^p$, and let $f\colon V_{n,\alpha} \rightarrow V_{1,\lambda} \subset \Ga^p$ be a nonzero $K$-group homomorphism. Then $K^p(\alpha) = K^p(\lambda)$, and $f$ is $($in each coordinate$)$ homogeneous linear in $S$, and homogeneous of degree $p^{n-1}$ in the $S_j$. Further, using the variables $X, X_i$ on $V_{1,\lambda}$ $($instead of $S,S_j$$)$, the $X$ coordinate is of the form $aS$ with $0 \neq a \in K$.
\end{lemma}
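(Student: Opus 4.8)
The plan is to work directly with the defining equations rather than the Weil-restriction descriptions. Since $V_{n,\alpha}$ and $V_{1,\lambda}$ are commutative and $p$-torsion, $f$ is exactly a tuple of additive (i.e.\ $p$-)polynomials $f_X, f_0,\dots,f_{p-2}$ in the coordinates $S, S_j$ such that $-f_X+\lambda^{p-1}f_X^p+\sum_i \lambda^i f_i^p$ vanishes on $V_{n,\alpha}$. First I would fix a normal form: on $V_{n,\alpha}$ one has $\alpha^{p-1}S^p=S-\sum_j\alpha^j S_j^{p^n}$, so every $S^{p^r}$ with $r\ge 1$ rewrites as a $K$-linear combination of $S$ and the $S_j^{p^m}$; since moreover the $S_j$ are algebraically independent while $S$ satisfies an irreducible degree-$p$ equation over $K(S_j)$, the set $\{S\}\cup\{S_j^{p^m}\}_{m\ge 0}$ is a $K$-basis of $\Hom(V_{n,\alpha},\Ga)$. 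Writing $f_X=aS+\sum_{j,m}b_{j,m}S_j^{p^m}$ and $f_i=a_iS+\sum_{j,m}b_{i,j,m}S_j^{p^m}$ and expanding the relation in this basis yields one scalar equation from the $S$-coefficient, namely $\alpha a=\lambda^{p-1}a^p+\sum_i\lambda^i a_i^p$, and for each $(j,m)$ a recursion expressing $b_{j,m}$ through the level-$(m-1)$ coefficients of all of $f_X,f_0,\dots,f_{p-2}$ together with a source term $-[m=n]\alpha^j a$.

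Next I would use two facts: $1,\lambda,\dots,\lambda^{p-1}$ are $K^p$-linearly independent (as $\lambda\notin K^p$), and each $f_\bullet$ is a genuine \emph{finite} polynomial. Reading the recursion from the top Frobenius-degree downward, finiteness forces it to terminate, and the $K^p$-independence then confines all nonzero Frobenius-degrees of the $f_\bullet$ to $\{p^{n-2},p^{n-1}\}$, with $f_X$ moreover carrying no $S_j$-term of degree below $p^{n-1}$; along the way one extracts, for every admissible $j$, the identity $\lambda^{p-1}b_{j,n-1}^p+\sum_{i=0}^{p-2}\lambda^i b_{i,j,n-1}^p=\alpha^j a$. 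This already gives $\alpha^j a\in K^p(\lambda)$ for all such $j$; taking $j=0$ gives $a\in K^p(\lambda)$, and then either $j=1$ (when $p>2$) or the $S$-coefficient equation $\alpha a=\lambda a^2+a_0^2$ (when $p=2$) gives $\alpha\in K^p(\lambda)$, so $K^p(\alpha)=K^p(\lambda)$ by a degree count, once $a\ne 0$. That $a\ne 0$ follows because $a=0$ would force the top-degree coefficients, hence (via the same equations) all coefficients, to vanish, contradicting $f\ne 0$.

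The hard part is to upgrade ``$f_X$ has no $S_j$-term below degree $p^{n-1}$'' to ``$f_X=aS$'', i.e.\ to kill $\beta_j:=b_{j,n-1}$. Let $T$ be the $K^p$-linear functional on $K^p(\lambda)$ picking out the $\lambda^{p-1}$-coordinate in the basis $1,\lambda,\dots,\lambda^{p-1}$. The level-$(n-1)$ recursion gives $\beta_j=\sum_{i=0}^{p-2}\lambda^i b_{i,j,n-2}^p$, so $T(\beta_j)=0$, while the identity above gives $\beta_j^p=T(\alpha^j a)$. Because $\alpha^p\in K^p$ and $T$ is $K^p$-linear, $T(\alpha^{j+p}a)=\alpha^p T(\alpha^j a)$, whence $\beta_{j+p}^p=\alpha^p\beta_j^p$ and so $\beta_{j+p}=\alpha\beta_j$; iterating, $\beta_{r+kp}=\alpha^k\beta_r$, and $T(\beta_{r+kp})=0$ yields $T(\alpha^k\beta_r)=0$ for all $k$ with $r+kp<p^n$, i.e.\ for $k=0,\dots,p^{n-1}-1$. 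Since $n\ge 2$ this range contains $\{0,\dots,p-1\}$, so if some $\beta_r\ne 0$ then (using $\alpha\in K^p(\lambda)$) $\{\alpha^k\beta_r\}_{k<p}$ is a full $K^p$-basis of $K^p(\lambda)$ annihilated by $T$, forcing $T\equiv 0$ — impossible, as $T(\lambda^{p-1})=1$. Hence every $\beta_j=0$ and $f_X=aS$; feeding this back through the level-$(n-1)$ recursion annihilates the degree-$p^{n-2}$ terms of the $f_i$, so each $f_i$ is homogeneous of degree $p^{n-1}$ in the $S_j$ and linear in $S$, as claimed. (The case $n=1$ is immediate, since then ``degree below $p^{n-1}=1$'' is vacuous and $b_{j,0}=0$ is automatic.) I expect this final functional-annihilation step, which is where the arithmetic of $\alpha$ against $\lambda$ is decisive, to be the main obstacle.
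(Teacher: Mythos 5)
Your setup is sound and your reduction of the problem to the coefficient system is correct: with $f_X=aS+\sum b_{j,m}S_j^{p^m}$ and $f_i=a_iS+\sum b_{i,j,m}S_j^{p^m}$, the defining relation is equivalent to $b_{j,0}=0$ together with the recursion $b_{j,m}=\lambda^{p-1}b_{j,m-1}^p+\sum_i\lambda^i b_{i,j,m-1}^p-[m=n]\alpha^j a$ (and the $S$-coefficient identity, which should read $\alpha^{p-1}a=\lambda^{p-1}a^p+\sum_i\lambda^i a_i^p$, though this slip is harmless since you only invoke it for $p=2$). The derivations of the level-$n$ identity $(*)$, of $a\neq 0$, and of $K^p(\alpha)=K^p(\lambda)$ are all fine. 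The genuine gap is the sentence claiming that the top-down reading of the recursion ``confines all nonzero Frobenius-degrees of the $f_\bullet$ to $\{p^{n-2},p^{n-1}\}$, with $f_X$ moreover carrying no $S_j$-term of degree below $p^{n-1}$.'' Reading from the top only forces the levels $m\geq n$ to vanish and produces $(*)$; the coefficients $b_{i,j,m}$ of the $f_i$ at levels $m\leq n-2$ are free parameters of the system, with the $b_{j,m}$ of $f_X$ determined by them, and nothing in that reading kills them. This matters because your key assertion $T(\beta_j)=0$ is exactly the statement $b_{j,n-2}=0$: the level-$(n-1)$ recursion gives $\beta_j=\lambda^{p-1}b_{j,n-2}^p+\sum_i\lambda^i b_{i,j,n-2}^p$, so $T(\beta_j)=b_{j,n-2}^p$, which vanishes automatically only when $n\leq 2$ (where $b_{j,n-2}=b_{j,0}=0$ is given). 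For $n\geq 3$ the honest constraint is only that the $n$-fold iterate of $x\mapsto T(x)^{1/p}$ annihilates $\alpha^ja$ (the chain terminating in $b_{j,0}=0$), e.g.\ for $n=3$ one knows only $T(\beta_j)\in K^{p^2}$, not $T(\beta_j)=0$; and the relation $\beta_{j+p}=\alpha\beta_j$ does not descend to the lower levels because $T$ is only $K^p$-linear and $\alpha\notin K^p$. So the final functional-annihilation step, which you correctly identify as the crux, is only established for $n\leq 2$.

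For comparison, the paper sidesteps precisely this difficulty by an induction on $n$ at the level of groups: it constructs explicit surjections $V_{m+1,\alpha}\twoheadrightarrow V_{m,\alpha}$, given by $S\mapsto S$ and $Z_j\mapsto\sum_i\alpha^iS_{j+p^mi}^p$, whose kernels are powers of the infinitesimal group $\R_{K^{1/p}/K}(\alpha_p)$, and invokes \cite[Lem.\,7.9]{rospermawound} to show that any homomorphism to $V_{1,\lambda}$ must factor through the composite $V_{n,\alpha}\twoheadrightarrow V_{1,\alpha}$. Since that composite is visibly homogeneous of degree $p^{n-1}$ in the $S_j$ and linear in $S$, the whole lemma reduces to $n=1$ --- which is exactly the case where your elementary computation is complete and correct. (The paper also first reduces to degree of imperfection $1$ and obtains $K^p(\alpha)=K^p(\lambda)$ by a splitting-field argument; your direct derivation of that equality from $(*)$ is a legitimate alternative.) To salvage your approach you would need either to import such a factorization, or to carry out a genuinely more elaborate induction on the level $m$ tracking which subfield the iterated $T$-values lie in; as written, the argument does not prove the lemma for $n\geq 3$.
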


\begin{proof}
Because $V_{1,\lambda}$ is a minimal unirational $K$-group -- in the sense that its only nonzero unirational $K$-subgroup is itself \cite[Props.\,2.18,2.20]{achetunir} -- $f$ is surjective. The group $V_{n,\alpha}$ splits over $K(\alpha^{1/p^{\infty}})$, hence so does $V_{1,\lambda}$, so we deduce that $\lambda^{1/p} \in K(\alpha^{1/p^\infty})$. It follows that $K^p(\alpha) = K^p(\lambda)$. Let $\{\alpha\} \coprod \mathscr{B}$ be a $p$-basis for $K$. Then the extension $L := K(\mathscr{B}^{p^{-\infty}})$ obtained by adjoining all $p$-power roots of elements of $\mathscr{B}$ to $K$ has $\{\alpha\}$ as a $p$-basis. Further, one has $\alpha \in L^p(\lambda)$, so $\lambda \notin L^p$ and all hypotheses are preserved upon scalar extension to $L$. We may therefore assume that $K$ has degree of imperfection $1$.

We claim that we have for $m \geq 1$ a surjection $f_m\colon V_{m+1,\alpha} \rightarrow V_{m,\alpha}$ with kernel a power of $\R_{K^{1/p}/K}(\alpha_p)$. Indeed, using the equation (\ref{eqnforVnalpha}), if we use the letter $S$ to denote the variables for $V_{m+1,\alpha}$, and the letter $T$ to denote those for $V_{m,\alpha}$, we have the map $(S, (S_j)_j) \mapsto (Z, (Z_j)_j)$, where $Z := S$ and $Z_j := \sum_{i=0}^{p-1} \alpha^iS_{j+p^mi}^p$. The kernel of this map is a power of the $K$-group $$W := \left\{\sum_{i=0}^{p-1}\alpha^iX_i^p = 0\right\} \subset \Ga^p,$$ which is isomorphic to $\R_{K^{1/p}/K}(\alpha_p)$ \cite[Prop.\,7.4]{rospermawound}.

By \cite[Lem.\,7.9]{rospermawound}, therefore, and an easy induction, any map $V_{n,\alpha} \rightarrow V_{1,\lambda}$ must factor through the map $V_{n,\alpha} \rightarrow V_{1,\alpha}$ obtained by iterating the maps $f_m$ above. The lemma therefore reduces to the case $n = 1$, where we must show that the map in question is obtained by a linear change of variables, and that the coefficient of $S$ in $X$ is nonzero. (That $X$ may be written {\em uniquely} as a $p$-polynomial of degree $\leq 1$ in $S$ follows from \cite[Prop.\,6.4]{rosmodulispaces}.) For this, we may extend scalars to $K_s$ and thereby assume that $K$ is separably closed. The lemma then follows from \cite[Lem.\,7.1,Prop.\,9.7]{rospermawound}, since, for a linear change of variables that transforms the polynomial $-X + \lambda^{p-1}X + \sum_{i=0}^{p-2} \lambda^iS_i^p$ into a nonzero scalar multiple of $-S + \alpha^{p-1}S + \sum_{i=0}^{p-2} \alpha^iS_i^p$, $X = X(S, S_i)$ must be a nonzero multiple of $S$.
\end{proof}

Now we give our examples.

\begin{example}
\label{extwdbywdnotunirex}
Let $K$ be a field of degree of imperfection $>1$, and let $\lambda, \mu \in K$ be $p$-independent. (This has various equivalent definitions. For our purposes, the reader can take it to mean that $[K^p(\lambda, \mu): K^p] = p^2$.) We use the equation (\ref{eqnforVnalpha}) for the variables on $V_{1,\lambda}$, using the letter $Y$ instead of $S$ to denote the variables: $Y, Y_0, \dots, Y_{p-2}$. We have an exact sequence 
\begin{equation}
\label{extdefiningv1mu}
0 \longrightarrow V_{1,\mu} \longrightarrow \Ga^p \xlongrightarrow{F_{\mu}} \Ga \longrightarrow 0,
\end{equation}
where $$F_{\mu}(X_0, \dots, X_{p-1}) := -X_{p-1} + \sum_{i=0}^{p-1}\mu^iX_i^p.$$ Consider the homomorphism $f\colon V_{1,\lambda} \rightarrow \Ga$ defined by $Y$, and let $E \in \Ext^1(V_{1,\lambda}, V_{1,\mu})$ be its image under the connecting map $\delta$ associated to (\ref{extdefiningv1mu}). Then we claim that $E$ is not unirational, despite the fact that $V_{1,\lambda}$ and $V_{1,\mu}$ are.

In proving that $E$ is not unirational, we may assume that $K$ is separably closed. Suppose that $E$ is unirational, and we will derive a contradiction. The group $E$ is commutative and $p$-torsion, hence by Lemma \ref{unirimpcertmaps}, there exist $\alpha \in K - K^p$, $n \geq 1$, and a homomorphism $V_{n,\alpha} \rightarrow E$ whose image in $V_{1,\lambda}$ is nonzero. Stated differently, there is a nonzero homomorphism $g\colon V_{n,\alpha} \rightarrow V_{1,\lambda}$ such that $\delta(f\circ g) = 0$, or equivalently, $f\circ g\colon V_{n,\alpha} \rightarrow \Ga$ is of the form $F_{\mu}(h)$ for some homomorphism $h\colon V_{n,\alpha} \rightarrow \Ga^p$.

By Lemma \ref{mapVnalphtoVaalph} (and in the notation of that lemma), $K^p(\alpha) = K^p(\lambda)$, and
\begin{equation}
\label{cSnonzereqn}
f \circ g = cS \mbox{ for some } 0 \neq c \in K.
\end{equation}
Applying \cite[Prop.\,6.4]{rosmodulispaces}, there exist $p$-polynomials $X_0, \dots, X_{p-1} \in K[S, (S_j)_j]$ of degree $\leq 1$ in $S$ such that
\begin{equation}
\label{cSeqn}
f\circ g = cS = -X_{p-1} + \sum_{i=0}^{p-1} \mu^iX_i^p
\end{equation}
as homomorphisms $V_{n,\alpha} \rightarrow \Ga$. Applying \cite[Prop.\,6.4]{rosmodulispaces} once more, if we use the equation (\ref{eqnforVnalpha}) to eliminate $S^p$ from the $X_i^p$ terms above, then this becomes an identity of polynomials in $S$ and the $S_j$.
Write $$X_i = r_iS + \sum_j r_{ij}S_j^{p^{n-1}} + G_i$$ with $r_i, r_{ij} \in K$, and where $G_i$ does not involve either $S$ or $S_j^{p^{n-1}}$. We claim that $G_i$ has degree $< p^{n-1}$. Indeed, if not, then looking at the leading term in some $S_j$ in (\ref{cSeqn}) would yield a nonzero solution over $K$ to the equation $\sum_{i=0}^{p-1}\mu^ix_i^p = 0$, which would violate the fact that $\mu \notin K^p$. 

Comparing coefficients of $S_0^{p^n}$ in (\ref{cSeqn}) now yields
\[
\alpha^{1-p}\sum_{i=0}^{p-1} \mu^ir_i^p = \sum_{i=0}^{p-1}\mu^ir_{i0}^p.
\]
If some $r_i$ is nonzero, then $\alpha \in K^p(\mu)$. Since $\alpha \in K^p(\lambda)$, the $p$-independence of $\lambda, \mu$ would then imply that $\alpha \in K^p$, which is false. Thus $r_i = 0$ for all $0 \leq i < p$. It follows that each $X_i$ is a $p$-polynomial in the $S_j$ only, hence by (\ref{cSeqn}), $c = 0$, in violation of (\ref{cSnonzereqn}). This contradiction shows that $E$ is not unirational.
\end{example}

\begin{example}
\label{unirnotinherexts}
Let $K$ be a field of degree of imperfection $>1$, and let $\lambda, \mu \in K$ be $p$-independent elements. Assume first that $p > 2$. We will sketch how to modify the example and argument below when $p = 2$. Define $U \subset \Ga^{p+1}$ to be the $K$-group scheme
\begin{equation}
\label{Ueqnunirnotinherexts}
U := \left\{-X_{p-1} + \mu Y^p + \sum_{i=0}^{p-1}\lambda^iX_i^{p} = 0\right\}.
\end{equation}
The map $(Y, X_0, \dots X_{p-1}) \mapsto Y$ defines a surjective homomorphism $U \rightarrow \Ga$ with kernel $V_{1,\lambda}$. Thus $U$ is an extension of $\Ga$ by the unirational group $V_{1,\lambda}$, but we claim that $U$ is not unirational.

We first note that $K/\F_p(\lambda, \mu)$ is separable, precisely because $\lambda, \mu$ are $p$-independent over $K$ \cite[Th.\,26.6]{matsumura}.
Thus the non-unirationality of $U$ may be checked over $\F_p(\lambda, \mu)$ \cite[Th.\,1.6]{rosrigidity}, hence we may assume that $\lambda, \mu$ form a $p$-basis for $K$. We may also extend scalars to $K_s$ and thereby additionally assume that $K$ is separably closed. Assume for the sake of contradiction that $U$ {\em is} unirational. Then by Lemma \ref{unirimpcertmaps}, there exist $n > 0$,
\begin{equation}
\label{alphanotinK^pex}
\alpha \in K - K^p,
\end{equation}
and a homomorphism $g\colon V_{n,\alpha} \rightarrow U$ whose projection onto the $Y$ coordinate is nonzero. By \cite[Prop.\,6.4]{rosmodulispaces}, $g = (Y(S,(S_j)_j), X_0(S,(S_j)_j), \dots, X_{p-1}(S,(S_j)_j))$, where
\begin{align}
\label{eqnforYXi}
Y & = c\cdot S + F((S_j)_j) \nonumber \\
X_i &= c_i\cdot S + F_i((S_j)_j)
\end{align}
for some $c, c_i \in K$ and some $p$-polynomials $F, F_i$, and
\begin{equation}
\label{Yneq0}
Y(S,(S_j)_j) \neq 0.
\end{equation}
These expressions must satisfy the defining equation (\ref{Ueqnunirnotinherexts}) for $U$.

Substituting the expressions (\ref{eqnforYXi}) into (\ref{Ueqnunirnotinherexts}), and using the equation (\ref{eqnforVnalpha}) for $V_{n,\alpha}$ to eliminate $S^p$, we obtain
\begin{equation}
\label{S_ieqn}
-c_{p-1}S + \alpha^{1-p}\left(\mu c^p + \sum_{i=0}^{p-1} \lambda^ic_i^p\right)\left[S - \sum_{\substack{0 \leq j < p^n \\ j \not\equiv -1\pmod{p}}}\alpha^jS_j^{p^n} \right] - F_{p-1} + \mu F^p + \sum_{i=0}^{p-1}\lambda^iF_i^p = 0.
\end{equation}
Because the above expression has degree $< p$ in $S$, it is an identity of $p$-polynomials in the variables $S, S_j$ \cite[Prop.\,6.4]{rosmodulispaces}. We claim that 
\begin{equation}
\label{degF_p-1bd1}
\mathrm{deg}(F_{p-1}) \leq p^{n-1}. 
\end{equation}
Indeed, if not, then we have $p^d := \mathrm{deg}(F_{p-1}) > p^{n-1}$. Let $\beta_{ij}$ be the coefficient of $S_j^{p^{d}}$ in $F_i$, and let $\beta_j$ be the coefficient of $S_j^{p^d}$ in $F$, so $\beta_{p-1,j} \neq 0$ for some $j$. Then comparing coefficients of $S_j^{p^{d+1}}$ in (\ref{S_ieqn}) yields $\mu\beta_j^p + \sum_{i=0}^{p-1}\lambda^i\beta_{ij}^p = 0$, and the $p$-independence of $\lambda, \mu$ then shows that all $\beta_{ij} = 0$, in violation of the nonvanishing of $\beta_{p-1,j}$. Thus (\ref{degF_p-1bd1}) holds.

Let $r_0 := \mu c^p + \sum_{i=0}^{p-1}\lambda^ic_i^p$. We claim that $r_0 \neq 0$. Indeed, suppose that $r_0 = 0$ and we will obtain a contradiction. We have $c = 0$ by $p$-independence of $\lambda, \mu$, hence $F \neq 0$ by (\ref{Yneq0}) and (\ref{eqnforYXi}). Additionally, (\ref{S_ieqn}) would then yield
\begin{equation}
\label{falseSieqn}
-c_{p-1}S - F_{p-1} + \mu F^p + \sum_{i=0}^{p-1}\lambda^iF_i^p \stackrel{?}{=} 0.
\end{equation}
Let $p^d$ denote the maximum degree in of $F, F_i$ (well-defined since $F \neq 0$), and suppose that one of $F,F_i$ involves a term $S_j^{p^d}$ with nonzero coefficient. Then comparing the coefficients of $S_j^{p^{d+1}}$ in (\ref{falseSieqn}) yields a nontrivial solution over $K$ to the equation $\mu a^p + \sum_{i=0}^{p-1}\lambda^ia_i^p = 0$, in violation of the $p$-independence of $\lambda, \mu$. This contradiction shows that indeed $r_0 \neq 0$.

Let $$R := \sum_{i=0}^{p-1}\lambda^i(K^p + \mu K^p) \subset K,$$ and note that $r_0 \in R$. Comparing coefficients of $S_j^{p^n}$ in (\ref{S_ieqn}), and using (\ref{degF_p-1bd1}), we find that
\[
\alpha^{1-p+j} \in r_0^{-1}R, \hspace{.3 in} 0 \leq j < p^n, j \not\equiv -1 \pmod{p}.
\]
Since $r_0^{-1}R$ is closed under multiplication by $K^p$, we also have $\alpha^{1+j} \in r_0^{-1}R$, hence $\alpha^j \in r_0^{-1}R$ for all $0 < j < p$. Since $1 \in r_0^{-1}R$, and $r_0^{-1}R$ is closed under multiplication by $\lambda$, it follows that $K^p[\alpha, \lambda] \subset r_0^{-1}R$. We claim that $\alpha \in K^p(\lambda)$. For if not, then -- because $K$ has degree of imperfection $2$ -- it would follow that $K = K^p[\alpha, \lambda] \subset r_0^{-1}R$, hence $R = K$. But $\mu^2 \notin R$ due to the $p$-independence of $\lambda, \mu$. (Here we use $p > 2$.) Thus 
\begin{equation}
\label{alphainK^plambda}
\alpha \in K^p(\lambda),
\end{equation}
as claimed.

We claim that $F$ is homogeneous of degree $p^{n-1}$. To prove this, we first extend scalars to $L_1:= K(\mu^{1/p})$. Taking $Z, Z_j$ (instead of $S, S_j$) to be the variables on $V_{1,\lambda}$, we have the map $h_1\colon U_{L_1} \rightarrow V_{1,\lambda}$ defined by the formula $(Y, (X_i)_i) \mapsto (Z, (Z_j)_j)$ where $Z := X_{p-1}$, $Z_j := X_j$ for $0 < j < p-1$, and $Z_0 := X_0+\mu^{1/p}Y$. Because $\alpha \notin L_1^p = K^p(\mu)$, due to (\ref{alphanotinK^pex}) and the equality $K^p(\lambda) \cap K^p(\mu) = K^p$, we may Lemma \ref{mapVnalphtoVaalph} to the map $h_1\circ g$, in conjunction with \cite[Prop.\,6.4]{rosmodulispaces} to conclude that $(X_0 + \mu^{1/p}Y)(S, (S_j)_j)$ is homogeneous of degree $p^{n-1}$ in the $S_j$ (by which we mean once one excludes the linear term involving $S$). Now we extend scalars to $L_2 := K((\mu/\lambda)^{1/p})$. Over $L_2$, we have the map $h_2\colon U_{L_2} \rightarrow V_{1,\lambda}$ defined by the formula $Z := X_{p-1}$, $Z_j := X_j$ for $j = 0$ and $1 < j < p-1$, and $Z_1 := X_1 + (\mu/\lambda)^{1/p}Y$. (Here we again use $p > 2$ to ensure that $1 < p-1$.) Once more, we may apply Lemma \ref{mapVnalphtoVaalph} to $h_2\circ g$ to conclude that $Z_0 = X_0$ is homogeneous of degree $p^{n-1}$ in the $S_j$. Since $X_0+\mu^{1/p}Y$ was as well, we deduce that $Y(S, (S_j)_j)$ is homogeneous of degree $p^{n-1}$ in those variables, hence $F$ is (see (\ref{eqnforYXi})). This proves the claim.

Now we are ready to obtain our contradiction and thereby complete the proof that $U$ is not unirational. Write $F = \sum_j \gamma_jS_j^{p^{n-1}}$. Comparing coefficients of $S_j^{p^n}$ in (\ref{S_ieqn}), and using (\ref{degF_p-1bd1}), we find that $\mu(\gamma_j^p - \alpha^{1-p+j}c^p) \in K^p[\alpha, \lambda]$. If $\gamma_j^p-\alpha^{1-p+j}c^p \neq 0$ for some $j$, then it follows that $\mu \in K^p[\alpha, \lambda] \subset K^p[\lambda]$ by (\ref{alphainK^plambda}), a contradiction. Thus we must have $\gamma_j^p -\alpha^{1-p+j}c^p = 0$ for all $j$. Taking $j = 0$, we must have $c = 0$ because $\alpha \notin K^p$. Thus $\gamma_j = 0$ for all $j$, hence $F = 0$, in violation of (\ref{Yneq0}) and (\ref{eqnforYXi}).
\end{example}

\begin{remark}
When $p = 2$, the above argument does not work, and indeed, in this case $U$ {\em is} unirational, being a smooth quadric hypersurface (with a rational point). But if one instead takes the $K$-group 
\begin{equation}
\label{rmkunirextschar2eqn1}
U' := \{\mu Y^4 - X_1 + X_0^2 + \lambda X_1^2 = 0\} \subset \Ga^3,
\end{equation}
then projection onto $Y$ defines a surjection $U' \rightarrow \Ga$ with unirational kernel, but $U'$ is not unirational. We merely sketch the proof of this last claim. Using the notation of (\ref{eqnforYXi}), one writes out the conditions for $Y, X_j$ to satisfy the defining equation (\ref{rmkunirextschar2eqn1}) for $U'$. One first shows that $\mathrm{deg}(F_1) \leq 2^n$ as above, and then compares leading coefficients of $S_j^{2^{n+1}}$ and uses the $2$-independence of $\lambda, \mu$ and the fact that $\alpha \notin K^2$ to conclude that $c = 0$. Then one shows that in fact $\mathrm{deg}(F_1) \leq 2^{n-1}$. Because of the surjections $V_{n+1,\alpha} \twoheadrightarrow V_{n,\alpha}$ constructed in the proof of Lemma \ref{mapVnalphtoVaalph}, one may assume that $n \geq 2$. Comparing coefficients of $S_j^{2^n}$, one then shows that $\alpha, \alpha^{-1} \in r_0^{-1}R'$, where $R' := \mu K^4 + K^2(\lambda)$ for some $0 \neq r_0 \in K^2(\lambda)$. One checks that this implies that in fact $\alpha \in K^2(\lambda)$. Then, analogously to the argument above, one passes to the extensions $L_1 := K(\mu^{1/2})$ and $L_2 := K((\mu/\lambda)^{1/4})$ and makes suitable changes of variables (same as in Example \ref{unirnotinherexts} for $L_1$, but in the case of $L_2$, the change is $(Y, X_0, X_1) \mapsto (Y, X_0 + (\mu/\lambda)^{1/4}Y, X_1 + (\mu/\lambda)^{1/2}Y^2)$) to apply Lemma \ref{mapVnalphtoVaalph} and deduce that $F$ is homogeneous of degree $2^{n-2}$. Finally, one concludes as in the example above that in fact $F = 0$, hence $Y = 0$. 
\end{remark}

\section{Ext-unirational groups}
\label{extunirsec}

The examples of the preceding section demonstrate that unirationality is not inherited by extensions over fields of degree of imperfection $>1$. In light of this failure, it is natural to make the following definition.

\begin{definition}
\label{extunirational}
We say that a smooth $K$-group scheme $G$ is {\em ext-unirational} when it admits a filtration $$1 = G_0 \trianglelefteq G_1 \trianglelefteq \dots \trianglelefteq G_n = G$$ such that $G_{i+1}/G_i$ is unirational for all $0 \leq i < n$.
\end{definition}

Ext-unirational groups are necessarily affine and connected, because this is true of unirational groups. By Theorem \ref{extunirdegimp1}, when $K$ has degree of imperfection $1$ (also, trivially, when $K$ is perfect), ext-unirationality is the same as unirationality, but over fields of larger degree of imperfection, ext-unirationality is a strictly weaker condition. Note also that ext-unirationality (like unirationality) is inherited by quotients and (unlike unirationality in general) by extensions. As usual, the notion of ext-unirationality is only interesting over imperfect fields, as over perfect fields every connected linear algebraic group is unirational.

Ext-unirationality may be rephrased more canonically as follows. Given a smooth finite type $K$-group scheme $G$, let $G_{\mathrm{uni}} \subset G$ denote its maximal unirational $K$-subgroup scheme. Then $G_{\mathrm{uni}}$ is a normal subgroup of $G$ \cite[Cor.\,7.11]{rosrigidity}. Now define a sequence of $K$-group quotients $G_u^{[n]}$ of $G$ recursively by the formulas $$G_u^{[0]} := G, \hspace{.3 in} G_u^{[n+1]} := G_u^{[n]}/(G_u^{[n]})_{\uni}, n \geq 0.$$ Then $G$ is ext-unirational if and only if $G_u^{[n]} = 1$ for some (equivalently, all sufficiently large) $n \geq 0$. This also shows that we may replace the condition in Definition \ref{extunirational} that each $G_i$ be normal in $G_{i+1}$ by the condition that it is normal in $G$ without altering the definition. Furthermore, the construction of the $G_u^{[n]}$ commutes with separable extension of the ground field \cite[Cor.\,7.10]{rosrigidity}, and as a consequence, ext-unirationality is insensitive to passage to separable extensions:

\begin{proposition}
If $L/K$ is a separable field extension, then $G$ is ext-unirational over $K$ if and only if it is so over $L$.
\end{proposition}

The following lemma will be useful for proving results beyond the affine setting.

\begin{lemma}
\label{abvarquotaffker}
If $G$ is a connected $K$-group scheme, then there is a surjection $f\colon G \twoheadrightarrow A$ such that $A$ is an abelian variety and $\ker(f)$ is affine.
\end{lemma}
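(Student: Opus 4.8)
The plan is to use the standard structure theory for connected group schemes over a field, specifically the Chevalley-type decomposition (or more precisely Barsotti--Chevalley), to extract an abelian variety quotient with affine kernel. The statement I want is exactly the form of the Barsotti--Chevalley theorem: over a perfect field, any connected group scheme sits in an exact sequence $1 \to H \to G \to A \to 1$ with $H$ affine (smooth connected, even) and $A$ an abelian variety. Since the excerpt's conventions only require $G$ to be a connected $K$-group scheme (not affine, not even smooth a priori), the main work is to reduce to a situation where this classical theorem applies.

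First I would reduce to the case that $G$ is smooth. If $G$ is not smooth, I would replace it by $G_{\red}$ when $K$ is perfect, or more robustly pass to $G^{\sm}$ or use that the affineness of a kernel can be checked after faithfully flat base change; in any case the relevant abelian variety quotient and the affineness of the kernel are insensitive to infinitesimal thickenings since an abelian variety is smooth and proper. Next I would handle the descent from $\overline{K}$ to $K$: over $\overline{K}$ the Barsotti--Chevalley theorem produces $1 \to H \to G_{\overline{K}} \to A \to 1$, and the subtlety is that the classical statement is cleanest over a perfect field, so one must descend the subgroup $H$ (and hence the quotient $A$) back down to $K$. The cleanest route is to invoke the modern form of the theorem valid over an arbitrary field, where one takes $A$ to be the \emph{maximal} abelian variety quotient; its formation commutes with field extension, so the kernel $H$, being affine after base change to $\overline{K}$, is affine over $K$ (affineness descends along faithfully flat base change).

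The key steps, in order, are: (1) reduce to $G$ smooth, noting that for the conclusion we only care about the reduced structure since the abelian variety and the affineness of the kernel are unaffected by nilpotents; (2) invoke Barsotti--Chevalley (in the general-field formulation, e.g. as in Brion's work or Conrad's notes) to obtain the maximal abelian variety quotient $f \colon G \twoheadrightarrow A$; (3) verify that $\ker(f)$ is affine, using that affineness may be checked after base change to $\overline{K}$, where it follows from the classical decomposition. Alternatively, if one wishes to stay self-contained using only tools in the spirit of the paper, one can note that $G$ has a largest connected affine normal subgroup $N$ (the affinization kernel), and that $G/N$ is an abelian variety by the anti-affine theory, giving the desired $f$ directly.

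The main obstacle I expect is the descent to a non-perfect base field $K$, since the most frequently cited forms of Chevalley's theorem assume $K$ perfect, and the paper is fundamentally concerned with imperfect fields. This is resolved by using the maximal abelian variety quotient, whose formation is compatible with arbitrary field extension, so that one never needs to descend a subgroup by hand: the kernel is characterized intrinsically over $K$ and its affineness is a geometric property checkable over $\overline{K}$. The only genuinely routine points are that affineness descends along $\Spec \overline{K} \to \Spec K$ and that a smooth connected affine subgroup base-changes compatibly, both of which are standard flat-descent facts.
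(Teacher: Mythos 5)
Your overall strategy (reduce to the smooth case, get the decomposition over an algebraically/perfectly closed field, descend) is the same as the paper's, but the two arguments diverge at the crucial descent step, and that is exactly where your write-up is thinnest. The paper does not descend a subgroup at all: after replacing $G$ by the smooth quotient $G/I$ for an infinitesimal normal $I$ (SGA3, VII$_{\rm A}$, Prop.\,8.3 --- note that over an imperfect field one cannot use $G_{\red}$, which need not be a subgroup scheme, so ``pass to $G_{\red}$'' has to be replaced by precisely this statement), it applies Chevalley over the perfect closure, observes that the decomposition is then defined over some finite subextension $K^{1/p^n}$, and identifies $K^{1/p^n}$ with $K$ via the $p^n$-power map so that $G_{K^{1/p^n}}$ becomes $G^{(p^n)} = G/\ker(F^n_{G/K})$. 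Pulling the abelian variety quotient of $G^{(p^n)}$ back along the relative Frobenius gives the quotient of $G$, at the cost of enlarging the kernel by an infinitesimal group, which preserves affineness. Your route instead invokes the maximal abelian variety quotient over $K$ and asserts that ``its formation commutes with field extension, so the kernel is affine because it is so over $\overline{K}$.'' That compatibility with \emph{purely inseparable} extensions is precisely the nontrivial content here --- it is a theorem (Raynaud; Brion), and its standard proofs use essentially the Frobenius-twist argument above --- so if you are allowed to cite the general-field Barsotti--Chevalley theorem your proof is correct but amounts to citing a statement that already contains the lemma, and if you are not, this step is a genuine gap rather than a ``standard flat-descent fact.'' (Affineness of the kernel does descend from $\overline{K}$; what does not come for free is that the quotient you form over $K$ base-changes to the one you control over $\overline{K}$.) Finally, your parenthetical alternative is garbled: the ``affinization kernel'' is the largest \emph{anti-affine} subgroup, whose quotient is affine, not an abelian variety, so that route as stated does not produce the desired surjection.
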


\begin{proof}
If $\mathrm{char}(K) = 0$, then we are done by Chevalley's Theorem. If $\mathrm{char}(K) > 0$, then $G/I$ is smooth for some infinitesimal $K$-subgroup scheme $I \trianglelefteq G$ \cite[VII$_{\rm{A}}$, Prop.\,8.3]{sga3}, so we may assume that $G$ is smooth. By Chevalley again, $G$ admits a map as in the lemma over $K_{\perf}$, the perfect closure of $K$. Thus there is such a quotient over $G_{K^{1p^n}}$ for some $n > 0$.
We may identify $K^{1/p^n}$ with $K$ via the $p^n$-power map, and then $G_{K^{1/p^n}}$ becomes identified with $G^{(p^n)}$. Thus $G^{(p^n)}$ admits such a map. But because $G$ is smooth, $G^{(p^n)}$ is the quotient of $G$ by the $n$th order relative Frobenius map over $K$, so $G$ also admits an abelian variety quotient with affine kernel.
\end{proof}

For a smooth connected $K$-group scheme $G$, we denote its derived group by $\mathscr{D}G$ and its abelianization $G/\mathscr{D}G$ by $G^{\ab}$. Next we verify that ext-unirationality reduces to the $p$-torsion commutative setting.

\begin{proposition}
\label{extunirunipptorcom}
A smooth connected $K$-group scheme $G$ is ext-unirational if and only if its maximal commutative quotient $G^{\ab}$ is. If $G$ is affine and $\mathrm{char}(K) = p > 0$, then this is further equivalent to ext-unirationality of its maximal $p$-torsion commutative quotient $G^{\ab}/[p]G^{\ab}$.
\end{proposition}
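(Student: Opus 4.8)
The two \emph{only if} implications are immediate: $G^{\ab}$ and $G^{\ab}/[p]G^{\ab}$ are quotients of $G$, and ext-unirationality passes to quotients. For the converse of the first assertion, suppose $G^{\ab}$ is ext-unirational. First I would reduce to the case that $G$ is affine: by Lemma \ref{abvarquotaffker} there is a surjection $f\colon G \twoheadrightarrow A$ onto an abelian variety with affine kernel, and since $A$ is commutative $f$ factors through $G^{\ab}$; thus $A$ is a quotient of $G^{\ab}$, hence ext-unirational, hence affine, hence trivial, so $G = \ker f$ is affine.

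The main step is then to prove, for $G$ smooth connected affine with $G^{\ab}$ ext-unirational, that $G$ is ext-unirational, by induction on $\dim G$. The engine is the claim that a nontrivial such $G$ always satisfies $G_{\uni} \neq 1$. Granting this, $G_{\uni}$ is a nontrivial smooth connected normal unirational subgroup \cite[Cor.\,7.11]{rosrigidity}, $(G/G_{\uni})^{\ab}$ is a quotient of $G^{\ab}$ and so ext-unirational, and $\dim(G/G_{\uni}) < \dim G$; by induction $G/G_{\uni}$ is ext-unirational, and an extension of an ext-unirational group by a unirational one is ext-unirational, so $G$ is ext-unirational.

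To see $G_{\uni} \neq 1$ I would split into cases according to a maximal torus $T$ of $G$ (which exists over $K$ and is geometrically maximal). If $T \neq 1$, then $T$ is itself a nontrivial unirational subgroup, so $G_{\uni} \supseteq T \neq 1$. If $T = 1$, then $G$ is unipotent. If moreover $G$ is commutative, then $G = G^{\ab}$ is nontrivial and ext-unirational, so $G_{\uni} = (G^{\ab})_{\uni} \neq 1$, since a nontrivial ext-unirational group has a nontrivial unirational subgroup (by the characterization via the $G_{\uni}^{[n]}$). The remaining, and essential, case is $G$ unipotent and noncommutative. Here I would take the last nonzero term $C$ of the lower central series of $G$, which is nontrivial, smooth, connected, and central, and let $C'$ be the penultimate term. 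The commutator map descends to a biadditive surjection $\overline{c}\colon G^{\ab} \times (C'/C) \twoheadrightarrow C$; fixing points $b$ in the second factor (over $K_s$, which is harmless since ext-unirationality is insensitive to separable extension) gives homomorphisms $\overline{c}(-,b)\colon G^{\ab} \to C$ whose images are ext-unirational and which, as $b$ ranges over $(C'/C)(K_s)$, generate the commutative group $C_{K_s}$. Since $C$ is commutative it is then a quotient of a finite product of these ext-unirational images, hence ext-unirational, and so $C_{\uni} \neq 1$ forces $G_{\uni} \neq 1$. Establishing ext-unirationality of $C$ in this way, i.e.\ extracting a unirational subgroup of the derived group purely from the commutator pairing with $G^{\ab}$, is the crux of the argument; the torus case, by contrast, is essentially free.

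Finally, for the equivalence with $G^{\ab}/[p]G^{\ab}$ when $G$ is affine of characteristic $p$, the first part reduces the problem to the commutative group $M := G^{\ab}$. Writing $T$ for its maximal torus, $M/T$ is unipotent, $M$ is ext-unirational iff $M/T$ is (as $T$ is unirational and ext-unirationality is closed under quotients and extensions), and $[p]T = T$ gives $M/[p]M \cong (M/T)/[p](M/T)$; so I may assume $M$ is unipotent. A smooth connected commutative unipotent group has finite exponent, so the descending chain $M \supseteq [p]M \supseteq \cdots$ terminates, and multiplication by $p^i$ induces surjections $M/[p]M \twoheadrightarrow [p^i]M/[p^{i+1}]M$ onto its successive quotients. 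Hence if $M/[p]M$ is ext-unirational then so is each graded piece, and therefore so is $M$; the reverse implication is again just passage to a quotient.
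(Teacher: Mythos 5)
Your proposal is correct, and in the decisive step it takes a genuinely different route from the paper. Both arguments share the outer reductions (to affine groups via Lemma \ref{abvarquotaffker}, to unipotent groups via tori, and a dimension induction whose engine is producing a nontrivial unirational subgroup), but they diverge on the noncommutative unipotent case. The paper runs a single induction carrying the weaker hypothesis that $G^{\ab}/[p]G^{\ab}$ is ext-unirational: it chooses a proper quotient $\overline{U}$ of minimal dimension through which the commutator map factors in its first variable, gets $\overline{U}_{\uni} \neq 1$ by induction, and then shows that for suitable points $u_1, u_2$ the map $u \mapsto c(u_1u,u_2)c(u_1,u_2)^{-1}$ is a nonconstant morphism from the unirational scheme $\overline{U}_{\uni}$ into $U$, which forces $U_{\uni} \neq 1$. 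You instead take the last nonzero term $C$ of the lower central series and use the biadditive pairing $G^{\ab} \times (C'/C) \to C$ (valid since $[\mathscr{D}G,C']\subseteq \mathscr{D}_{m+1}G=1$ and $C$ is central) to exhibit $C_{K_s}$ as generated by homomorphic images of $G^{\ab}_{K_s}$, hence as a quotient of a finite product of ext-unirational groups. This is more structural --- it shows the entire subgroup $C$ is ext-unirational rather than merely that $U_{\uni}\neq 1$ --- but it consumes the full hypothesis on $G^{\ab}$ rather than on $G^{\ab}/[p]G^{\ab}$, which is why you must then handle the $p$-torsion refinement by a separate reduction to the commutative case; your treatment of that case, via the surjections $M/[p]M \twoheadrightarrow [p^i]M/[p^{i+1}]M$, is essentially the paper's commutative-unipotent step, slightly streamlined in that it filters $M$ all at once instead of invoking a further dimension induction on the cokernel of $[p]\colon \overline{U}\to U$. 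Both routes are complete; the paper's is more economical in hypotheses, yours isolates a cleaner intermediate statement about the derived series.
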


\begin{proof}
When $\mathrm{char}(K) = 0$, ext-unirationality is equivalent to affineness, so in this case it suffices to note that affineness of $G^{\ab}$ implies the same for $G$, a consequence of Lemma \ref{abvarquotaffker} because abelian varieties are commutative. Since ext-unirationality implies affineness, this also reduces the assertion for smooth connected $G$ to the affine setting in characteristic $p$. Thus we may assume that $G$ is affine and that $\mathrm{char}(K) = p > 0$, and we wish to show that ext-unirationality of $G^{\ab}/[p]G^{\ab}$ implies the same for $G$.

We may assume that $K$ is separably closed. The proof is by dimension induction. For any quotient $\overline{G}$ of $G$, the map $G^{\ab}/[p]G^{\ab}\rightarrow \overline{G}^{\ab}/[p]\overline{G}^{\ab}$ is surjective, so if $\mathrm{dim}(\overline{G}) < \mathrm{dim}(G)$, then we may assume that $\overline{G}$ is ext-unirational. First suppose that $G = U$ is commutative and unipotent. If $U$ is $p$-torsion, then the proposition is immediate, so assume it is not, and let $n \geq 1$ be such that $[p^{n+1}]U = 0$ but $[p^n]U \neq 0$. Let $\overline{U} := U/[p^n]U$. Then $0 < \mathrm{dim}(\overline{U}) < \mathrm{dim}(U)$, and in particular, $\overline{U}$ is ext-unirational. The (nonzero) map $[p]\colon U \rightarrow U$ factors through a nonzero map $[p]\colon \overline{U} \rightarrow U$ whose image is ext-unirational. Since the image is nonzero, the cokernel is also ext-unirational, hence so is $U$. This completes the proof when $U$ is commutative unipotent.

Now suppose that $G = U$ is non-commutative unipotent, so the commutator map $c\colon U^2 \rightarrow U$ is nonconstant. Let $\overline{U}$ be a maximal dimensional quotient of $U$ such that $c$ factors through a map $\overline{c}\colon \overline{U} \times U \rightarrow U$. Because $c$ is nonconstant, $\overline{U} \neq 1$. On the other hand, because $U$ is nilpotent, it admits a nontrivial smooth connected central $K$-subgroup $U'$, and the commutator map factors through a map $U/U' \times U \rightarrow U$, hence $\mathrm{dim}(\overline{U}) < \mathrm{dim}(U)$. In particular, $\overline{U}$ is ext-unirational, so $\overline{U}_{\uni} \neq 1$. If the map $\overline{U}_{\uni} \rightarrow U$ defined by the formula $u \mapsto c(u_1u, u_2)c(u_1,u_2)^{-1}$ is constant for all $u_1 \in \overline{U}(K)$, $u_2 \in U(K)$, then -- because $K$ is separably closed and $\overline{U}, U$ are smooth -- it would follow that $c$ factors through a map $\overline{U}/\overline{U}_{\uni} \times U \rightarrow U$, in violation of the minimality of $\mathrm{dim}(\overline{U})$. Thus one can find $u_1, u_2$ such that this map is nonconstant, so we have a nonconstant map from a positive-dimensional unirational $K$-scheme into $U$. It follows that $U_{\uni} \neq 1$, hence $U/U_{\uni}$ is ext-unirational, hence so is $U$. This completes the proof of the proposition for unipotent $G$.

Next we treat smooth connected affine $G$, where we assume that $G^{\ab}/[p]G^{\ab}$ is ext-unirational, and we wish to prove the same for $G$. Let $G_t \subset G$ denote the $K$-subgroup generated by the $K$-tori of $G$. This is a normal $K$-subgroup, and $U := G/G_t$ is unipotent \cite[Prop.\,A.2.11]{cgp}. Furthermore, because tori are unirational, $G$ is ext-unirational if and only if $U$ is. The map $G^{\ab}/[p]G^{\ab} \rightarrow U^{\ab}/[p]U^{\ab}$ is surjective (in fact, it is an isomorphism) and ext-unirationality is inherited by quotients, so the general case follows from the unipotent one.
\end{proof}

When $K$ has degree of imperfection $1$, the above proposition has the following nice consequence. 

\begin{corollary}
\label{uniriffabis}
Let $K$ be a field of degree of imperfection $1$. A smooth connected $K$-group scheme $G$ is unirational if and only if its abelianization $G^{\mathrm{ab}}$ is. If $G$ is also affine, then $G$ is unirational if and only if $G^{\ab}/[p]G^{\ab}$ is.
\end{corollary}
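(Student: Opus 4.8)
The plan is to deduce this purely formally from Theorem \ref{extunirdegimp1} and Proposition \ref{extunirunipptorcom}, exploiting the fact that over a field of degree of imperfection $1$ unirationality and ext-unirationality coincide. First I would record this coincidence explicitly: for any finite type $K$-group scheme $H$, unirationality trivially implies ext-unirationality via the one-step filtration $1 \trianglelefteq H$, while conversely any filtration $1 = H_0 \trianglelefteq H_1 \trianglelefteq \dots \trianglelefteq H_n = H$ with unirational successive quotients can be collapsed one term at a time: $H_1 = H_1/H_0$ is unirational, and if $H_i$ is unirational then $H_{i+1}$ is an extension of the unirational group $H_{i+1}/H_i$ by the unirational group $H_i$, hence unirational by Theorem \ref{extunirdegimp1}. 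Thus over such $K$ the two notions agree (as already remarked after Definition \ref{extunirational}).

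With this in hand, the first assertion follows from the chain of equivalences
\[
G \text{ unirational} \iff G \text{ ext-unirational} \iff G^{\ab} \text{ ext-unirational} \iff G^{\ab} \text{ unirational},
\]
where the two outer equivalences are the coincidence just noted, applied respectively to $G$ and to the smooth connected commutative group $G^{\ab}$, and the middle equivalence is the first statement of Proposition \ref{extunirunipptorcom}. For the affine case I would proceed identically, replacing the middle step by the second statement of Proposition \ref{extunirunipptorcom}, so that the chain becomes $G$ unirational $\iff$ $G$ ext-unirational $\iff$ $G^{\ab}/[p]G^{\ab}$ ext-unirational $\iff$ $G^{\ab}/[p]G^{\ab}$ unirational.

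There is no genuine obstacle here beyond assembling the two main results, so the proof will be short. The only points meriting a word of care are bookkeeping: that a field of degree of imperfection $1$ automatically has characteristic $p > 0$ (so the $p$-torsion refinement in Proposition \ref{extunirunipptorcom} is available), and that $G^{\ab}$ and $G^{\ab}/[p]G^{\ab}$ are again smooth and connected so that the unirational/ext-unirational coincidence legitimately applies to them; both follow from the corresponding properties of $G$ together with the fact that derived groups, images of homomorphisms, and quotients of smooth connected groups remain smooth connected.
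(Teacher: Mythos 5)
Your proposal is correct and takes essentially the same route as the paper, whose entire proof is ``Combine Proposition \ref{extunirunipptorcom} and Theorem \ref{extunirdegimp1}''; you have simply spelled out the collapsing of a filtration with unirational graded pieces via Theorem \ref{extunirdegimp1} and the bookkeeping that the paper leaves implicit. No issues.
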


\begin{proof}
Combine Proposition \ref{extunirunipptorcom} and Theorem \ref{extunirdegimp1}.
\end{proof}

We will discuss analogues of Corollary \ref{uniriffabis} over fields of higher degree of imperfection in the next section.

Let $K$ be a field, and suppose given two smooth connected $K$-group schemes $G, H$. Let $\phi\colon G \dashrightarrow H$ be a rational map which is birational, with inverse $\psi\colon H \dashrightarrow G$. Let $U \subset G$ denote the open locus of definition of $\phi$. Denote by $f$ the composition $U \xrightarrow{\phi} H \xrightarrow{\pi_H} H/H_{\uni}$. We claim that $f$ factors through the open set $\pi_G(U) \subset G/G_{\uni}$, where $\pi_G\colon G \rightarrow G/G_{'\uni}$ is the (flat) quotient map. For this assertion, we may assume that $K$ is separably closed, as the formation of the maximal unirational subgroup scheme is insensitive to separable extension. Let $a\colon U \times G_{\uni} \rightarrow G$ denote the action map $(u, x) \mapsto ux$, and let $V := a^{-1}(U)$. By descent, $f$ factors through $\pi_G(U)$ if and only if the two maps $V \rightarrow H/H_{\uni}$, $(u, g) \mapsto \phi(u)$, $(u, g) \mapsto \phi(ug)$ coincide. We may verify this upon restriction to $V_{u_0} := V \cap (u_0 \times G_{\uni})$ for each $u_0 \in U(K)$. Note that $V_{u_0}$ is open in $G_{\uni}$, hence unirational. We must show that the composition $V_{u_0} \rightarrow H \rightarrow H/H_{\uni}$ of $\pi_H \circ \phi$ is constant (where $\pi_H\colon H \rightarrow H/H_{\uni}$ is the quotient map), and this follows from the fact that the image of $\phi|V_{u_0}$ is a unirational subscheme of $H$. This proves that $f$ factors through a map $G/G_{\uni} \rightarrow H/H_{\uni}$. We may similarly factor the inverse map $\psi$ to get a map $H/H_{\uni} \rightarrow G/G_{\uni}$, and the compositions in both directions are the identity. We thereby deduce that $G_{\uni}$ is nontrivial if and only if $H_{\uni}$ is, and that $G/G_{\uni}$ is birationally equivalent to $H/H_{\uni}$. By dimension induction, therefore, $G$ is ext-unirational if and only if $H$ is. We have proved the following proposition.

\begin{proposition}
\label{extunibirinvt}
If $G, H$ are smooth connected $K$-group schemes that are birationally equivalent, then $G$ is ext-unirational if and only if $H$ is.
\end{proposition}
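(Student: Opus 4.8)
The plan is to run an induction on $\dim G$ using the canonical characterization of ext-unirationality recorded above: $G$ is ext-unirational precisely when the recursively defined quotients $G_{\uni}^{[n]}$ vanish for some (equivalently all) $n$. Concretely, the whole argument reduces to two claims: first, that the birational map $\phi$ descends to a birational map $\overline{\phi}\colon G/G_{\uni} \dashrightarrow H/H_{\uni}$; and second, that $G_{\uni}$ is trivial if and only if $H_{\uni}$ is. Granting these, if both $G_{\uni}$ and $H_{\uni}$ are trivial then $G = G/G_{\uni}$ is birational to $H = H/H_{\uni}$, and since these are smooth connected of equal dimension, $G$ is ext-unirational if and only if $G = 1$, which holds if and only if $H = 1$, i.e.\ if and only if $H$ is ext-unirational; while if both $G_{\uni}$ and $H_{\uni}$ are nontrivial, then $\dim(G/G_{\uni}) < \dim G$, and since $G$ (resp.\ $H$) is ext-unirational if and only if $G/G_{\uni}$ (resp.\ $H/H_{\uni}$) is, the induction closes on the pair $(G/G_{\uni}, H/H_{\uni})$.

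Because the formation of the maximal unirational subgroup, and hence ext-unirationality, are insensitive to separable extension, I first reduce to the case that $K$ is separably closed (birationality also base-changes to $K_s$). Write $U \subseteq G$ for the locus of definition of $\phi$ and set $f := \pi_H \circ \phi \colon U \to H/H_{\uni}$, where $\pi_H$ is the quotient map. To descend $f$ through the flat quotient $\pi_G \colon G \to G/G_{\uni}$, it suffices, by descent along $\pi_G$, to check that $f$ is constant along the $G_{\uni}$-cosets meeting $U$. Making this precise, I introduce the action map $a \colon U \times G_{\uni} \to G$, $(u,x) \mapsto ux$, put $V := a^{-1}(U)$, and reduce to showing that the two maps $V \to H/H_{\uni}$ given by $(u,x) \mapsto f(u)$ and $(u,x) \mapsto \pi_H(\phi(ux))$ agree. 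This may be verified fiberwise, i.e.\ after restriction to $V_{u_0} := V \cap (\{u_0\} \times G_{\uni})$ for each $u_0 \in U(K)$, which is an open subscheme of $G_{\uni}$ and hence unirational.

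The crux --- and the step I expect to be the main obstacle --- is therefore to show that for each such $u_0$ the composite $V_{u_0} \to H \xrightarrow{\pi_H} H/H_{\uni}$ is constant. Here $\phi(V_{u_0})$ is a unirational subscheme of $H$, being a dominant image of the unirational scheme $V_{u_0}$, so the claim follows from the general fact that a unirational subscheme $Z \subseteq H$ is contained in a single coset of $H_{\uni}$. To prove this last fact I use that $K$ is separably closed to pick a rational point $z_0 \in Z(K)$; then $z_0^{-1}Z$ is a unirational subscheme through the identity, so the $K$-subgroup it generates is unirational and therefore contained in the maximal unirational subgroup $H_{\uni}$. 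Consequently $Z \subseteq z_0 H_{\uni}$, and $\pi_H(Z)$ is a single point, as needed. This is the one place where genuine geometric input (rather than bookkeeping) enters, and getting the descent set-up to match the coset structure cleanly is the delicate part.

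Having descended $f$, I obtain $\overline{\phi}\colon G/G_{\uni} \dashrightarrow H/H_{\uni}$, and running the identical argument with the inverse $\psi$ yields $\overline{\psi}\colon H/H_{\uni} \dashrightarrow G/G_{\uni}$; the two compositions are generically the identity, so $\overline{\phi}$ is birational. Finally, comparing dimensions via $\dim G_{\uni} = \dim G - \dim(G/G_{\uni}) = \dim H - \dim(H/H_{\uni}) = \dim H_{\uni}$ (using that $\phi$ and $\overline{\phi}$ are birational) shows, since the $\uni$-subgroups are smooth connected, that $G_{\uni}$ is trivial exactly when $H_{\uni}$ is. Both claims of the first paragraph now hold, and the dimension induction completes the proof.
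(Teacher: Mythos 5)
Your proposal is correct and follows essentially the same route as the paper: descend $\phi$ through the flat quotient $\pi_G\colon G \to G/G_{\uni}$ via the action map $a\colon U\times G_{\uni}\to G$, check constancy on the unirational fibers $V_{u_0}$ using that unirational subschemes of $H$ lie in a single $H_{\uni}$-coset, conclude $G/G_{\uni}$ is birational to $H/H_{\uni}$ with $G_{\uni}$ trivial iff $H_{\uni}$ is, and finish by dimension induction. Your treatment is if anything slightly more explicit than the paper's at the coset-containment step and the base case of the induction.
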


If $T \subset G$ is a torus in a connected linear algebraic $K$-group $G$, then $G$ is unirational if and only if $Z_G(T)$ is \cite[Prop.\,7.12]{rosrigidity}. Applying this with $T$ a maximal torus, and using Lemma \ref{extunitor}, one sees that unirationality of $G$ is equivalent to that of the unipotent group $Z_G(T)/T$, whence of its maximal wound quotient. The same statement holds true for ext-unirationality, thanks to the following analogous result.

\begin{proposition}
\label{Gextunir}
For a connected linear algebraic $K$-group $G$, the following are equivalent:
\begin{itemize}
\item[(i)] $G$ is ext-unirational.
\item[(ii)] $Z_G(T)$ is ext-unirational for every $K$-torus $T \subset G$.
\item[(iii)] $Z_G(T)$ is ext-unirational for some $K$-torus $T \subset G$.
\end{itemize}
\end{proposition}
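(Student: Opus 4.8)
The plan is to prove Proposition \ref{Gextunir} by establishing the chain of implications (i) $\Rightarrow$ (ii) $\Rightarrow$ (iii) $\Rightarrow$ (i), where the first two arrows are comparatively easy and the content lies in (iii) $\Rightarrow$ (i). Since ext-unirationality is insensitive to separable extension of the ground field, I would first reduce to the case in which $K$ is separably closed; this permits me to choose tori and work with $K$-points freely. Throughout, I will lean on the fact, recalled just before the statement, that unirationality of $G$ is equivalent to that of $Z_G(T)$ for a maximal torus $T$, which I would like to upgrade to the ext-unirational setting.

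For (i) $\Rightarrow$ (ii): given a $K$-torus $T \subset G$, I want to show $Z_G(T)$ inherits ext-unirationality from $G$. The natural tool is Proposition \ref{extunirunipptorcom}, which reduces ext-unirationality of a smooth connected affine group to that of its maximal $p$-torsion commutative quotient (or in characteristic $0$ simply to affineness, where there is nothing to prove). Since $Z_G(T)$ is a closed subgroup of $G$, affineness is automatic; in characteristic $p$ I would argue that the abelianized $p$-torsion quotient of $Z_G(T)$ receives a suitable relationship from that of $G$, or alternatively reduce everything to the unipotent group $Z_G(T)/T$ and compare it with $G/G_t$ in the notation of the proof of Proposition \ref{extunirunipptorcom}. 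The implication (ii) $\Rightarrow$ (iii) is trivial, taking any single torus (for instance $T = 1$, in which case $Z_G(T) = G$, or a maximal torus).

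The main work is (iii) $\Rightarrow$ (i): assuming $Z_G(T)$ is ext-unirational for some torus $T \subset G$, I want to conclude that $G$ is. The cleanest route is to pass to a maximal torus. If $T$ is contained in a maximal $K$-torus $S$ (which exists since $K$ is separably closed), then $Z_G(S) \subset Z_G(T)$, and ext-unirationality is inherited by this inclusion by the already-established (i) $\Rightarrow$ (ii) applied inside $Z_G(T)$ — here I use that $S/T$ is a torus inside $Z_G(T)$ and that centralizing $S$ in $G$ agrees with centralizing $S/T$ in $Z_G(T)$. Thus I may assume $T = S$ is maximal. Now $Z_G(S)$ is a connected linear algebraic group containing the central maximal torus $S$, and $Z_G(S)/S$ is unipotent. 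Ext-unirationality of $Z_G(S)$ gives, via the torus-extension Lemma \ref{extunitor} and Proposition \ref{extunirunipptorcom}, ext-unirationality of the unipotent quotient $Z_G(S)/S$. Finally I must transfer this back to $G$ itself.

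The hard part will be this last transfer from $Z_G(S)$ to $G$. For ordinary unirationality one invokes \cite[Prop.\,7.12]{rosrigidity} directly, but for ext-unirationality no such black box is quoted, so I expect to need a substitute. The natural strategy is to use the big-cell/open-Bruhat-type birational structure: over a separably closed field, $G$ is birationally equivalent to a product involving $Z_G(S)$ and unipotent root groups, the latter being split (hence unirational). I would then invoke Proposition \ref{extunibirinvt}, which says ext-unirationality is a birational invariant among smooth connected groups, to conclude that $G$ is ext-unirational once $Z_G(S)$ is. Making precise the birational decomposition of $G$ in terms of $Z_G(S)$ and split unipotent pieces — and checking that the relevant open subscheme is a product that is birational to $Z_G(S)$ times an affine space — is the step I anticipate requiring the most care, since it is exactly the geometric input that replaces the cited unirationality criterion.
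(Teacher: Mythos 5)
Your treatment of (iii) $\Rightarrow$ (i) lands on the same key mechanism as the paper: the open immersion $U^- \times Z_G(T) \times U^+ \hookrightarrow G$ together with Proposition \ref{extunibirinvt} (birational invariance of ext-unirationality), which forces $G$ and $Z_G(T)$ to be ext-unirational simultaneously. The step you flag as ``requiring the most care'' is in fact a direct citation: for a generic cocharacter $\lambda$ of $T$ one has $Z_G(T) = Z_G(\lambda)$, and the dynamic method gives the open cell \cite[Lem.\,2.1.5, Props.\,2.1.8(2)(3), 2.1.10]{cgp}. Crucially, this works for an \emph{arbitrary} torus $T \subset G$, not only a maximal one, so it proves in one stroke that $G$ is ext-unirational if and only if $Z_G(T)$ is, for every $T$; all three conditions are then immediately equivalent and your detour through a maximal torus containing $T$ is unnecessary.

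The genuine gap is in your (i) $\Rightarrow$ (ii), on which your whole chain depends (you reuse it inside $Z_G(T)$ to pass from $T$ to a maximal $S$). Your proposed route is to apply Proposition \ref{extunirunipptorcom} and ``argue that the abelianized $p$-torsion quotient of $Z_G(T)$ receives a suitable relationship from that of $G$.'' But $Z_G(T)$ is a \emph{subgroup} of $G$, not a quotient, so there is no natural surjection $G^{\ab}/[p]G^{\ab} \twoheadrightarrow Z_G(T)^{\ab}/[p]Z_G(T)^{\ab}$, and the inclusion $Z_G(T) \hookrightarrow G$ induces nothing useful on abelianizations; the same objection applies to your alternative of comparing $Z_G(T)/T$ with $G/G_t$. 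Ext-unirationality, like unirationality, is inherited by quotients but not by arbitrary closed subgroups, so some geometric input relating $Z_G(T)$ to $G$ is unavoidable here. The fix is already in your own toolkit: run the open-cell-plus-birational-invariance argument for the given (possibly non-maximal) torus $T$ via a generic cocharacter, and both directions of the equivalence fall out at once.
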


\begin{proof}
We may assume that $K$ is separably closed. Let $T \subset G$ be a torus. For a generic cocharacter $\lambda\colon \Gm \rightarrow T$ (lying in the complement of the union of finitely many hyperplanes in the cocharacter lattice), one has $Z_G(T) = Z_G(\lambda)$. Then there are (split) unipotent $K$-subgroups $U^+, U^- \subset G$ such that the map $$U^- \times Z_G(T) \times U^+ \rightarrow G, \hspace{.3 in} (x, y, z) \mapsto xyz$$ is an open embedding \cite[Lem.\,2.1.5,Props.\,2.1.8(2)(3),2.1.10]{cgp}. By Proposition \ref{extunibirinvt}, $G$ is ext-unirational if and only if $U^- \times Z_G(T) \times U^+$ is, which in turn holds if and only if $Z_G(T)$ is.
\end{proof}

\section{Deducing unirationality from quotients}
\label{dedfromquotssec}

Corollary \ref{uniriffabis} says that, when $K$ has degree of imperfection $1$, unirationality of a group may be tested upon passage to a certain quotient of that group. In this section we will prove an analogue of this result for unipotent groups $U$ over fields of higher degree of imperfection. In particular, we will see that unirationality of $U^{\ab}$ implies unirationality of $U$ when $K$ has degree of imperfection $\leq 2$. We will also show that this result is optimal by giving, over every field of degree of imperfection $\geq 3$, an example of a wound unipotent group $U$ such that $U^{\ab}$ is $p$-torsion and unirational even though $U$ fails to be unirational (though $U$ must be ext-unirational by Proposition \ref{extunirunipptorcom}).

We first verify an analogous result for permawoundness in place of unirationality. For the definition of permawoundness, see either \cite[Def.\,1.2]{rospermawound} or the beginning of \S\ref{degimp1sec}. The proof below also makes reference to the notion of semiwoundness. Recall that a unipotent $K$-group scheme $U$ is called semiwound when it contains no copies of $\Ga$ over $K$. This notion exhibits similar properties to the closely related notion of woundness, which is merely semiwoundness plus smoothness and connectedness; see \cite[Appendix A]{rospermawound} for the basic properties of semiwound groups.

\begin{proposition}
\label{pmwdfollfrmquot}
A smooth unipotent $K$-group scheme $U$ is permawound if and only if $U^{\ab}/[p]U^{\ab}$ is.
\end{proposition}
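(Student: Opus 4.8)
The forward implication is immediate: $U^{\ab}/[p]U^{\ab}$ is a quotient of $U$, and permawoundness is inherited by quotients by definition. The content is the converse, so suppose $U^{\ab}/[p]U^{\ab}$ is permawound; I want to deduce the same for $U$. I may assume $U$ is connected (the case of perfect $K$ being trivial by \cite[Prop.\,5.2(i)]{rospermawound}, and permawound groups over imperfect fields being automatically connected), and, since permawoundness is insensitive to separable extension of the ground field for smooth connected unipotent groups (this follows from \cite[Props.\,6.7,\,6.9]{rospermawound}), I may also assume that $K$ is separably closed.

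The heart of the matter is the commutative case: if $V$ is a commutative smooth connected unipotent $K$-group with $V/[p]V$ permawound, then $V$ is permawound. I would prove this by induction on $\dim V$. A smooth connected unipotent group over a field of characteristic $p$ is killed by a power of $p$ (checked over $\overline{K}$), so if $V$ is not $p$-torsion there is a largest $n \geq 1$ with $[p^n]V \neq 0$, and then $[p^{n+1}]V = 0$. Put $\overline{V} := V/[p^n]V$. Since $[p^n]V$ is a nonzero image of a homomorphism out of the connected group $V$ it is positive-dimensional, and $\overline{V} \neq 0$ because $V$ is not $p$-torsion; thus $0 < \dim \overline{V} < \dim V$. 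As $[p^n]V \subset [p]V$ for $n \geq 1$, one has $\overline{V}/[p]\overline{V} = V/[p]V$, which is permawound, so $\overline{V}$ is permawound by induction. Now $[p]\colon V \to V$ kills $[p^n]V$ and hence factors through a map $\overline{V} \to V$ with image $[p]V$; thus $[p]V$, being a quotient of the permawound group $\overline{V}$, is permawound. Finally $1 \to [p]V \to V \to V/[p]V \to 1$ exhibits $V$ as an extension of the permawound groups $V/[p]V$ and $[p]V$, so $V$ is permawound by Proposition \ref{permawdexts}.

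To treat a general (possibly noncommutative) $U$, I would use its descending central series $U = \mathscr{D}_1 U \supseteq \mathscr{D}_2 U \supseteq \cdots \supseteq \mathscr{D}_{m+1}U = 1$, whose terms are smooth connected $K$-subgroups and whose successive quotients $\mathrm{gr}_i := \mathscr{D}_iU/\mathscr{D}_{i+1}U$ are commutative and unipotent. The top quotient is $\mathrm{gr}_1 = U^{\ab}$, and the commutative case applies to it (its hypothesis is exactly that $U^{\ab}/[p]U^{\ab}$ is permawound), so $\mathrm{gr}_1$ is permawound. I claim each $\mathrm{gr}_i$ is permawound, by induction on $i$. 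The commutator induces a bi-additive $K$-morphism $\mathrm{gr}_1 \times \mathrm{gr}_i \to \mathrm{gr}_{i+1}$ whose image generates $\mathrm{gr}_{i+1}$, because $\mathscr{D}_{i+1}U = [U, \mathscr{D}_iU]$. Choosing finitely many $\bar u_1, \dots, \bar u_N \in \mathrm{gr}_1(K)$ whose images generate (here I use that $K$ is separably closed, so rational points are Zariski dense), the map $(\mathrm{gr}_i)^N \to \mathrm{gr}_{i+1}$, $(x_j)_j \mapsto \sum_j [\bar u_j, x_j]$, is a surjective homomorphism, since $\mathrm{gr}_{i+1}$ is commutative and each $x \mapsto [\bar u_j, x]$ is a homomorphism. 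As finite products of permawound unipotent groups are permawound (Proposition \ref{permawdexts}) and quotients of permawound groups are permawound, $\mathrm{gr}_{i+1}$ is permawound. Hence all $\mathrm{gr}_i$ are permawound, and filtering $U$ by the $\mathscr{D}_iU$ and applying Proposition \ref{permawdexts} repeatedly shows that $U$ is permawound.

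The main obstacle is the inductive step for the graded pieces: realizing $\mathrm{gr}_{i+1}$ as a homomorphic image of a power of the permawound group $\mathrm{gr}_i$. This rests on the surjectivity of the graded commutator bracket onto $\mathrm{gr}_{i+1}$ and on having enough rational points to replace the merely bi-additive bracket by an honest homomorphism out of $(\mathrm{gr}_i)^N$, which is precisely why the reduction to separably closed $K$ — and the accompanying insensitivity of permawoundness to separable extension — is needed. Everything else reduces, via the commutative case, to repeated application of the extension-closure of permawoundness furnished by Proposition \ref{permawdexts}. This mirrors the structure of the proof of Proposition \ref{extunirunipptorcom}, with Proposition \ref{permawdexts} playing the role there played by the closure of ext-unirationality under extensions.
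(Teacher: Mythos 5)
Your proof is correct, and its commutative core coincides with the paper's: both exhibit $V$ as an extension of $V/[p]V$ by a permawound image of a lower-dimensional quotient and invoke Proposition \ref{permawdexts} (inducting on the exponent of $p$ killing $V$ rather than on $\dim V$ is immaterial). The genuine divergence is the noncommutative step. The paper passes to the maximal semiwound quotient and then proves the stronger structural fact that a semiwound $U$ satisfying the hypothesis is automatically \emph{commutative}, by descending the commutator to a map out of the square of a wound permawound group and invoking the rigidity results \cite[Cor.\,10.3, Th.\,1.9(ii)]{rosrigidity}. You avoid that input entirely by filtering $U$ by its lower central series: $\mathrm{gr}_1 = U^{\ab}$ is handled by the commutative case, and each $\mathrm{gr}_{i+1}$ is a quotient of a finite power of $\mathrm{gr}_i$ via the graded commutator pairing. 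Two points should be spelled out: the choice of finitely many $\bar u_j$ rests on the standard Noetherian argument (a finite set maximizing the dimension of the sum of the images of the homomorphisms $[\bar u_j,-]$ yields a closed subgroup containing every such image, hence, by density of $K_s$-points, the whole of $\mathrm{gr}_{i+1}$); and the reduction to $K=K_s$ needs insensitivity of permawoundness itself, not merely weak permawoundness, to separable algebraic extensions, so the citation of \cite[Props.\,6.7, 6.9]{rospermawound} should be replaced by the statement in \cite{rospermawound} asserting exactly that. Your route is more elementary, trading the rigidity theorem for the lower-central-series bootstrap, but it does not recover the paper's byproduct that semiwound groups satisfying the hypothesis are commutative.
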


\begin{proof}
All smooth unipotent groups are permawound over perfect fields \cite[Prop.\,5.2(i)]{rospermawound}, so we may assume that $K$ is imperfect. The only if direction follows from the fact that permawoundness is inherited by quotients, so we now concentrate on the converse. Let us first assume that $U$ is commutative (which, though we do not know it yet, must be the case a posteriori when $U$ is wound, as all wound permawound unipotent groups are commutative). The group $U$ is killed by $p^n$ for some $n \geq 0$, and we proceed by induction on $n$, the $n = 0$ case being trivial. So suppose that $n > 0$. The map $[p^{n-1}]\colon U \rightarrow U$ descends to a surjective map $U/[p]U \twoheadrightarrow [p^{n-1}]U$, hence $[p^{n-1}]U$ is permawound. If we let $\overline{U} := U/[p^{n-1}]U$, then $U/[p]U \twoheadrightarrow \overline{U}/[p]\overline{U}$, so the latter group is permawound, hence, by induction, so is $\overline{U}$, because it is killed by $p^{n-1}$. Thus $U$ an extension of the two permawound groups $\overline{U}$ and $[p^{n-1}]U$, hence is itself permawound by Proposition \ref{permawdexts}.

Now let $U$ be a semiwound unipotent group such that $U^{\ab}/[p]U^{\ab}$ is permawound. We will show that $U$ is commutative, hence permawound by the already-treated commutative case. First we note that $U$ must be connected, as otherwise $U$ would admit a nontrivial \'etale unipotent quotient, hence a nontrivial commutative $p$-torsion \'etale quotient, which would imply that $U^{\ab}/[p]U^{\ab}$ is disconnected, in violation of \cite[Prop.\,6.2]{rospermawound}. If $U = 1$ the assertion is immediate, so assume that $U \neq 1$. Then $U$ contains a nontrivial smooth connected central $K$-subgroup $U' \subset U$. By dimension induction, the quotient $\overline{U} := U/U'$ is permawound, as $U^{\ab}/[p]U^{\ab}$ surjects onto $\overline{U}^{\ab}/[p]\overline{U}^{\ab}$. The commutator map $U^2 \rightarrow U$ descends to a map $\overline{U}^2 \rightarrow U$, which further descends to a map $\overline{U}_w^2 \rightarrow U$, where $\overline{U}_w$ is the maximal wound quotient of $\overline{U}$. This map is constant by \cite[Cor.\,10.3,Th.\,1.9(ii)]{rosrigidity}, hence $U$ is commutative, as claimed.

Finally, let $U$ be an arbitrary smooth unipotent $K$-group scheme such that $U^{\ab}/[p]U^{\ab}$ is permawound, and let $U_s \trianglelefteq U$ be the maximal split $K$-subgroup. Then $U/U_s$ is permawound by the semiwound case, hence so is $U$, by Proposition \ref{permawdexts} and \cite[Prop.\,5.3]{rospermawound}.
\end{proof}

We will require the following result, of interest in its own right, that gives a finiteness criterion for permawoundness.

\begin{proposition}
\label{permawdfinite}
Let $K$ be a field of finite degree of imperfection. A smooth unipotent $K$-group scheme $U$ is permawound if and only if, for every commutative $p$-torsion wound unipotent $K$-group $V$, the group $\Hom_{K_s}(U, V)$ is finite.
\end{proposition}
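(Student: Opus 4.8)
The plan is to prove the two implications of this finiteness criterion separately, using the results already established about permawoundness and unirationality.

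First I would treat the forward direction (permawound $\Rightarrow$ finite Hom-groups). Suppose $U$ is permawound, and let $V$ be a commutative $p$-torsion wound unipotent $K$-group. Since $\Hom_{K_s}(U, V) = \Hom_{K_s}(U_{K_s}, V_{K_s})$ and permawoundness is insensitive to separable extension, I may replace $K$ by $K_s$ and assume $K$ is separably closed. Now, because $V$ is commutative and $p$-torsion, any homomorphism $U \to V$ factors through the maximal commutative $p$-torsion quotient $U^{\ab}/[p]U^{\ab}$, so $\Hom_K(U, V) = \Hom_K(U^{\ab}/[p]U^{\ab}, V)$. Thus I am reduced to showing that $\Hom_K(W, V)$ is finite for $W := U^{\ab}/[p]U^{\ab}$ a commutative $p$-torsion permawound group (note $W$ is permawound since permawoundness passes to quotients) and $V$ commutative $p$-torsion wound. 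Over a field of finite degree of imperfection, a commutative $p$-torsion permawound group is unirational (combining the permawoundness/unirationality facts available for such fields) and in particular finite-dimensional, while $V$ is wound; the finiteness of $\Hom$ between such groups should follow from the rigidity results of \cite{rosrigidity} together with the fact that $\Hom$ of two finite-dimensional vector-group-like objects given by $p$-polynomial equations is a finite-dimensional space of $p$-polynomial maps, of which only finitely many are actual homomorphisms once woundness and $p$-independence constraints are imposed.

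For the converse (finite Hom-groups $\Rightarrow$ permawound), I would argue by contraposition: assuming $U$ is not permawound, I must produce a commutative $p$-torsion wound $V$ with $\Hom_{K_s}(U, V)$ infinite. By Proposition \ref{pmwdfollfrmquot}, $U$ is permawound iff $U^{\ab}/[p]U^{\ab}$ is, and $\Hom_{K_s}(U, V) \supseteq \Hom_{K_s}(U^{\ab}/[p]U^{\ab}, V)$ in the relevant sense, so I may reduce to the case that $U$ itself is commutative and $p$-torsion but not permawound. Passing to $K_s$ and then to the maximal wound quotient (using that splitness contributes a factor $\Ga$, which maps into wound $V$ only trivially, so it does not affect the relevant Hom-group), I reduce to $U$ commutative $p$-torsion wound but not permawound. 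The failure of permawoundness then furnishes, by definition, an exact sequence $U \to E \to \Ga \to 1$ with $E$ containing no copy of $\Ga$; equivalently, there is a nonsplit extension witnessing non-permawoundness. The key is to extract from this an infinite family of distinct homomorphisms into some fixed wound $V$: a natural candidate is to take $V$ built from the groups $V_{n,\alpha}$ of Lemma \ref{descV_nalpha}, and exploit that a non-permawound commutative $p$-torsion wound group admits a one-parameter (indeed, positive-dimensional, hence infinite over $K_s$) family of such homomorphisms.

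The hard part will be the converse direction, specifically constructing the infinite family of homomorphisms from the mere failure of permawoundness. The forward direction is essentially a finiteness-of-$p$-polynomial-homomorphisms computation, controlled by the finite degree of imperfection hypothesis, and should go through cleanly once one reduces to the commutative $p$-torsion unirational-vs-wound setting. For the converse, the challenge is that ``not permawound'' is an existence statement about a single bad extension, and I need to upgrade it to an \emph{infinitude} of maps into a \emph{fixed} target. I expect the right tool is the structure theory of semiwound and permawound groups in \cite{rospermawound} together with \cite[Th.\,9.5]{rospermawound}: non-permawoundness should be detectable via a nonzero (hence, over $K_s$ and by homogeneity/scaling, positive-dimensional) space of homomorphisms into a universal wound target, and the finite degree of imperfection will ensure this target can be taken within a single $V$. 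Pinning down this universal target and verifying that the associated Hom-space is genuinely infinite, rather than merely nonzero, is where the real work lies.
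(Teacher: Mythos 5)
Your forward direction matches the paper's in outline: reduce via Proposition \ref{pmwdfollfrmquot} to the commutative $p$-torsion case, pass to $K_s$ and to the maximal semiwound quotient (split subgroups die in any map to a wound group), and then invoke the finiteness of homomorphisms from a unirational group into a wound one; the paper cites \cite[Prop.\,10.1]{rosrigidity} for exactly this. That half is fine, if somewhat loosely argued.

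The converse, however, has a genuine gap, and you essentially concede it yourself: you never produce the infinite family of homomorphisms into a fixed wound target, and the mechanism you gesture at is not the one that works. The difficulty is that non-permawoundness is the existence of a single bad extension $U \to E \to \Ga \to 1$, and there is no direct way to turn this into a positive-dimensional family of maps $U \to V_{n,\alpha}$; a non-permawound group need not admit \emph{any} nonzero homomorphism to a prescribed wound group "intrinsically." The paper's key idea is to go through $\Ext$ rather than $\Hom$: by the ubiquity theorem \cite[Th.\,1.4]{rospermawound}, a semiwound commutative $p$-torsion $U$ sits in an exact sequence $0 \to U \to W \to \Ga^n \to 0$ with $W$ wound, permawound, commutative and $p$-torsion, and $n>0$ precisely because $U$ is not permawound. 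Taking the fixed target to be the group $\mathscr{V}$ of \cite[Def.\,7.3]{rospermawound}, the long exact sequence gives
\[
\Hom(U,\mathscr{V}) \longrightarrow \Ext^1(\Ga^n,\mathscr{V}) \longrightarrow \Ext^1(W,\mathscr{V}),
\]
and one wins by showing the middle group is infinite (it is a nonzero $K$-vector space, nonzero because the defining presentation $0 \to \mathscr{V} \to \Ga^{d+1} \to \Ga \to 0$ does not split) while the right-hand group is finite (the Verschiebung defines an injection $\Ext^1(W,\mathscr{V}) \hookrightarrow \Hom(W,\mathscr{V})$ by \cite[Cor.\,8.3]{rospermawound}, and $\Hom(W,\mathscr{V})$ is finite by the already-proved forward direction applied to the permawound $W$). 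So the infinitude of $\Hom(U,\mathscr{V})$ is forced by a diagram chase, not constructed from an explicit family. Without this (or an equivalent) mechanism, your converse does not go through.
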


\begin{proof}
Thanks to Proposition \ref{pmwdfollfrmquot}, we may assume that $U$ is commutative and $p$-torsion. When $K$ is perfect, the only wound group is the trivial group, and every smooth unipotent $K$-group is permawound \cite[Prop.\,5.2(i)]{rospermawound}, so we may assume that $K$ is imperfect. If $U_s \trianglelefteq U$ denotes the maximal split $K$-subgroup, then $U$ is permawound if and only if $U/U_s$ is, by Proposition \ref{permawdexts} and \cite[Prop.\,5.3]{rospermawound}. Since $U_s$ is killed under any $K_s$-homomorphism from $U$ into a wound group, we may assume $U$ is semiwound (that is, contains no copy of $\Ga$). The only if direction now follows from \cite[Prop.\,10.1]{rosrigidity}, plus the connectedness of $U$ (to ensure woundness of the maximal wound quotient) \cite[Prop.\,6.2]{rospermawound}.

Now suppose that $K$ has finite nonzero degree of imperfection, and assume that $U$ is not permawound. We will show that $\Hom_{K_s}(U, \mathscr{V})$ is infinite, where (by abuse of notation) $\mathscr{V}$ is any $K$-form of the $K_s$-group which is denoted by $\mathscr{V}$ in \cite[Def.\,7.3]{rospermawound}. By \cite[Th.\,1.4]{rospermawound}, we have for some $n > 0$ ($>0$ because $U$ is not permawound) an exact sequence $$0 \longrightarrow U \longrightarrow W \longrightarrow \Ga^n \longrightarrow 0$$ with $W$ wound, permawound, commutative, and $p$-torsion. We will show that $\Ext^1_{K_s}(W, \mathscr{V})$ is finite but $\Ext^1_{K_s}(\Ga, \mathscr{V})$ is infinite, which will imply that $\Hom_{K_s}(U, \mathscr{V})$ is infinite as well. We may assume that $K= K_s$ \cite[Cor.\,10.5]{rosrigidity}.

First we prove the finiteness of $\Ext^1(W, \mathscr{V})$. We have a homomorphism $\phi\colon \Ext^1(W, \mathscr{V}) \rightarrow \Hom(W, \mathscr{V})$ defined as follows: Given an extension $E$ in the former group, the Verschiebung endomorphism of $E$ descends to a map $W \rightarrow \mathscr{V}$ from the quotient $W$ to the subgroup $\mathscr{V}$. Then $\ker(\phi)$ consists of the extensions with trivial Verschiebung, and this is a trivial group by \cite[Cor.\,8.3]{rospermawound}. Thus the desired finiteness follows from the finiteness of $\Hom(W, \mathscr{V})$.

It remains to prove the infinitude of $\Ext^1(\Ga, \mathscr{V})$. This group is a $K$-vector space via the action of $K$ on $\Ga$, so it is equivalent to show that it is nonzero. By \cite[Prop.\,B.1.13]{cgp}, we have an exact sequence
\begin{equation}
\label{permawdfinitepfeqn3}
0 \longrightarrow \mathscr{V} \longrightarrow \Ga^{d+1} \xlongrightarrow{F} \Ga \longrightarrow 0,
\end{equation}
for a suitable $p$-polynomial $F$, where $d := \mathrm{dim}(\mathscr{V})$. The sequence (\ref{permawdfinitepfeqn3}) does not split, because the wound $\mathscr{V}$ cannot be a quotient of $\Ga^{d+1}$. Thus this sequence yields a nonzero element of $\Ext^1(\Ga, \mathscr{V})$, as required.
\end{proof}

\begin{proposition}
\label{extunirgpspmwdunir}
If $K$ is imperfect, then every extension of a unirational $K$-group scheme by a permawound unipotent $K$-group $U$ is unirational.
\end{proposition}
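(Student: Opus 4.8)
The plan is to deduce unirationality of $G$ from the permanence properties of permawoundness together with the generation result of Lemma \ref{unirimpcertmaps}, using the fact that permawound groups are unirational \cite[Th.\,1.9(i)]{rosrigidity}. Throughout I may assume $K$ is separably closed, since both unirationality and permawoundness are insensitive to separable extension. Writing the extension as $1 \to U \to G \xrightarrow{\pi} Q \to 1$, I first reduce to the case that $G$ (hence $Q$) is unipotent. Choosing a maximal torus $T \subset G$, which maps isomorphically to a maximal torus $T'' := \pi(T) \subset Q$ (as $T \cap U = 1$), one has $G$ unirational if and only if $Z_G(T)$ is \cite[Prop.\,7.12]{rosrigidity}, equivalently if and only if $Z_G(T)/T$ is by Lemma \ref{extunitor}. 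There results an exact sequence $1 \to Z_U(T) \to Z_G(T)/T \to Z_Q(T'')/T'' \to 1$ of unipotent groups, whose quotient is unirational. For this reduction to stay within our hypotheses I must check that the fixed subgroup $Z_U(T) = U^T$ is again permawound: in the commutative case this follows from the $T$-weight-space retraction $U \twoheadrightarrow U^T$ together with the finiteness criterion of Proposition \ref{permawdfinite}, and the general case is then reduced to the commutative one via Proposition \ref{pmwdfollfrmquot}.

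Having arranged that $G, Q, U$ are unipotent, I next reduce to $Q$ wound. Let $Q_s \trianglelefteq Q$ be the maximal split $K$-subgroup and set $N := \pi^{-1}(Q_s)$, an extension of the split group $Q_s$ by $U$. Since split unipotent groups are permawound and permawoundness is stable under extension (Proposition \ref{permawdexts}), the group $N$ is permawound. Replacing $(U, Q)$ by $(N, Q/Q_s)$, I may therefore assume that $Q$ is wound and unirational. A further reduction to the case that $Q$ is commutative and $p$-torsion — so that Lemma \ref{unirimpcertmaps} applies to $Q$ — is carried out by dimension induction along a central series of $Q$; this is the first technical point requiring care, since subgroups of unirational groups need not be unirational, so the inductive bookkeeping must be phrased in terms of the quotients $G/\pi^{-1}(R)$ for suitable unirational $R \subset Q$ arising from the generation below.

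For the heart of the argument, suppose now $Q$ is commutative, $p$-torsion, wound, and unirational over the separably closed field $K$. By Lemma \ref{unirimpcertmaps} there are homomorphisms $\phi_i \colon V_{n_i, \alpha_i} \to Q$ whose images generate $Q$. Pulling back the extension along $\phi_i$ gives a class $\phi_i^*[G] \in \Ext^1(V_{n_i,\alpha_i}, U)$; a homomorphic lift $V_{n_i,\alpha_i} \to G$ of $\phi_i$ exists precisely when this class vanishes. The key claim is that, after precomposing with one of the surjections $V_{m,\alpha_i} \twoheadrightarrow V_{n_i,\alpha_i}$ constructed in the proof of Lemma \ref{mapVnalphtoVaalph} (whose kernels are powers of $\R_{K^{1/p}/K}(\alpha_p)$), the pullback of $\phi_i^*[G]$ dies — equivalently, $\varinjlim_m \Ext^1(V_{m,\alpha_i}, U) = 0$. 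Granting this, each $\phi_i$ lifts along a cover to a homomorphism $V_{m,\alpha_i} \to G$; its image is unirational, so lies in $G_{\uni}$, and these images generate a subgroup of $G_{\uni}$ surjecting onto $Q$. Since $U$ is permawound, hence unirational, $U \subseteq G_{\uni}$; combined with $G_{\uni} \twoheadrightarrow Q$ this forces $G_{\uni} = G$, i.e. $G$ is unirational.

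The decisive obstacle is exactly the vanishing of $\varinjlim_m \Ext^1(V_{m,\alpha}, U)$, and this is where permawoundness of $U$ — as opposed to mere unirationality — must be used. Indeed, the groups $V_{m,\alpha}$ are permawound only over $\F_p(\alpha)$, a field of degree of imperfection $1$ (cf.\ the remark following Lemma \ref{descV_nalpha}), and \emph{not} over $K$ in general, which is precisely why one cannot simply invoke Proposition \ref{permawdexts} to conclude that $G$ is permawound, and why Theorem \ref{extunirdegimp1} does not apply. I expect this step to draw on the Verschiebung and $\Ext$-finiteness machinery underlying Propositions \ref{permawdfinite} and \ref{pmwdfollfrmquot}, together with \cite[Cor.\,8.3]{rospermawound}, to control the transition maps in the colimit; verifying that the obstruction is annihilated along the tower of covers $V_{m,\alpha} \to V_{n,\alpha}$ is the substantive content of the proposition.
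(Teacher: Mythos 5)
Your proposal is not a complete proof: its central step is left as a conjecture. Everything is reduced to the vanishing of $\varinjlim_m \Ext^1(V_{m,\alpha}, U)$, i.e.\ to the claim that the obstruction to lifting each generator $\phi_i\colon V_{n_i,\alpha_i} \to Q$ to a \emph{homomorphism} into $G$ dies after pullback along the covers $V_{m,\alpha} \twoheadrightarrow V_{n,\alpha}$ --- and at that point you write only that you ``expect'' this to follow from the Verschiebung and $\Ext$-finiteness machinery. But that machinery (\cite[Cor.\,8.3]{rospermawound}, Proposition \ref{permawdfinite}) applies when the source group is permawound over $K$, and, as you note yourself, the $V_{m,\alpha}$ are permawound only over $\F_p(\alpha)$, not over $K$. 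So the decisive claim is unproven; moreover it is stronger than what is needed, since unirationality of $G$ does not require homomorphic lifts of the $\phi_i$, only a dominant map from a rational variety. Your preliminary reductions also carry unverified burdens of their own: permawoundness of $Z_U(T)$ in the reduction to unipotent $G$, and the ``inductive bookkeeping'' you flag in the reduction to commutative $p$-torsion $Q$.

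The paper's proof short-circuits all of this by working with torsors rather than group extensions. After reducing to $K$ separably closed, disposing of the non-semiwound case via ${\rm H}^1(X,\Ga)=0$, and disposing of the infinite-degree-of-imperfection case via \cite[Prop.\,6.3]{rospermawound}, it treats $G \to Q$ purely as a $U$-torsor over the unirational scheme $Q$. The rigidity theorem for permawound groups \cite[Th.\,1.5]{rospermawound} filters $U$ with graded pieces $\R_{K^{1/p}/K}(\alpha_p)$ or $\mathscr{V}$, so by an easy induction one only needs that a torsor under either of these two groups over a unirational $K$-scheme is again unirational, which is \cite[Prop.\,8.6]{rosrigidity}. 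If you wish to salvage your approach, the missing ingredient is precisely a substitute for that torsor statement; as written, your proposal correctly identifies where permawoundness (rather than mere unirationality) of $U$ must enter, but does not supply the argument.
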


\begin{proof}
We may assume that $K$ is separably closed. If $U$ is not semiwound, then we may use the fact that $\mathrm{H}^1(X, \Ga) = 0$ for every affine scheme $X$ and dimension induction to conclude. We therefore assume that $U$ is semiwound. If $K$ has infinite degree of imperfection, then the assertion is immediate, thanks to \cite[Prop.\,6.3]{rospermawound}. Thus we may assume that $K$ has finite degree of imperfection. Because permawound groups are unirational \cite[Th.\,1.9]{rosrigidity}, we may use the rigidity property of permawound groups \cite[Th.\,1.5]{rospermawound} and an easy induction to reduce to showing that any torsor for the $K$-group $V$ over a unirational $K$-scheme $X$ is also unirational, where $V$ is either $\R_{K^{1/p}/K}(\alpha_p)$ or $\mathscr{V}$. But this assertion follows from \cite[Prop.\,8.6]{rosrigidity}.
\end{proof}

The proof of the following lemma uses the notion of the restricted moduli space of pointed morphisms $\calMor((X,x), (G,1))^+$ from a $K$-scheme into a $K$-group scheme. This is the functor $$\{\mbox{geometrically reduced $K$-schemes}\} \rightarrow \{\mbox{groups}\}$$ that sends a test scheme $T$ to the space of pointed $T$-morphisms $(X \times_K T, x_T) \rightarrow (G \times_K T, 1_T)$. It turns out that it is represented by a smooth unipotent $K$-group scheme when $X$ is geometrically reduced of finite type and $G$ is wound unipotent, and in fact exhibits the somewhat stronger property that the corresponding scheme may be regarded as a subfunctor of the above morphism functor considered on the category of {\em all} $K$-schemes \cite[Th.\,4.3]{rosmodulispaces}.

\begin{lemma}
\label{finmultaddmaps}
Let $d, n, r \geq 0$ be integers.
\begin{itemize}
\item[(i)] There is a constant $C = C(d, n, r) > 0$ with the following property: Let $K$ be a field of characteristic $p$ and degree of imperfection $r$, and let $U \simeq \{F = 0\}$ with $F$ a reduced $p$-polynomial over $K$ of degree $d$. Then for every $r$-tuple of divisors $D_1,\dots, D_r \subset \P^1_K$ with $\sum_i \mathrm{deg}(D_i) \leq n$, there are $\leq C$ multi-additive maps $\prod_{i=1}^r \R_{D_i/K}(\Gm) \rightarrow U$.
\item[(ii)] If $U$ is wound unipotent over a field $K$ of degree of imperfection $r$, then there is a constant $C = C(U, n) > 0$ such that, for every $r$-tuple of divisors $D_1,\dots, D_r \subset \P^1_K$ with $\sum_i \mathrm{deg}(D_i) \leq n$, there are $\leq C$ multi-additive maps $\prod_{i=1}^r \R_{D_i/K}(\Gm) \rightarrow U$.
\end{itemize}
\end{lemma}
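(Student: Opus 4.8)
The plan is to prove (i) first --- by a combination of reductions, a degree bound coming from the reducedness of $F$, and a B\'ezout-type count --- and then to deduce (ii) by d\'evissage to the commutative $p$-torsion case handled in (i). For (i), since the number of maps can only grow under separable extension and the degree of imperfection is preserved, I may assume $K = K_s$. Writing $D_i = \coprod_j\{x_{ij}\}$ as a union of closed points (replacing $D_i$ by the reduced divisor of the same support only enlarges the count), one has $\R_{D_i/K}(\Gm) \simeq \prod_j \R_{K(x_{ij})/K}(\Gm)$, and each $K(x_{ij})/K$ is a primitive purely inseparable extension $K(\alpha_{ij}^{1/p^{m_{ij}}})$ with $p^{m_{ij}} = [K(x_{ij}) : K] \le \mathrm{deg}(D_i)$. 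As $U$ is commutative, a multi-additive map out of $\prod_i\prod_j \R_{K(x_{ij})/K}(\Gm)$ is the same as an independent family, indexed by tuples $(j_i)_i$, of multi-additive maps out of $\prod_i \R_{K(x_{ij_i})/K}(\Gm)$; the number of such tuples is at most $\prod_i \mathrm{deg}(D_i)$, bounded in terms of $n$ and $r$, so the total count is controlled once each per-tuple count is. Finally, since $\Hom(\Gm, U) = 0$, such a map kills the maximal torus $\Gm \subset \R_{L_i/K}(\Gm)$ in each argument and hence factors through $\prod_i W_i$, where $W_i := \R_{L_i/K}(\Gm)/\Gm$ is wound of dimension $\le n-1$ (the unipotent quotients appearing in Lemmas \ref{descV_nalpha} and \ref{mapVnalphtoVaalph}). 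It therefore suffices to bound, by a constant depending only on $d,n,r$, the number of multi-additive maps $\prod_{i=1}^r W_i \to U$.

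Next I would realize these maps scheme-theoretically and bound their complexity. Using the restricted morphism-space machinery recalled before the statement, the multi-additive maps $\prod_i W_i \to U$ form the $K$-points of a closed $K$-subscheme $\calH$ of the associated iterated space of pointed morphisms. Writing a map $\phi$ in the coordinates of the embedding $U = \{F = 0\} \subset \Ga^N$, each coordinate of $\phi$ is a multi-$p$-polynomial in the coordinates of the $W_i$. The crucial point is that reducedness of $F$ bounds the degree of $\phi$: exactly as in Examples \ref{extwdbywdnotunirex} and \ref{unirnotinherexts}, a coordinate of too large a degree would, upon comparing top-degree coefficients in the identity $F \circ \phi = 0$, produce a nontrivial $K$-zero of the reduced principal part of $F$, which is impossible. (Reducedness over a field of degree of imperfection $r$ likewise bounds the number of variables $N$ in terms of $d$ and $r$.) Consequently $\calH$ embeds into the affine space of coefficient-tuples of multi-$p$-polynomials of bounded degree in boundedly many variables, cut out there by the bounded-degree polynomial equations expressing $F \circ \phi = 0$; all of these bounds depend only on $d,n,r$.

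The main obstacle is to show that $\calH$ is finite, i.e. that no positive-dimensional family of multi-additive maps exists. Here one argues by comparing the successive leading coefficients in $F\circ\phi = 0$, as in the two Examples cited above. The relations obtained are $p$-semilinear, of the shape $\sum_s \lambda_s c_s^p = 0$ with the $\lambda_s$ built from the data $\alpha_i$ and the coefficients of $F$; the hypothesis that $K$ has degree of imperfection \emph{exactly} $r$, matching the number of factors, is what makes any $r+1$ of the governing monomials $p$-dependent while forcing the $c_s$ attached to a maximal $p$-independent subfamily to vanish. A descending induction on degree then pins each remaining free coefficient into a finite set --- the Frobenius-fixed phenomenon $a^p = a \Rightarrow a \in \F_p$ that already governs, for instance, the endomorphisms of $V_{1,\lambda}$ --- so $\mathrm{dim}(\calH) = 0$. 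This is the analogue, with explicit bounds, of the finiteness in Proposition \ref{permawdfinite}, and it is the step I expect to require the most care. Once $\calH$ is a finite subscheme of a bounded-dimensional affine space defined by equations of bounded degree, B\'ezout gives $\#\calH(K) \le \#\calH(\overline{K}) \le \mathrm{deg}(\calH) \le C(d,n,r)$, a bound independent of $K$ and $F$; combined with the bounded number of tuples $(j_i)_i$, this proves (i).

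For (ii), with $U$ wound unipotent and the constant now permitted to depend on $U$, I would reduce to (i) by d\'evissage. Choose a filtration $U = U^{(0)} \supseteq \dots \supseteq U^{(\ell)} = 1$ by normal $K$-subgroups whose successive quotients are commutative $p$-torsion wound groups (refine the lower central series by the maps $[p]$; subquotients of wound groups are wound), and set $Z := U^{(\ell-1)}$, a nontrivial central commutative $p$-torsion wound subgroup. Given a multi-additive map $\phi$, its image $\overline{\phi}$ in $U/Z$ is multi-additive and, by induction on $\mathrm{dim}(U)$, takes one of at most $C(U/Z,n)$ values; and for fixed $\overline{\phi}$, any two lifts $\phi, \phi'$ satisfy that $\delta(x) := \phi(x)\phi'(x)^{-1}$ lies in $Z$ and is itself multi-additive, precisely because $Z$ is central. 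Hence each nonempty fiber of $\phi \mapsto \overline{\phi}$ injects into the set of multi-additive maps $\prod_i \R_{D_i/K}(\Gm) \to Z$, of which there are at most $C(Z,n)$ by the proof of (i) applied to $Z$. Therefore the number of multi-additive maps into $U$ is at most $C(U/Z,n)\cdot C(Z,n)$, and the induction closes, yielding a bound $C(U,n)$ and proving (ii).
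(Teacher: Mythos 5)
Your peripheral reductions are sound and largely parallel the paper's: the dévissage of (ii) to (i) via a central filtration with commutative $p$-torsion wound graded pieces is exactly what the paper intends (citing \cite[Prop.\,B.3.2]{cgp}; note, though, that your parenthetical ``subquotients of wound groups are wound'' is false as a general principle --- you need the cited result, not that slogan), and the decomposition of the $D_i$ into primitive purely inseparable points followed by factoring through the quotients $\R_{L_i/K}(\Gm)/\Gm$ matches the paper's reduction to the generalized Jacobians via \cite[Lem.\,5.4]{rosrigidity} and \cite[Th.\,6.7]{rosmodulispaces}. Your Bézout idea for making the count uniform in $K$ and the $D_i$ is a genuinely different (and in principle viable) device from the paper's, which instead descends the whole computation to the ``universal'' field $\F_p(\mu_1,\dots,\mu_r)$ with a fixed $p$-basis.

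The genuine gap is the finiteness of the set of multi-additive maps $\prod_i W_i \rightarrow U$, which is the entire content of the lemma, and which you explicitly defer (``the step I expect to require the most care''). The sketch you give --- compare leading coefficients in $F\circ\phi = 0$, extract $p$-semilinear relations, and invoke a ``Frobenius-fixed'' descending induction --- is a heuristic, not an argument: the coordinates of a multi-additive map into $\Ga$ have a priori unbounded degree, the elimination of high powers via the defining equations of the $W_i$ produces relations whose structure depends on all the $\alpha_{ij}$ simultaneously, and nothing in your sketch explains why the solution set is zero-dimensional rather than, say, a positive-dimensional family (note that nonzero multi-additive maps genuinely exist, e.g.\ the map $b$ of Example \ref{Uabunirex}, so the finiteness is not a degeneration statement). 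The paper's proof supplies a mechanism you do not have: it first uses ubiquity and rigidity of permawound groups to reduce to $U = \mathscr{V}$ or $\R_{K^{1/p}/K}(\alpha_p)$, then runs an induction on the degree of imperfection $r$ by viewing a multi-additive map as a map $\prod_{i>1} X_i \rightarrow M$ into the morphism space $M = \calMor((\P^1\setminus\lambda_1^{1/p^{n_1}},\infty),(\mathscr{V},0))^+$, base-changing to $L = K(\lambda_1^{1/p^\infty})$ (degree of imperfection $r-1$), and invoking \cite[Lem.\,5.3]{rosrigidity} to produce a totally nonsmooth $K$-subgroup $N \subset M$ containing $(M_L)_s$, so that a map with trivial image in $M_L/(M_L)_s$ lands in $N$ and hence vanishes. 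This is where the hypothesis that the number of factors equals the degree of imperfection actually does its work; your proposal gestures at the same numerology but provides no substitute for this step, so the proof is incomplete at its core.
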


\begin{proof}
Assertion (ii) follows from (i), since $U$ admits a filtration by commutative $p$-torsion wound unipotent groups by \cite[Prop.\,B.3.2]{cgp}, and every such group is of the form $\{F = 0\}$ for some reduced $p$-polynomial $F$ \cite[Prop.\,B.1.13]{cgp}. (We may assume that $K$ is infinite, since otherwise $r=0$.) So we concentrate on (i). We may assume that $K$ is separably closed. By \cite[Lem.\,5.4]{rosrigidity}, one may write $D_i$ as a finite disjoint union of $K$-schemes of the form $\mathrm{Spec}(K(\alpha_{ij}^{1/p^{n_{ij}}}))$ with $\alpha_{ij} \in K$, hence $\R_{D_i/K}(\Gm) \simeq \prod_j \R_{K(\alpha_{ij}^{1/p^{n_{ij}}})/K}(\Gm)$, so we are reduced to the case in which $D_i = \mathrm{Spec}(K(\lambda_i^{1/p^{n_i}}))$ for all $i$.

The group $G_i := \R_{K(\lambda_i^{1/p^{n_i}})}(\Gm)/\Gm$ is the so-called generalized Jacobian of the pair $(\P^1, D_i)$, where $D_i \subset \P^1$ denotes the reduced divisor with support $\lambda_i^{1/p^{n_i}}$. For an explanation of generalized Jacobians, see, for instance, the discussion beginning in the second paragraph of \cite[\S2]{rosmodulispaces}. Let $X_i := \P^1\backslash\{\lambda_i^{1/p^{n_i}}\}$. By \cite[Th.\,6.7]{rosmodulispaces}, the assertion of the lemma is equivalent to showing that there is $C = C(d, r, n_1, \dots, n_r)$ such that there are $\leq C$ maps $\prod_{i=1}^r X_i \rightarrow U$ that vanish whenever any of the coordinates is set to $\infty$.

We proceed by induction on $r$, the case $r = 0$ being trivial. Suppose that $r > 0$. By the ubiquity property of permawound groups \cite[Th.\,1.4]{rospermawound}, there is an exact sequence $$0 \longrightarrow U \longrightarrow W \longrightarrow \Ga^m \longrightarrow 0$$ with $W$ permawound. We require a slightly more precise result, namely, that $m$ may be bounded depending only on $r$ and $d$. (Note that $p = \mathrm{char}(K)$ is determined by $d$.) This follows from the proof of ubiquity. In particular, $\dim(W)$ may be bounded depending only on $r$ and $d$. By the rigidity property of permawound groups \cite[Th.\,1.5]{rospermawound}, together with the fact that the number of terms in a filtration as in that result is bounded in terms of $\dim(W)$ and $p$, we may in fact assume that $U$ is either $\R_{K^{1/p}/K}(\alpha_p)$ or $\mathscr{V}$ (even though technically the former is not wound, being nonsmooth). In the former case, $U$ is totally nonsmooth, so the only map from a geometrically reduced $K$-scheme into $U$ is the $0$ map. So assume that $U = \mathscr{V}$, and we must prove the existence of a suitable $C = C(p,r, n_1,\dots,n_r)$.

We may assume that all $n_i$ are as small as possible -- that is, if $n_i > 0$, then $\lambda_i \notin K^p$. If $n_i = 0$, then $X_1 \simeq \A^1$, so any map $\prod X_i \rightarrow U$ which vanishes on $\infty \times \prod_{i>1} X_i$ must be constant because $U$ is wound. Thus we may assume that $\lambda_1 \notin K^p$ and $n_1 > 0$. By \cite[Lem.\,5.3]{rosrigidity}, there is a totally nonsmooth (over $K$) $K$-subgroup $N \subset M := \calMor((\P^1\backslash \lambda_1^{1/p^{n_1}}, \infty), (\mathscr{V},0))^+$ such that $(M_{K(\lambda_1^{1/p^{\infty}})})_s \subset N_{K(\lambda_1^{1/p^{\infty}})}$, where, for a unipotent $L$-group $V$, $V_s$ denotes the maximal split $L$-subgroup of $V$.

Now a map $\prod_i X_i \rightarrow U$ which vanishes whenever any coordinate is $\infty$ is the same thing as such a map $g\colon \prod_{i>1} X_i \rightarrow M$, and we will show that the number of such maps is finite and bounded depending only on the $n_i$, $r$, and $p$. Let $L := K(\lambda_1^{1/p^{\infty}})$, a field of degree of imperfection $r-1$. Any map $g$ as above induces a map $\overline{g}\colon \prod_{i>1} X_i \rightarrow M_L/(M_L)_s$ which vanishes whenever some coordinate is $\infty$. By induction, there are at most $C' = C'(d', r-1, n_2, \dots, n_r)$ such maps, where $d'$ is the degree of an equation describing $M_L/(M_L)_s$. Further, we claim that if $\overline{g} = 0$, then $g = 0$. For if $\overline{g} = 0$, then $g$ lands in $N$, since $N$ contains $(M_L)_s$. Because $N$ is totally nonsmooth over $K$ and $\prod_{i>1} X_i$ is smooth, we must have $g = 0$, as claimed.

It only remains to show that $d'$ is bounded in terms only of $n_1, r, p$. For this we argue as follows. Complete $\lambda_1$ to a $p$-basis $\mu_1 := \lambda_1, \mu_2, \dots, \mu_r$ of $K$, so in particular the $\mu_i$ are $p$-independent. Consider the subfield $F := \F_p(\mu_1, \dots, \mu_r) \subset K$. Then $K/F$ is a separable extension, hence the formation of $M = \calMor_F((\P^1\backslash \mu_1^{1/p^{n_1}}, \infty), (\mathscr{V},0))^+$ commutes with scalar extension to $K$, where by abuse of notation, $\mathscr{V}$ is an $F$-descent of $\mathscr{V}/K$ (which exists because $\mathscr{V}$ may be defined via an equation using the $p$-basis $\mu_1, \dots, \mu_r$ of $F$). Furthermore, if we let $E := F(\lambda_1^{1/p^{\infty}})$, then $L/E$ is separable, hence $(M_L)_s = (M_E)_s \times_E L$. Thus we have reduced the calculation of $M_L/(M_L)_s$ to the ``universal'' case over the single field $F$ with fixed $p$-basis $\mu_1, \dots, \mu_r$. Thus the $d'$ that works over this field also works in general, and has no dependence on the chosen $\lambda_i \in K$. The proof of the lemma is complete.
\end{proof}

\begin{proposition}
\label{finmapsprodr}
Let $K$ be a field of degree of imperfection $r < \infty$, let $(X_1, x_1), \dots, (X_r, x_r)$ be pointed unirational $K$-schemes with $x_i \in X_i(K)$, and let $U$ be a wound unipotent $K$-group. Then there are only finitely many maps $f\colon\prod_{i=1}^r X_i \rightarrow U$ such that $f$ vanishes whenever any one of the coordinates is set to $x_i$.
\end{proposition}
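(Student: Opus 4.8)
Here is how I would attempt the proof.

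The plan is to mirror the proof of Lemma \ref{finmultaddmaps}, which, after the dictionary \cite[Th.\,6.7]{rosmodulispaces} between multi-additive maps out of the groups $\R_{D_i/K}(\Gm)$ and maps out of products of rational curves $\P^1 \setminus \mathrm{supp}(D_i)$ vanishing whenever a coordinate is sent to $\infty$, is exactly the present statement in the special case where each $X_i$ is such a curve. First I would reduce to $K = K_s$: a map defined over $K$ is one defined over $K_s$, and the degree of imperfection, the unirationality of the $X_i$, and the woundness of $U$ are all unchanged by separable extension. I would then induct on $r$, the case $r = 0$ being immediate since a wound unipotent group over a perfect field is trivial. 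For $r \geq 1$, I would invoke the ubiquity and rigidity of permawound groups (\cite[Th.\,1.4, Th.\,1.5]{rospermawound}) exactly as in Lemma \ref{finmultaddmaps}: embedding $U$ into a permawound group $W$ and bounding the maps into $U$ by those into $W$ and thence into the graded pieces of a permawound filtration, one reduces to the case where $U$ is either $\R_{K^{1/p}/K}(\alpha_p)$ or the group $\mathscr{V}$ of \cite[Def.\,7.3]{rospermawound}. The former is totally nonsmooth, so admits only the trivial such map from a geometrically reduced source.

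For $U = \mathscr{V}$ I would curry the first coordinate. By \cite[Th.\,4.3]{rosmodulispaces} the restricted moduli space $M := \calMor((X_1, x_1), (\mathscr{V}, 0))^+$ is a smooth unipotent $K$-group, and it is semiwound: a homomorphism $\Ga \to M$ is the same as a map $\Ga \times X_1 \to \mathscr{V}$, pointed in $X_1$ and additive in the $\Ga$-variable, whose restriction to each slice $\{x\} \times \Ga$ is a homomorphism $\Ga \to \mathscr{V}$ and hence trivial, $\mathscr{V}$ being wound. Maps $\prod_{i=1}^r X_i \to \mathscr{V}$ vanishing whenever a coordinate is a base point then correspond to such maps $g \colon \prod_{i=2}^r X_i \to M$. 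Fixing $\mu$ in a $p$-basis of $K$ and setting $L := K(\mu^{1/p^\infty})$, a field of degree of imperfection $r - 1$, I would use \cite[Lem.\,5.3]{rosrigidity} to obtain a totally nonsmooth $K$-subgroup $N \subseteq M$ with $(M_L)_s \subseteq N_L$. Each $g$ induces $\overline{g} \colon \prod_{i=2}^r (X_i)_L \to M_L/(M_L)_s$, a map of the same type into a wound group over $L$; since unirational schemes are geometrically integral, the $(X_i)_L$ remain unirational, so the inductive hypothesis bounds the number of $\overline{g}$. Finally, if $\overline{g} = 0$ then $g_L$ factors through $(M_L)_s \subseteq N_L$, so $g$ factors through $N$ by faithfully flat descent along $L/K$ and vanishes, $N$ being totally nonsmooth and the source geometrically reduced. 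Hence $g \mapsto \overline{g}$ is injective and the number of maps is finite.

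The principal difficulty is to carry out the step involving \cite[Lem.\,5.3]{rosrigidity} for a general unirational source rather than a curve. In Lemma \ref{finmultaddmaps} the extension $L$ was generated by all $p$-power roots of the single element $\lambda_1$ cutting out the point of $\P^1$ removed to form $X_1$, and the control of the split subgroup of the moduli space over $L$ was tied to that geometry; here there is no distinguished inseparable direction attached to $X_1$, and one must instead verify that, for $M = \calMor((X_1, x_1), (\mathscr{V}, 0))^+$ with $X_1$ an arbitrary unirational scheme and $\mu$ an arbitrary $p$-basis element, the maximal split subgroup of $M_L$ is still captured by a totally nonsmooth $K$-subgroup. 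Securing this, together with the semiwoundness of $M$ that makes the rigidity lemma applicable, is the crux; the permawound reduction of $U$, the descent of the factorization through $N$, and the persistence of unirationality under the purely inseparable base change to $L$ are then comparatively routine.
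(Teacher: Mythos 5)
Your reduction of the special case (each $X_i$ an open subscheme of $\P^1$) to Lemma \ref{finmultaddmaps} via \cite[Th.\,6.7]{rosmodulispaces} matches the paper. But for the general case your plan is to rerun the inductive moduli-space argument of Lemma \ref{finmultaddmaps} with an arbitrary unirational $X_1$ in place of a punctured line, and the step you yourself flag as the ``principal difficulty'' is a genuine gap, not a technicality: the application of \cite[Lem.\,5.3]{rosrigidity} to produce a totally nonsmooth $K$-subgroup $N \subset M = \calMor((X_1,x_1),(\mathscr{V},0))^+$ capturing $(M_L)_s$ depends essentially on $L$ being generated by $p$-power roots of the single element cutting out the puncture of $X_1 = \P^1\setminus\{\lambda_1^{1/p^{n_1}}\}$. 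Even for an open subset of $\P^1$ with several punctures one must first decompose the complementary divisor into primitive pieces $\Spec(K(\alpha^{1/p^m}))$ before this works; for a general unirational $X_1$ there is no distinguished inseparable direction and no such decomposition, and you give no argument (nor is one known to me) that the needed statement about $(M_L)_s$ holds for an arbitrary choice of $p$-basis element $\mu$. An unresolved crux is an incomplete proof.

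The paper sidesteps this entirely by a different mechanism. It first upgrades the special case to a \emph{uniform} bound: the number of admissible maps $\prod X_i \to U$ with $X_i \subset \P^1$ open is bounded by a constant depending only on $U$ and the degrees of the complementary divisors (this is what Lemma \ref{finmultaddmaps} actually delivers). It then uses unirationality to choose dominant maps $g_i\colon Y_i \to X_i$ with $Y_i \subset \P^{m_i}$ open, normalizes base points via the $\Delta_{f,x_1,\dots,x_r}$ construction of \cite[\S7]{rosrigidity}, and slices $\prod Y_i$ by products of lines $\ell$ through chosen rational points. Each slice $Y_\ell$ is a product of opens in $\P^1$ with complementary divisors of controlled degree, so given more than $C$ maps, two must agree on each $Y_\ell$; Zariski density of the slices and of the base points then forces two of the original maps to coincide. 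You should either adopt this slicing reduction, or supply a proof of the missing semiwound/split-subgroup control for moduli spaces of maps out of general unirational schemes — as written, your argument does not close.
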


\begin{proof}
We first treat the special case in which the $X_i$ are all open subschemes of $\P^1_K$, in which case we prove a somewhat stronger result -- namely, that the number of maps $f$ as in the proposition is bounded in terms only of $U$ and the degrees of the complementary divisors $\P^1\backslash X_i$. As in the proof of Lemma \ref{finmultaddmaps} above, we may invoke \cite[Th.\,6.7]{rosmodulispaces} to conclude that it is sufficient to show that, for any reduced divisors $D_1, \dots, D_r$ on $\P^1$, the number of multi-additive maps $\prod_{i=1}^r \R_{D_i/K}(\Gm) \rightarrow U$ is finite and bounded in terms only of $U$ and the degrees of the $D_i$, and this is the content of Lemma \ref{finmultaddmaps}(ii).

Now consider the general case. For technical reasons, we shall find it more convenient to prove a slightly modified version of the proposition. We invoke certain notation from the beginning of \cite[\S7]{rosrigidity}. Namely, we have associated to $f$ a map $\Delta_{f,x_1,\dots,x_r}\colon \prod_{i=1}^r X_i \rightarrow U,$ which enjoys two key properties: (1) $\Delta_{f,x_1,\dots,x_r} = 1$ whenever any coordinate is set to $x_i$, and (2) if $f$ vanishes whenever any coordinate is set to $x_i$, then $\Delta_{f,x_1,\dots,x_r} = f$. Further, the construction of $\Delta_{f,x_1,\dots,x_r}$ is functorial in the obvious sense.

Choose dominant maps $g_i\colon Y_i \rightarrow X_i$ with $Y_i \subset \P^{m_i}$ nonempty open subschemes, and let $d_i$ denote the degree of the complementary divisor of $Y_i$. Let $C$ be such that there are at most $C$ maps as in the proposition whenever all of the $X_i$ are open subsets of $\P^1$ with complementary divisors whose degrees sum to $\sum_i d_i$. Given a set $S$ of $> C$ maps $f\colon \prod_i X_i \rightarrow U$ such that $f$ vanishes whenever some coordinate is set to $x_i$, we will show that two of the $f \in S$ must coincide. For each $f$, let $g_f := f\circ \prod_i g_i\colon \prod Y_i \rightarrow U$. For each $r$-tuple of points $y_i \in Y_i(K)$, consider the map $\Delta_{g_f,y_1,\dots,y_r}\colon \prod Y_i \rightarrow U$, which vanishes whenever some coordinate is set to $y_i$. For each $i$, let $L_i$ denote the space of lines on $\P^{m_i}$ through $y_i$. We may assume that $K$ is infinite (else $r = 0$ and the proposition is trivial). Then for a Zariski dense set of points $\ell \in (\prod L_i)(K)$, one has that $Y_{\ell} := \ell \cap \prod Y_i$ is a product of open subschemes of $\P^1$ with complementary divisor on the $i$th coordinate of degree $d_i$. It follows from our choice of $C$ that two of the $\Delta_{g_f,y_1,\dots,y_r}$ must coincide when restricted to $Y_{\ell}$. Because the $Y_{\ell}$ are Zariski dense in $\prod Y_i$, we must therefore have that two of the $\Delta_{g_f,y_1,\dots,y_r}$ coincide.

Thus we have shown that for each $y \in \prod Y_i(K)$, there are two $f \in S$ such that $\Delta_{g_f,y}$ coincide. It follows that there exist $f_1, f_2 \in S$ such that $\Delta_{g_{f_1},y} = \Delta_{g_{f_2},y}$ for a Zariski dense set of $y \in \prod Y_i(K)$. Because $\Delta_{g_f,y_1,\dots,y_r} = \Delta_{f,g(y_1),\dots,g(y_r)}\circ g$, and $g$ is dominant, it follows that $\Delta_{f_1,f_1(y_1),\dots,f_1(y_r)} = \Delta_{f_2,f_2(y_1),\dots,f_2(y_r)}$ for a Zariski dense set of $\prod y_i \in \prod Y_i(K)$. Thus one has that $\Delta_{f_1,x} = \Delta_{f_2,x}$ for a Zariski dense set of $x \in \prod X_i(K)$, hence for all $x \in \prod X_i(K)$. In particular, this holds for $x = (x_1, \dots, x_r)$. Because, for all $f \in S$, $f$ vanishes whenever some coordinate is set to $x_i$, it follows that $f_1 = f_2$. This proves the proposition.
\end{proof}

\begin{proposition}
\label{imageprodunirpmwd}
Let $K$ be a field of degree of imperfection $r < \infty$, let $(X_1, x_1), \dots, (X_r, x_r)$ be pointed unirational $K$-schemes, and let $U$ be a wound unipotent $K$-group. Finally, let $f\colon \prod_{i=1}^r X_i \rightarrow U$ be such that $f$ vanishes whenever any one of the coordinates is set to $x_i$. Then $f$ generates a permawound $K$-subgroup of $U$.
\end{proposition}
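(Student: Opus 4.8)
The plan is to combine the finiteness criterion for permawoundness (Proposition \ref{permawdfinite}) with the finiteness of pointed maps out of a product of unirational schemes (Proposition \ref{finmapsprodr}). First I would let $G \subseteq U$ denote the $K$-subgroup generated by the image of $f$. Since $\prod_{i=1}^r X_i$ is geometrically connected and geometrically reduced (being unirational) and $f$ carries $(x_1,\dots,x_r)$ to the identity, the Zariski closure $\overline{f(\prod_i X_i)}$ is an irreducible $K$-subvariety of $U$ through $1$; by the standard theory of subgroups generated by irreducible subvarieties through the identity, $G$ is a smooth connected closed $K$-subgroup. As a subgroup of the unipotent group $U$ it is unipotent, so $G$ is a smooth connected unipotent $K$-group and Proposition \ref{permawdfinite} is applicable to it.

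By that proposition, it suffices to prove that $\Hom_{K_s}(G, V)$ is finite for every commutative $p$-torsion wound unipotent $K$-group $V$. I would fix such a $V$ and work over $K_s$, noting that $K_s$ still has degree of imperfection $r$, that $V_{K_s}$ remains wound unipotent, that each $X_{i,K_s}$ remains pointed unirational (with $x_i \in X_i(K) \subseteq X_i(K_s)$), and that $G_{K_s}$ is generated by the image of $f_{K_s}$. The crux is the observation that a $K_s$-homomorphism $\psi\colon G_{K_s} \to V_{K_s}$ is completely determined by the composite $\psi\circ f_{K_s}\colon \prod_i X_{i,K_s} \to V_{K_s}$: since $G_{K_s}$ is generated by the image of $f_{K_s}$, two homomorphisms agreeing on that image agree everywhere. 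Thus $\psi \mapsto \psi \circ f_{K_s}$ is an injection of $\Hom_{K_s}(G_{K_s}, V_{K_s})$ into the set of $K_s$-morphisms $\prod_i X_{i,K_s} \to V_{K_s}$ that vanish whenever some coordinate is set to $x_i$ (the composite vanishes there because $f$ does and $\psi(1)=1$).

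Now Proposition \ref{finmapsprodr}, applied over $K_s$ with the wound unipotent group $V_{K_s}$ in place of the group denoted $U$ there, says precisely that this latter set of maps is finite. Hence $\Hom_{K_s}(G, V) = \Hom_{K_s}(G_{K_s}, V_{K_s})$ is finite for every $V$, and Proposition \ref{permawdfinite} then yields that $G$ is permawound, as desired.

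The substantive content is entirely front-loaded into the two cited propositions, so the proof is essentially a formal reduction. The main points to verify carefully are that $G$ is genuinely smooth and connected (so that the permawoundness criterion applies) and that homomorphisms out of $G$ are pinned down by their restriction to the generating image of $f$. I expect the only mild subtlety to be checking that the formation of the generated subgroup commutes with the separable base change to $K_s$, so that $G_{K_s}$ is indeed generated by $\im f_{K_s}$; this is standard but worth stating explicitly, since the finiteness input from Proposition \ref{finmapsprodr} is most naturally invoked over $K_s$.
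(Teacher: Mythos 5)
Your argument is correct and follows essentially the same route as the paper's own proof: replace $U$ by the subgroup generated by $f$, invoke Proposition \ref{permawdfinite} to reduce permawoundness to finiteness of $\Hom_{K_s}(-,V)$, and obtain that finiteness by injecting homomorphisms into the set of maps $\prod_i X_i \to V$ vanishing on the coordinate slices, which Proposition \ref{finmapsprodr} bounds. The paper phrases this as a contradiction (infinitely many homomorphisms would yield infinitely many such maps) while you argue the contrapositive directly, and you spell out the smoothness and connectedness of the generated subgroup and the base-change bookkeeping that the paper leaves implicit; these are presentational differences only.
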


\begin{proof}
We may assume that $K$ is separably closed. We may additionally replace $U$ by the subgroup generated by $f$ and thereby assume that $f$ generates $U$. If $U$ is not permawound, then by Proposition \ref{permawdfinite} there is a wound unipotent $K$-group $V$ such that $\Hom(U, V)$ is infinite. But by postcomposing $f$ with these homomorphisms, we would thereby obtain a violation of Proposition \ref{finmapsprodr}.
\end{proof}

We are now prepared to prove the main result of this section. Let $G$ be a smooth connected $K$-group scheme. Recall that the lower central series $\mathscr{D}_n(G)$ of $G$ is defined by the formula $\mathscr{D}_1(G) := G$, $\mathscr{D}_{n+1}(G) := [G, \mathscr{D}_n(G)]$ for $n \geq 1$. This group must eventually stabilize, and we by define $\mathscr{D}_{\infty}(G)$ to be the stabilization.

\begin{theorem}
\label{unifrmquot}
Let $K$ be a field of degree of imperfection $r > 1$. Then a smooth connected unipotent $K$-group $U$ is unirational if and only if $U/\mathscr{D}_rU$ is.
\end{theorem}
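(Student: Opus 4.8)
The forward implication is immediate: $U/\mathscr{D}_rU$ is a quotient of $U$, and unirationality descends to quotients. The content is the converse, so assume $U/\mathscr{D}_rU$ is unirational and let us deduce the same for $U$. I may assume throughout that $K$ is separably closed, since unirationality is insensitive to separable extension \cite[Th.\,1.6]{rosrigidity}, and I assume $r \geq 2$; the case $r = 1$ reduces to Corollary \ref{uniriffabis}, the operative hypothesis there being unirationality of $U^{\ab}$. Since $r \geq 2$ gives $\mathscr{D}_rU \subseteq \mathscr{D}_2U$, the abelianization $U^{\ab} = U/\mathscr{D}_2U$ is a quotient of $U/\mathscr{D}_rU$ and is therefore unirational; this is the extra input the argument will exploit. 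The plan is to prove that $\mathscr{D}_rU$ is \emph{permawound}. Granting this, $U$ sits in an extension $1 \to \mathscr{D}_rU \to U \to U/\mathscr{D}_rU \to 1$ of the unirational group $U/\mathscr{D}_rU$ by the permawound group $\mathscr{D}_rU$, so Proposition \ref{extunirgpspmwdunir} (applicable as $K$ is imperfect) shows that $U$ is unirational, finishing the proof.

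To show $\mathscr{D}_rU$ is permawound I filter it by the lower central series. Because $U$ is nilpotent, $\mathscr{D}_rU$ is a finite iterated extension of the successive quotients $V_i := \mathscr{D}_iU/\mathscr{D}_{i+1}U$ for $i \geq r$, each of which is commutative, being central in $U/\mathscr{D}_{i+1}U$. By Proposition \ref{permawdexts} it thus suffices to prove that each $V_i$ is permawound. The associated graded $\mathrm{gr}(U) = \bigoplus_{i\geq 1} V_i$ is a Lie algebra generated in degree one, so for each $i \geq r$ the group $V_i$ is generated, as an algebraic group, by the images of the iterated commutator morphisms $\mu\colon (U^{\ab})^{\times i} \to V_i$. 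These are genuine morphisms of $K$-schemes; they factor through $U^{\ab}$ in each slot modulo $\mathscr{D}_{i+1}U$; and, being built from commutators, they vanish whenever any argument is set to the identity.

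Here is the crux, and the reason the theorem features precisely the $r$th term of the lower central series. Fix $i \geq r$. For each choice of rational points $\vec c \in U^{\ab}(K)^{i-r}$ in all but $r$ of the slots, restricting $\mu$ in the remaining $r$ slots yields a morphism $f_{\vec c}\colon (U^{\ab})^{\times r} \to V_i$ which vanishes whenever any one of its $r$ coordinates is the identity. Composing with the projection to the maximal wound quotient $(V_i)_w$ and using that $U^{\ab}$ is a pointed unirational $K$-scheme, Proposition \ref{imageprodunirpmwd} — whose number of factors is exactly the degree of imperfection $r$ — shows that $f_{\vec c}$ generates a permawound subgroup of $(V_i)_w$. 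As $\vec c$ ranges over $U^{\ab}(K)^{i-r}$, the images of the $f_{\vec c}$ cover the image of $\mu$, which generates $V_i$; by Noetherianity finitely many of these permawound subgroups already generate $(V_i)_w$. Since a group generated by finitely many permawound subgroups is a quotient of their product, hence permawound (Proposition \ref{permawdexts}), the group $(V_i)_w$ is permawound; as the kernel $(V_i)_s$ is split and hence permawound \cite[Prop.\,5.3]{rospermawound}, $V_i$ is permawound by Proposition \ref{permawdexts}. This yields permawoundness of every $V_i$ for $i \geq r$, hence of $\mathscr{D}_rU$, as desired.

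I expect the main obstacle to lie in the crux paragraph: checking cleanly that the iterated commutator descends to a morphism out of $(U^{\ab})^{\times i}$, that freezing $i-r$ coordinates produces legitimate inputs for Proposition \ref{imageprodunirpmwd} (in particular the vanishing-at-basepoints hypothesis, uniformly in $\vec c$), and that the resulting permawound subgroups assemble — through the wound-quotient reduction and a Noetherian finiteness step — into permawoundness of the entire layer $V_i$. The conceptual heart is that one can afford exactly $r$ freely varying unirational arguments precisely because $K$ has degree of imperfection $r$; this matching is what forces the appearance of $\mathscr{D}_rU$ rather than a higher or lower term.
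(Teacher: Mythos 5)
Your argument is correct in substance for $r \geq 2$ and reaches the conclusion by a genuinely different route from the paper, though it rests on the same two pillars, Propositions \ref{imageprodunirpmwd} and \ref{extunirgpspmwdunir}. The paper runs a dimension induction: it chooses a nontrivial smooth connected central $U' \subset U$, descends the single $r$-fold commutator map to $(U/U')^r \rightarrow U$ (with $U/U'$ unirational by induction), extracts from Proposition \ref{imageprodunirpmwd} one nontrivial normal permawound subgroup $V$, and concludes by applying induction to $U/V$ together with Proposition \ref{extunirgpspmwdunir}. You instead prove the stronger assertion that $\mathscr{D}_rU$ itself is permawound, by filtering it through the lower central series and applying Proposition \ref{imageprodunirpmwd} on each graded layer $V_i = \mathscr{D}_iU/\mathscr{D}_{i+1}U$ with the $r$ free slots filled by $U^{\ab}$ (unirational since $r \geq 2$ makes it a quotient of $U/\mathscr{D}_rU$). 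What your route buys is a cleaner structural output --- permawoundness of all of $\mathscr{D}_rU$, not merely the existence of some filtration --- and the freezing of $i-r$ slots correctly explains why exactly $r$ free unirational arguments are available. What it costs is the unproved input that each $V_i$ is generated, as a $K$-group scheme, by the image of the $i$-fold commutator morphism $(U^{\ab})^{\times i} \rightarrow V_i$: this is the scheme-theoretic version of ``the associated graded Lie ring is generated in degree one,'' which is true but requires the well-definedness and Zariski-density arguments over $K_s$ of the kind the paper carries out in the proofs of Propositions \ref{permawdexts} and \ref{extunirunipptorcom}; the paper's dimension induction is arranged precisely so that only the single $r$-fold commutator map is ever needed. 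Two smaller points. Your passage from finitely many permawound generating subgroups to a quotient of their product is valid only because $V_i$ is commutative --- as a general statement about groups it would be false --- so that dependence should be made explicit. And your dispatch of $r=1$ via Corollary \ref{uniriffabis} misstates the hypothesis: with the paper's indexing $\mathscr{D}_1U = U$, so the hypothesis of the theorem at $r=1$ is vacuous rather than unirationality of $U^{\ab}$; this degenerate case is equally awkward in the paper's own formulation and proof, so it is not a defect peculiar to your argument, but it does not follow from Corollary \ref{uniriffabis} as you claim.
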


For the corresponding result when $r = 1$, see Corollary \ref{uniriffabis}.

\begin{proof}
Because $\mathscr{D}_{\infty}(U) = 1$, we may assume that $r < \infty$. The only if direction is clear, so we prove the converse. If $U_w$ denotes the maximal wound quotient of $U$ (that is, the quotient of $U$ by its maximal split $K$-subgroup), then $U$ is unirational if and only if $U_w$ is, so we may assume that $U$ is wound. We may of course also assume that $U \neq 1$. Thus $U$ contains a nontrivial smooth connected central $K$-subgroup $U'$. Consider the $r$-fold commutator map $U^r \rightarrow U$, $(u_1, \dots, u_r) \mapsto [u_1, [u_2, \dots, u_r]\dots]$. This descends to a map $c\colon\overline{U}^r \rightarrow U$, where $\overline{U} := U/U'$. (Here we use $r > 1$.) Because $U/\mathscr{D}_rU \twoheadrightarrow \overline{U}/\mathscr{D}_r(\overline{U})$, $\overline{U}$ is unirational by dimension induction. If $c$ is constant, then $\mathscr{D}_r(U) = 1$ and the theorem is trivially true. Otherwise, Proposition \ref{imageprodunirpmwd} implies that $c$ generates a nontrivial (normal) permawound $K$-subgroup $V$ of $U$. We then finish by combining dimension induction applied to $U/V$ and Proposition \ref{extunirgpspmwdunir}.
\end{proof}

\begin{remark}
If $U$ is a wound unirational $K$-group, and $K$ has degree of imperfection $r$, then $\mathscr{D}_{r+1}(U) = 1$ \cite[Th.\,1.5]{rosrigidity}. (Note that the indexing on the lower central series used in that paper is unfortunately nonstandard, and in particular differs from that used here by $1$.) Therefore the quotient $U/\mathscr{D}_r(U)$ is in that sense not too far from $U$.
\end{remark}

Of particular note in Theorem \ref{unifrmquot} is the case $r = 2$, which says that if $K$ has degree of imperfection $2$, then $U$ is unirational if $U^{\ab}$ is. The same holds for $r \leq 2$, by Corollary \ref{uniriffabis} and because every connected linear algebraic group is unirational when $r = 0$. We shall now give an example to show that this is optimal, by constructing over every field of degree of imperfection $\geq 3$ a wound unipotent group $U$ such that $U^{\ab}$ is $p$-torsion and unirational even though $U$ fails to be unirational.

\begin{example}
\label{Uabunirex}
Let $K$ be a field of degree of imperfection $\geq 3$, and let $\lambda, \mu, \gamma \in K$ be $p$-independent. Recall that we have defined the following $K$-group
\[
V_{1,\gamma} := \left\{-X_{p-1} + \sum_{i=0}^{p-1}\gamma^iX_i^p = 0\right\}
\]
and $V_{1,\lambda}$, $V_{1,\mu}$ similarly. We also define the group
\[
W := \left\{-Z_{p-1,p-1} + \sum_{0 \leq i,j < p} \lambda^i\mu^jZ_{i,j}^p = 0\right\},
\]
where the variables in the $W$ equation are $Z_{i,j}$ for $0 \leq i,j < p$. We have a bi-additive map $$b\colon V_{1,\lambda} \times V_{1,\mu} \rightarrow W$$ defined by $(X_i)_i \times (Y_j)_j \mapsto (Z_{i,j} := X_iY_j)_{i,j}$. That this does indeed land in $W$ is seen by the following calculation: $$Z_{p-1,p-1} = X_{p-1}Y_{p-1} = \left(\sum_{i=0}^{p-1}\lambda^iX_i^p\right)\left(\sum_{j=0}^{p-1}\mu^jY_j^p\right) = \sum_{0\leq i,j < p} \lambda^i\mu^j(X_iY_j)^p = \sum_{0\leq i,j < p} \lambda^i\mu^jZ_{i,j}^p.$$
We claim that $b$ generates $W$. That is, $b$ does not land in any proper $K$-subgroup scheme of $W$, or equivalently, for some $n > 0$ the map $b^n\colon (V_{1,\lambda} \times V_{1,\mu})^n \rightarrow W$ obtained by adding $b$ over the $n$ coordinates is surjective. In fact, since all groups and maps live over the subfield $\F_p(\lambda, \mu)$, it suffices to verify this over this subfield, where it is \cite[Prop.\,9.9]{rosrigidity}. (The multi-additive map given in the proof of that proposition is in this setting exactly $b$.)

Let $$G := V_{1,\lambda} \times V_{1,\mu} \times V_{1,\gamma},$$ and let $X := W \times G$. For any bi-additive map $h\colon G\times G \rightarrow W$, we obtain a group structure $X_h$ on $X$ via $(w_1,g_1)\cdot(w_2,g_2) := (w_1+w_2+h(g_1,g_2), g_1+g_2)$. The projection map $X_h \rightarrow G$ is then a homomorphism with kernel (isomorphic to) $W$, and this extension of $G$ by $W$ is central. Furthermore, one readily verifies that the commutator of $(0, g_1), (0, g_2)$ is $h(g_1,g_2)-h(g_2,g_1) \in W$. We apply these observations to the bi-additive map $h_0\colon G^2 = (V_{1,\lambda} \times V_{1,\mu} \times V_{1,\gamma})^2 \rightarrow W$ defined by $h((v_1,v_2,v_3), (v_1',v_2',v_3')) := b(v_1,v_2')$. Let $U_1 := X_{h_0}$. If we write $g_i := (x_i,y_i,z_i)$, then $[(0,g_1),(0,g_2)] = b(x_1,y_2) - b(x_2,y_1)$. In particular, setting $x_2 = 0$ and using the fact that $b$ generates $W$, we see that $U_1^{\ab} = G$.

Now we define another extension $U_2$ of $G$ by $W$ as follows. By definition, the group $W$ sits in an exact sequence 
\begin{equation}
\label{Uabunirexeqn1}
0 \longrightarrow W \longrightarrow \Ga^{p^2} \xlongrightarrow{F} \Ga \longrightarrow 0,
\end{equation}
where $F((Z_{i,j})_{ij}) := -Z_{p-1,p-1} + \sum_{0 \leq i,j < p} \lambda^i\mu^jZ_{i,j}^p.$ Associated to (\ref{Uabunirexeqn1}) we obtain a connecting map $\delta\colon \Hom(G, \Ga) \rightarrow \Ext^1(G, W)$. Let $\chi\colon G \rightarrow \Ga$ denote the homomorphism $G \rightarrow V_{1,\gamma} \xrightarrow{X_{p-1}} \Ga$, where the first map is projection. Then we define $U_2 := \delta(\chi)$. We claim that $U_2$ is not unirational. Let us grant this claim for the moment, and use it to construct our example of a unipotent $K$-group $U$ such that $U$ is not unirational but $U^{\ab}$ is.

We define $U$ to be the Baer sum (which makes sense -- and defines an abelian group structure -- on the set of isomorphism classes of {\em central} extensions) of the two extensions $U_1$ and $U_2$ of $G$ by $W$. That is, $U := \coker(\Delta\colon W \rightarrow U_1 \times_G U_2)$, where $\Delta$ is the antidiagonal map $w \mapsto (w, -w)$. We first check that $U$ is not unirational. Indeed, the forgetful map $\Ext^1(G, W) \rightarrow {\rm{H}}^1(G, W)$ which recalls only the structure of the extension as a $W$-torsor over $G$ is a homomorphism. Since $U_1$ is by definition trivial as a torsor, it follows that $U \simeq U_2$ as a $W$-torsor over $G$. In particular, they are isomorphic as $K$-schemes. Since the latter is not unirational, therefore, neither is the former. Now a straightforward calculation (most easily carried out by regarding the derived group as an fppf sheaf rather than as an algebraic group) using the fact that $W \subset \mathscr{D}U_1$ shows that $W \subset \mathscr{D}U$, hence $U^{\ab} = G$ is unirational.

It remains to prove that $U_2$ is not unirational. The argument is more or less the same as in Example \ref{extwdbywdnotunirex}. As in that example, it is sufficient to show that there is no nonzero homomorphism (with $n > 0$ and $\alpha \in K - K^p$) $f\colon V_{n,\alpha} \rightarrow V_{1,\gamma}$ such that $X_{p-1} \circ f$ lifts through the map $F$ in (\ref{Uabunirexeqn1}) to a homomorphism $V_{n,\alpha} \rightarrow \Ga^{p^2}$. (See (\ref{eqnforVnalpha}) for the definition of $V_{n,\alpha}$.)
Suppose to the contrary that there were. Then by Lemma \ref{mapVnalphtoVaalph}, we would have $\alpha \in K^p(\gamma)$, and using $S, S_k$ to denote the variables on $V_{n,\alpha}$, we may write 
\begin{equation}
\label{Uabunirexeqn4}
X_{p-1}\circ f = cS, \hspace{.2 in} 0 \neq c \in K.
\end{equation}
Then we have an equation
\begin{equation}
\label{Uabunirexeqn2}
cS = -Z_{p-1,p-1} + \sum_{0 \leq i,j < p} \lambda^i\mu^jZ_{i,j}^p,
\end{equation}
with each $Z_{i,j}$ a $p$-polynomial in $S$ and the $S_k$ of degree $\leq 1$ in $S$. We claim that $\mathrm{deg}(Z_{i,j}) \leq p^{n-1}$. Indeed, using the equation (\ref{eqnforVnalpha}) to eliminate $S^p$, (\ref{Uabunirexeqn2}) becomes an identity of polynomials. If some $Z_{i,j}$ has degree $> p^{n-1}$, then looking at the leading term in some $S_k$ of (\ref{Uabunirexeqn2}) and using the $p$-independence of $\lambda, \mu$ would yield a contradiction. Thus we may write 
\begin{equation}
\label{Uabunirexeqn3}
Z_{i,j} = d_{i,j}S + \sum_{\substack{0 \leq k < p^n\\ k \not\equiv -1\pmod{p}}} d_{i,j,k}S_k^{p^{n-1}} + G_{i,j},
\end{equation}
where $d_{i,j}, d_{i,j,k} \in K$ and $G_{i,j}$ is a $p$-polynomial over $K$ in the $S_k$ (not involving $S$) of degree $< p^{n-1}$. Then using (\ref{eqnforVnalpha}) to eliminate $S^p$, and substituting (\ref{Uabunirexeqn3}) into (\ref{Uabunirexeqn2}) and comparing coefficients of $S_0^{p^n}$ yields $$\alpha^{1-p}\sum_{0 \leq i,j < p} \lambda^i\mu^jd_{i,j}^p = \sum_{0 \leq i,j < p} \lambda^i\mu^jd_{i,j,0}^p.$$ If not all $d_{i,j} = 0$, then it would follow that $\alpha \in K^p(\lambda, \mu)$. But we have that $\alpha \in K^p(\gamma)$, so it would then follow (because $\lambda, \mu, \gamma$ are $p$-independent) that $\alpha \in K^p$, which is false. Thus $d_{i,j} = 0$ for all $i,j$. Combining (\ref{Uabunirexeqn3}) and (\ref{Uabunirexeqn2}) then yields that $c = 0$, in violation of (\ref{Uabunirexeqn4}). This completes the proof that $U_2$ is not unirational.
\end{example}

\end{document}